\pgfplotsset{compat=1.18}
\theoremstyle{plain}
\newtheorem{theorem}{Theorem}[section]
\newtheorem{definition}[theorem]{Definition}
\newtheorem{lemma}[theorem]{Lemma}
\newtheorem{proposition}[theorem]{Proposition}
\newtheorem{remark}[theorem]{Remark}
\newtheorem{algorithm}[theorem]{Algorithm}
\newtheorem{assumption}[theorem]{Assumption}
\numberwithin{equation}{section} 
\DeclareMathOperator*{\esssup}{ess\,sup}
\DeclareMathOperator*{\essinf}{ess\,inf}
\DeclareMathOperator*{\argmin}{arg\,min}
\renewcommand{\geq}{\geqslant}
\renewcommand{\leq}{\leqslant}
\begin{document} 

\title{Error estimates for numerical approximations of a nonlinear gradient flow model}
\author{J\'{e}r\^{o}me Droniou$^{1,2}$}%
\author{Kim-Ngan Le$^2$}%
\author{Huateng Zhu$^2$}%
\address{$^1$IMAG, Univ. Montpellier, CNRS, Montpellier, France }
\address{$^2$School of Mathematics, Monash University, Australia}%

\email{jerome.droniou@cnrs.fr,
ngan.le@monash.edu, 
huateng.zhu@monash.edu}%

\thanks{This work was partially supported by the Australian Government through the Australian Research Council’s Discovery
Projects funding scheme (grant number DP220100937).}

    \begin{abstract}
        We perform numerical analysis of a nonlinear gradient flow, which can be regarded as a parabolic minimal surface problem or a regularised total variation flow, using the gradient discretisation method (GDM).
        GDM is a unified convergence analysis framework that covers conforming and nonconforming numerical methods, for instance, conforming and nonconforming finite element, two-point flux approximation, etc..
        In this paper, a fully discretised implicit scheme of the model is proposed, the existence and uniqueness of the solution to the scheme is proved, the stability and consistency of the scheme are analysed, and error estimates are established. Numerical results based on the conforming and nonconforming $\mathbb{P}^1$ finite elements are also provided.
    \end{abstract}  
\keywords{error estimates, minimal surface, total variation flow, the gradient discretisation method, non-conforming methods}
\maketitle

    \section{Introduction}
Let \(\Omega \subset \mathbb{R}^2\) with Lipschitz boundary \(\partial \Omega\), 
\(\bm{n}\) be the outward normal to \(\Omega\), 
\(T > 0\) be finite, 
and \(\Omega_T := (0,T) \times \Omega\).
Assume \(g:\Omega \rightarrow \mathbb{R}\) and \(u_0: \Omega \rightarrow \mathbb{R}\) are given.
We are interested in the following model:
for \(\lambda \geq 0\) and \(\rho > 0\),
\begin{subequations}\label{model:rtvf}
    \begin{align}
    &\partial_t u = \text{div} \, \bigg( \frac{\nabla u}{\sqrt{\rho^2 + |\nabla u|^2}} \bigg) - \lambda (u - g) &&\text{in \(\Omega_T\),}\label{eq-model:rtvf c0}\\
    &\nabla u \cdot \bm{n} = 0 &&\text{on \((0,T) \times \partial \Omega\),} \label{eq-model:rtvf c1}\\
    &u(0, \cdot) = u_0 &&\text{in \(\Omega\);}\label{eq-model:rtvf c2}
\end{align}\end{subequations} 
which arises formally as a \(L^2\) gradient flow of the energy functional \(\mathcal{J}_{\rho}\)
defined via
\begin{equation}\label{model:rtvf functional}
    \mathcal{J}_{\rho}(u) := \int_{\Omega} \sqrt{\rho^2 + |\nabla u|^2} + \frac{\lambda}{2}|u - g|^2 \, dx.
\end{equation}

% \subsection*{Motivation}
    We now present two applications of the model.
    The first one is from the perspective of geometric flow.
    In the case when \(\rho = 1\) and \(\lambda = 0\), 
    the functional \eqref{model:rtvf functional} computes the area of a surface \(u\),
    and its corresponding stationary model (sometimes referred to as \textit{minimal surface problem})
    \begin{equation*}%\label{model:minimal surface stationary}
        \text{div} \, \bigg( \frac{\nabla u}{\sqrt{1 + |\nabla u|^2}} \bigg) = 0
    \end{equation*}
    is satisfied by surfaces with zero curvature and thus with minimal area. Finite element approximations of the minimal surface problem have been studied in \cite{rannacher1977some,johnson1975error}.
    The model \eqref{model:rtvf} (with \(\rho = 1\) and \(\lambda = 0\)) corresponds to a time-dependent minimal surface problem.
    % it describes the surfaces flow with speed given by their mean curvature.
    This is an analytically well-understood model (see, e.g. Lichnewsky and Temam \cite{lichnewsky1978pseudosolutions} and Gerhardt \cite{gerhardt1980evolutionary}).
    Deckelnick and Dziuk \cite{deckelnick1995convergence} establish error estimates for continuous in time finite element approximations and use \eqref{model:rtvf} to approximate a non-parametric mean curvature flow.

    % Another model that is closely related to \eqref{model:minimal surface stationary} is the following non-parametric mean curvature flow
    % \begin{equation}\label{model:mean curvature flow}
    %     \frac{\partial_t u}{\sqrt{1 + |\nabla u|^2}} =  \text{div} \, \bigg( \frac{\nabla u}{\sqrt{1 + |\nabla u|^2}} \bigg) \quad \text{in \(\Omega_T\).}\\
    % \end{equation}
    % Its continuous in time finite element error estimates are investigated by Deckelnick and Dziuk in \cite{deckelnick1995convergence}, which is established by deforming \eqref{model:mean curvature flow} into \eqref{model:rtvf}.
    % Error estimates for a fully discretised finite element scheme that linearises the nonlinear coefficients, are then carried out in \cite{deckelnick2000error} by Deckelnick and Dziuk.

The second application is to approximate the total variation flow.
The functional \eqref{model:rtvf functional} and the model \eqref{model:rtvf} can serve as a strongly convex regularisation and an approximation to the total variation functional
\begin{equation}\label{model:tvf functional}
    \mathcal{J}(u) := \int_{\Omega} |\nabla u| + \frac{\lambda}{2}|u - g|^2 \, dx,
\end{equation}
and its corresponding \(L^2\) subgradient flow (called the \textit{total variation flow})
\begin{equation}\label{model:tvf}
    \partial_t u = \text{div} \, \bigg( \frac{\nabla u}{ |\nabla u|} \bigg) - \lambda (u - g) \quad \text{in \(\Omega_T\) with conditions \eqref{eq-model:rtvf c1} and \eqref{eq-model:rtvf c2}}.
\end{equation}
The minimisation problem of \eqref{model:tvf functional} and model \eqref{model:tvf} have received considerable attention in the community of image processing in recent decades,
due to its successful application in noise removal, image reconstruction, and segmentation; see, for example, \cite{Rudin1992NonlinearTV, moeller2015learning, wang2008new, unger2008tvseg, Chambolle1997ImageRV,Chan2003OnTR}.

The minimisation problem of \eqref{model:tvf functional} have been extensively studied numerically.
Bartels \cite{bartels2012total} proves that the piecewise linear finite element approximation converges to the exact solution and shows that convergence of piecewise constant approximation cannot be expected in general. 
Subsequent work by Bartels \cite{bartels2015error} yielded an optimal a posteriori error estimate for the finite element approximation leading to a successful adaptive refinement strategy.
A key result was established by Chambolle and Pock \cite{chambolle2020crouzeix} who derived error estimates with a quasi-optimal rate \(\mathcal{O}(h^{1/2})\) using the Crouzeix-Raviart finite element approximation of \eqref{model:tvf functional}, 
a finding later improved by Bartels and Kaltenbach \cite{bartels2022error} to relaxes the required assumption on dual solutions. 
In parallel with the efforts to relax dual solution assumptions,
Bartels, Tovey, and Wassmer \cite{bartels2022singular} devise graded grid approaches using locally refined meshes. These methods achieved improved convergence rates for the approximations despite the occurrence of discontinuities.
We refer to \cite{bartels2012total} for a more thorough review of the minimisation problem.

The well-posedness of \eqref{model:tvf} has been addressed using semigroup theory, see, for example, \cite{andreu2001minimizing, andreu2001dirichlet, Andreu2002SomeQP, bellettini2002total}.
It is however challenging to discretise the model \eqref{model:tvf}, since the model \eqref{model:tvf} demonstrates strong singular phenomenon \cite{Kobayashi1998EquationsWS, giga2010very}; 
that is, once the singularity \(\{|\nabla u| = 0\}\) appears, it would quickly dominate the entire domain.
Bartels, Nochetto, and Salgado \cite{bartels2014discrete} discretise the subgradient flow \eqref{model:tvf} using continuous piecewise linear functions (on conforming regular mesh)
and show the convergence rate \(\mathcal{O}(h^{1/3})\).
In \cite{giga2020new}, a new numerical scheme is proposed for \eqref{model:tvf} that is constrained to a Riemannian manifold; convergence is established without the error estimates.

In the context of image processing,
the model \eqref{model:rtvf} is often referred to as the \textit{regularised total variation flow}.
It is known that the solution of \eqref{model:rtvf} converges to the solution of model \eqref{model:tvf} in the sense of a variational inequality when the regularisation parameter \(\rho\) approaches zero, see \cite{Feng2003AnalysisOT}.
However, the model \eqref{model:tvf} corresponds to the model \eqref{model:rtvf} with \(\rho\) equal to zero, which is outside the scope of problem \eqref{model:rtvf} in the context of numerical experiments,
so it is not surprising to see different behaviours between these two fundamentally different models.
Compare with the total variation flow \eqref{model:tvf}, its regularised version \eqref{model:rtvf} has a more natural variational formulation for classical and modern numerical methods.
The model \eqref{model:rtvf} also provides decent results in image denoising and restoration, see, for instance, \cite{chen2012gradient, chen2000image,Feng2003AnalysisOT, feng2005rate, calder2010image, Chan2003OnTR, handlovivcova1998numerical}.

% \subsection*{Related results.}
Convergence of approximations for \eqref{model:rtvf} has been shown using 
finite element schemes \cite{barrett2008p,feng2009finite,chen2012gradient} and a finite difference scheme \cite{hong2021convergence}.
Due to the low coercivity (or ellipticity) of the diffusion coefficient,
numerical approximations of the model \eqref{model:rtvf} are not covered by general diffusion models like the fast diffusion flow 
and the diffusion flow with Leray-Lion operator,
whose numerical approximations and their convergence have been widely studied;
we refer the reader to \cite{droniou2020gradient, droniou2013gradient} and references therein.

Error estimates for semi-discretised schemes for \eqref{model:rtvf} have been studied in both temporal and spatial cases.
Kunstmann, Li, and Lubich \cite{kunstmann2018runge} show the convergence of an implicit Runge-Kutta time discretisation for a diffusion model (that includes \eqref{model:rtvf}) with an assumption on the exact solution that is \((n+1)\)-differentiable in time
and the convergence rate \(\mathcal{O}(\delta t^{n+1})\).
Li, Ueda, and Zhou \cite{li2020second} establish error estimates for a temporal discretisation scheme of a nonlinear parabolic equation using functions in Sobolev space \(W^{n,2}\) (\(n=0,1,2,3\)), with rate \(\mathcal{O}(\delta t^{2-n/2})\).
Deckelnick and Dziuk \cite[Lemma 5.1]{deckelnick1995convergence} show the convergence of \eqref{model:rtvf}(with \(\lambda = 0\)) for the spatial discretisation, using finite elements, with rates  \(\mathcal{O}(h^2|\ln h|^2)\) and \(\mathcal{O}(h|\ln h|)\) for the approximations of solution and gradient respectively; 
however, this result requires smooth boundary \(\partial \Omega \in C^5\) and initial data \(u_0 \in C^5(\overline{\Omega})\).
% However, this result requires the exact solution belongs to \(L^{\infty}(0,T; W^{2, \infty}(\Omega)) \cap W^{1, \infty}(0,T; H^2(\Omega)) \cap W^{2, \infty}(0,T; L^2(\Omega))\).

However, the error estimates for fully discretised schemes for \eqref{model:rtvf} have only been analysed in the context of conforming finite element.
Feng and Prohl \cite{Feng2003AnalysisOT} establish error estimates for approximations of solution and gradient using conforming P1 finite element approximation, with rates \(\mathcal{O}(h^2)\) and \(\mathcal{O}(h)\) respectively.
Li and Sun \cite{li2014linearized} show the convergence of a mixed implicit conforming finite element scheme (with the diffusion coefficient being explicit) for \eqref{model:rtvf} (with \(\lambda = 0\)) using continuous piecewise degree \(n\) polynomials with rate \(\mathcal{O}(h^{n + 1})\).
Bartels, Diening, and Nochetto \cite{bartels2018unconditional} provide error estimates that control the influence of both the regularization and the semi-implicit discretization.
Akrivis and Li \cite{akrivis2021linearization} linearise an implicit finite element scheme for diffusion equation with smooth flux (that includes \eqref{model:rtvf}) using the Newton's method, and proved the convergence rates for the approximate solution and its gradient are close to \(2\) and \(1\) in mesh size respectively.

% \subsection*{Our contribution}
In this work, we establish error estimates for the model \eqref{model:rtvf} using the gradient discretisation methods (GDM).
GDM is a generic framework for the design and analysis of numerical schemes for diﬀusion models. 
Using three discrete elements (a discrete space, approximations of function and gradient), a generic formulation can be obtained to describe numerical schemes in the framework.
Although we cannot give here an exhausive list of methods in the framework, let us mention a few of them:
the conforming finite element schemes (with or without mass lumping), 
the Crouzeix-Raviart non-conforming finite element (with or without mass lumping), 
the Raviart-Thomas mixed finite elements,
non-conforming virtual element methods,
hybrid mimetic mixed methods (HMM),
mimetic finite difference methods,
and the hybrid high order schemes.
All methods that fit into the framework inherit results that are established using the GDM approach.
For more detailed discussions, we refer to \cite{Droniou2018TheGD}.

    The main result of this paper consists of error estimates for the approximate solution in the \(L^{\infty}(0,T; L^2(\Omega))\)-norm and for the gradient approximation in the \(L^1(0,T;L^1(\Omega))\)-norm.
    Under standard assumptions on the domain and the exact solution, the convergence of the approximations to the exacts holds true for all numerical methods that are in the framework of GDM with the convergence rate 1 with respect to the time step and the mesh size.
    
    In \cite{rannacher1977some,johnson1975error}, a minimal surface projection was introduced using conforming finite element method along with error estimates. 
    In our analysis, we consider a nonlinear modified Riesz projection that is almost identical to (and also includes) the minimal surface projection using GDM, by proving its existence and error estimates.
    The minimal surface projection has been used for error estimates of finite element approximations for geometric flows in several studies, see, for example, \cite{deckelnick1995convergence, deckelnick2000error, deckelnick2006error}.
    Therefore, this projection and our error estimates have the potential in application for nonconforming numerical analysis of geometric problems, such as the minimal surface problem, parabolic minimal surface problem and the non-parametric mean curvature flow.

    It is known that, in the case \(\rho = 0\) (i.e. total variation flow), non-conforming schemes have a better behaviour than conforming schemes; 
    since convergence of conforming finite approximations for \eqref{model:tvf} cannot be expected in general \cite{bartels2012total} and non-conforming finite approximations are known to have quasioptimal convergence rate \(\mathcal{O}(h^{1/2})\), \cite{chambolle2020crouzeix, bartels2022error}. 
    Consequently, it seems natural to also consider non-conforming methods as particularly suitable for the model \eqref{model:rtvf} with small \(\rho > 0\) when it is used to approximate the total variation flow \eqref{model:tvf}.
    Since we use the generic GDM framework, our analysis precisely covers such methods (e.g., non-conforming finite elements) and shows that, in ideal circumstances, these methods converge in \(\mathcal{O}(h)\) (in comparison to \(\mathcal{O}(h^{1/2})\) in \cite{chambolle2020crouzeix, bartels2022error}).
    We however do not, at this stage, establish an error estimate that is robust with respect to \(\rho > 0\), which remains an interesting topic for a future work.

To summarise, the contributions of this paper are twofold: 
\begin{enumerate}
    \item Error estimates for numerical approximations of the model \eqref{model:rtvf} in \(\mathbb{R}^2.\) 
    \item Error estimates of a nonlinear spatial interpolator, in \(\mathbb{R}^2\).
\end{enumerate}
Both of these error estimates are analysed using the generic GDM framework.
To the best of our knowledge, our error estimates are the first for nonconforming numerical approximations of\textbf{} the model \eqref{model:rtvf}.

This paper is structured as follows:
\begin{itemize}
    \item Section \ref{section:notations} introduces the notation for the gradient discretisation method, a gradient scheme, and the main results.
    \item Section \ref{section:interpolation estimate} introduces a non-linear spatial interpolator along with  error estimates. 
    \item Section \ref{section:stability and consistency} establishes the existence and uniqueness of the solution to the gradient scheme, and analyses the stability and consistency of the gradient scheme.
    \item Section \ref{section:main proof} gives a proof for the main theorems.
    \item Section \ref{sec:numerical example} presents numerical examples.
\end{itemize}
A more detailed description of each section is provided at the start of each section.
    \section{Notations and main results}\label{section:notations} 
% \subsection{Function spaces}
For \(1 \leq p \leq \infty,\) \(L^p(\Omega)\) denotes the Lebesgue space and \(W^{s,p}(\Omega)\), with \(s\in \mathbb{R}\), denotes the Sobolev space.
For a Banach space \(X\), let \(L^p(0,T; X)\) be Bochner space of Bochner-measurable functions \(u:(0,T) \rightarrow X\) satisfying \(t \mapsto \|u(t)\|_X \in L^p(0,T)\).
Unless otherwise stated, \(\langle f,g \rangle \) denotes \(\int_{\Omega} fg,\) for any \(f,g\).
Throughout the paper, 
\(\rho > 0\) is fixed, 
and 
\begin{equation}\label{eq:def weighted rho norm}
| \cdot |_{\rho} := \sqrt{\rho^2 + |\cdot|^2} \ ;
\end{equation}
\(\Lambda^{(i)}\) \((i = 1, 2, \ldots)\) are used to denote different constants.

\subsection{Gradient discretisation and gradient scheme} 
Notations for the gradient discretisation method (GDM) are briefly recalled here; refer to the book \cite{Droniou2018TheGD} by Droniou \textit{et al.} for detailed discussions.

GDM starts by selecting a finite number of discrete unknowns \(X_{\mathcal{D}}\) describing the finite dimensional space  where the approximate solution is sought.
To reconstruct functions and vector-valued functions over \(\Omega\) from the discrete unknowns \(X_{\mathcal{D}}\), linear operators \(\Pi_{\mathcal{D}}\) and \(\nabla_{\mathcal{D}}\) are needed.
The set of discrete space and operators, \((X_{\mathcal{D}}, \Pi_{\mathcal{D}}, \nabla_{\mathcal{D}})\),  is called a \textit{(spatial) gradient discretisation} (GD).  
A space-time GD is a combination of a spatial GD and a time partition.
See Definition \ref{def:GD definition}.

A scheme can be obtained by replacing the continuous space and operators by \(X_{\mathcal{D}}\) and \((\Pi_{\mathcal{D}}, \nabla_{\mathcal{D}})\) in the weak formulation \eqref{eq-def:weak form of rtvf}.
This scheme is called a \textit{gradient scheme} (GS). See Algorithm \ref{def:GS}.

Two approximation properties of GD are required to enable error estimate between the exact solution \(\overline{u}\) to \eqref{model:rtvf} and its approximation:
The \textit{GD-consistency} states that continuous functions and their gradient over \(\Omega\) can be approximated by elements in \(X_{\mathcal{D}}\); see Definition \ref{def:GD consistency}.
The \textit{limit-conformity} states that, asymptotically, \(\Pi_{\mathcal{D}}\) and \(\nabla_{\mathcal{D}}\) satisfy a divergence theorem; see Definition \ref{def:GD limit conformity}.
The notion of \textit{space size} is introduced in Definition \ref{def:GD space size} to quantitatively measure the approximation properties of GD.

\begin{definition}\label{def:GD definition}
    A \textit{space-time gradient discretisation} \(\mathcal{D}_T\) for homogeneous Neumann boundary conditions is defined by
    \(
        \mathcal{D}_{T} = (X_{\mathcal{D}}, \Pi_{\mathcal{D}}, \nabla_\mathcal{D}, ({t^{(m)}})_{ m = 0}^M)
    \) 
    where:
\begin{enumerate}[(i)]
    \item the set of discrete unknowns \(X_{\mathcal{D}}\) is a finite dimensional vector space on \(\mathbb{R}\);
    \item the \textit{function reconstruction} \(\Pi_{\mathcal{D}} : X_{\mathcal{D}} \rightarrow L^{\infty}(\Omega)\) is linear;
    \item the \textit{gradient reconstruction} \(\nabla_{\mathcal{D}}: X_{\mathcal{D}} \rightarrow L^{\infty}(\Omega)^2\) is linear and satisfies: 
     for any \(u\in X_{\mathcal{D}}\), \(\nabla_{\mathcal{D}} u   \in \mathbb{P}^N(\mathcal{T})\) is piece-wise polynomial of degree \(N\) on a \textit{quasi-uniform} mesh \(\mathcal{T}\) with mesh size \(h_{\mathcal{T}}\) in then sense of Definition \ref{def:mesh};
    \item there exists \(\bm{1}_{\mathcal{D}} \in X_{\mathcal{D}}\) such that \(\Pi_{\mathcal{D}} \bm{1}_{\mathcal{D}} = 1\) and \(\nabla_{\mathcal{D}} \bm{1}_{\mathcal{D}} = 0\);
    \item the operators \(\nabla_{\mathcal{D}}\) and \(\Pi_{\mathcal{D}}\) are such that the following is a norm on \(X_{\mathcal{D}}\):
     \begin{equation}\label{eq-def:GD definition norm}
        \| v \|_{\mathcal{D}}:=  \bigg| \int_{\Omega} \Pi_{\mathcal{D}} v \bigg|+ \|\nabla_{\mathcal{D}} v\|_{L^1(\Omega)};
    \end{equation}
    \item \(t^{(0)} = 0 < t^{(1)} < \cdots  < t^{(M)} = T\). 
\end{enumerate}
For \(\ell = 1, \ldots, M\), the  \textit{\(\ell\)-th time step} \(\delta t^{(\ell)}\) is the difference of \(t^{(\ell)}\) and \(t^{(\ell - 1)}\), 
the \textit{maximum time-step} \(\delta t_{\mathcal{D}}^{\max}\) is the maximum of all time-steps, 
the \textit{minimum time-step} \(\delta t_{\mathcal{D}}^{\min}\) is the minimum of all time-steps, i.e.
\begin{align*}
    \delta t^{(\ell)} = t^{(\ell)} - t^{(\ell - 1)}, \quad {\delta t}_{\mathcal{D}}^{\max}= \max_{1 \leq \ell \leq M} \delta t^{(\ell)}, \quad {\delta t}_{\mathcal{D}}^{\min}= \min_{1 \leq \ell \leq M} \delta t^{(\ell)}.
\end{align*}
For a family \(u  = (u^{(0)}, u^{(1)}, \ldots, u^{(M)}) \in X_{\mathcal{D}}^{M+1}\), 
we define the piecewise constant-in-time functions \(\Pi_{\mathcal{D}} u : [0,T] \rightarrow L^{\infty}( \Omega)\), \(\nabla_{\mathcal{D}} u: [0,T] \rightarrow L^{\infty}(\Omega)^2\), and \textit{discrete time derivative} \(\delta_{\mathcal{D}} u: [0,T] \rightarrow L^{\infty}(\Omega) \) as:
for all \(m = 1, \ldots, M,\) for any \(t\in (t^{(m-1)}, t^{(m)}]\),
\begin{align*} 
    \Pi_{\mathcal{D}}u(t) := \Pi_{\mathcal{D}} u^{(m)}, \ \nabla_{\mathcal{D}}u(t) := \nabla_{\mathcal{D}} u^{(m)}, \ \delta_{\mathcal{D}} u (t) = \delta_{\mathcal{D}}^{(m)} u  := \frac{\Pi_{\mathcal{D}} u^{(m)}  - \Pi_{\mathcal{D}} u^{(m - 1)}}{\delta t^{(m)}}.
\end{align*}
\end{definition}
\begin{remark}
    The standard notion of GD for homogeneous Neumann boundary conditions is given in \cite[Definition 3.1]{Droniou2018TheGD}, here we extend the definition by including the existence of the element \(\bm{1}_{\mathcal{D}}\), which is always satisfied in practical applications.
\end{remark}
\begin{definition}\label{def:mesh}
    A mesh \(\mathcal{T}\) is \textit{quasi-uniform} if there exists a real number \(\sigma \in (0,1)\), for each cell \(\tau \in \mathcal{T}\), \(\sigma h_{\tau}^2 \leq |\tau|_2 \) and \(\sigma h_{\mathcal{T}} \leq  h_{\tau}\), 
    where \(h_{\tau}\) denotes the diameter of \(\tau\),
    \(|\cdot|_2\) denotes the Lebesgue measure in \(\mathbb{R}^2\) and \(h_{\mathcal{T}} := \max_{\tau} h_{\tau}\).
\end{definition}
\begin{algorithm}[Gradient scheme]\label{def:GS}
    Take \(u^{(0)} \in X_{\mathcal{D}}\), and consider  \((u^{(m)})_{m = 1, \ldots, M} \in X_{\mathcal{D}}^M\), such that, for all \(m = 1, \ldots, M\), for any \(v\in X_{\mathcal{D}}\), 
    \begin{equation}\label{eq-def:GS}\begin{aligned}
             & \int_{\Omega} \Pi_{\mathcal{D}} u^{(m)}(x) \, \Pi_{\mathcal{D}}v(x) \, dx 
                + \delta t^{(m)}  \int_{\Omega} \frac{ \nabla_{\mathcal{D}}u^{(m)}(x)  }{|\nabla_{\mathcal{D}}u^{(m)}(x)|_{\rho}}\cdot \nabla_{\mathcal{D}}v(x) \, dx\\
            & + \delta t^{(m)} \int_{\Omega} \lambda (\Pi_{\mathcal{D}} u^{(m)}(x) - g(x)) \, \Pi_{\mathcal{D}}v(x) \, dx =   \int_{\Omega} \Pi_{\mathcal{D}} u^{(m-1)}(x) \, \Pi_{\mathcal{D}}v(x) \, dx.
    \end{aligned}\end{equation} 
\end{algorithm}
\begin{definition}[{GD-Consistency}]\label{def:GD consistency}
    Define  \(S_{\mathcal{D}}: H^1(\Omega) \rightarrow [0, +\infty)\) by 
    \begin{equation*} %\label{eq-def:GD consistency}
        \forall \varphi \in H^1(\Omega), \quad  S_{\mathcal{D}}(\varphi) := \min_{v\in X_{\mathcal{D}}} \big(\|\Pi_{\mathcal{D}} v - \varphi \|_{L^2(\Omega)} + \|\nabla_{\mathcal{D}} v - \nabla \varphi\|_{L^2(\Omega)}\big).
    \end{equation*}
    % where   
    % \begin{equation*}
    %     \widetilde{S}_{\mathcal{D}}(\varphi, v) =  \|\Pi_{\mathcal{D}} v - \varphi \|_{L^2(\Omega)} + \|\nabla_{\mathcal{D}} v - \nabla \varphi\|_{L^2(\Omega)}.
    % \end{equation*}
\end{definition} 
\begin{definition}[{Limit-conformity}]\label{def:GD limit conformity}
    Define \(W_{\mathcal{D}}: H_{\text{div}}(\Omega) \rightarrow [0, \infty)\) by
    \begin{equation*}%\label{GDM limit conformity 2}
        \forall \bm{\varphi} \in H_{\text{div}}(\Omega):= \left\{ \bm{\varphi} \in L^2(\Omega)^2: \text{div} \, \bm{\varphi}  \in L^2(\Omega)\right\}, 
        \quad
        W_{\mathcal{D}} (\bm{\varphi}) := \max_{v\in X_{\mathcal{D}} \setminus \{0\}} {\displaystyle  \frac{|\widetilde{W}_{\mathcal{D}} (\bm{\varphi},v)|}{\|v\|_{\mathcal{D}}} } , 
    \end{equation*}
    where 
    \begin{align*}
        \widetilde{W}_{\mathcal{D}} (\bm{\varphi}, v) =  \int_{\Omega} \nabla_{\mathcal{D}}v(x) \cdot \bm{\varphi}(x) \, dx 
        + \int_{\Omega} \Pi_{\mathcal{D}}v(x) \, \text{div}\,\bm{\varphi}(x) \, dx.    \end{align*}
\end{definition}
\begin{definition}[{Space size}]\label{def:GD space size}
    Let \(W_s\) be a Banach space that is continuously embedded in \(H^1(\Omega)\).
    Let \(\bm{W}_w\) be a Banach space that satisfies:
    \begin{equation*}
        \forall \bm{\psi} \in \bm{W}_w, \quad  \frac{\bm{\psi}}{|\bm{\psi}|_{\rho}} \in H_{\text{div}}(\Omega).
    \end{equation*} 
    Define  
    \begin{equation*}
        s_{\mathcal{D}}(W_s) := \sup_{\varphi \in  W_s \setminus \{0\}}  \frac{S_{\mathcal{D}} (\varphi)}{\|\varphi\|_{W_s}} 
        \quad \text{and} \quad 
        w_{\mathcal{D}}(\bm{W}_w) := \sup_{\bm{\psi} \in \bm{W}_w \setminus \{0\}} \frac{W_{\mathcal{D}}\big({\bm{\psi}}/{ |\bm{\psi}|_{\rho}}\big)} {\|\bm{\psi}\|_{\bm{W}_w}}.
    \end{equation*}
    The \textit{space size} of \(\mathcal{D}_T\) with respect to \(W_s\) and \(\bm{W}_w\) is defined by 
    \begin{align*}
        h_{\mathcal{D}} (W_s; \bm{W}_w) := \max\left(s_{\mathcal{D}}(W_s), w_{\mathcal{D}}(\bm{W}_w)\right).
    \end{align*}
\end{definition}
\begin{remark}
    For all mesh-based lower order methods in the framework, 
    the space size is bounded above by the mesh size (up to a multiplicative constant that depends on the mesh regularity), provided that \(W_s\) and \(\bm{W}_w\) are regular enough (see \cite[Remark 2.24]{Droniou2018TheGD}).
\end{remark}

\subsection{The main result}
    In the rest of the paper,
    \(a \lesssim b\) denotes \(a \leq C b\) if there exists a positive \(C\) that only depends on \(\overline{u}, \Omega, T, \rho\), \(N\) and \(\sigma\);
     \(a \sim b\) denotes \(b \lesssim a \lesssim b\). 
\begin{definition}\label{def:weak form of rtvf}
    A function \(\overline{u}\) is a \textit{weak solution} of \eqref{model:rtvf} if 
    \( \overline{u}\) belongs to \( L^{1}((0,T);BV(\Omega))\) and \(C^0([0,T]; L^2(\Omega))\), \(\partial_t \overline{u} \in L^2(0,T;H^{1}(\Omega)')\), \(\overline{u}(\cdot, 0) = u_0 (\cdot)\), 
    and for any \(\phi\in L^2((0,T);H^1(\Omega)),\)
    \begin{equation}\label{eq-def:weak form of rtvf} \begin{aligned}
            &  \int_{0}^T  \big\langle\partial_t \overline{u}(t) , \phi(t) \big\rangle_{(H^1)',H^1}\, dt 
            + \int_{0}^{T}\int_{\Omega} \frac{\nabla \overline{u}(x,t) }{|\nabla \overline{u} (x,t)|_{\rho}} \cdot \nabla \phi(x,t) \, dx \, dt \\
            & + \int_0^T \int_{\Omega}  \lambda \overline{u}(x,t) \, \phi(x,t) \, dx \, dt 
            = \int_0^T \int_{\Omega} \lambda g(x) \, \phi(x,t) \, dx \, dt.
    \end{aligned}\end{equation} 
\end{definition}
The existence and uniqueness of a variational solution that is equivalent to Definition \ref{def:weak form of rtvf} has been addressed by Feng and Prohl \cite{Feng2003AnalysisOT} using the compactness method.
\begin{assumption}\label{def:GDM Error Estimate Theorem assumption}
    Let \(\mathcal{D}_T\) be a space-time GD for homogeneous Neumann boundary conditions, in the sense of Definition \ref{def:GD definition}
    and let \(W_s\) and \(\bm{W}_w\) be as in Definition \ref{def:GD space size}.
    Let \(h_{\mathcal{D}} := \max\{h_{\mathcal{D}} (W_s; \bm{W}_w), h_{\mathcal{D}}(H^2(\Omega), H^1(\Omega)^2)\}\) be given by Definition \ref{def:GD space size}.
    Let \(P_{\mathcal{D}}\) be the non-linear spatial interpolator in the sense of Lemma \ref{lem:existence of NL interpolator} below.
    Denote 
    \begin{equation}\label{def:eDini}
        e_{\mathcal{D}}^{ini}  := \| \Pi_{\mathcal{D}} u^{(0)} - \Pi_{\mathcal{D}} P_{\mathcal{D}} u_0 \|_{L^2(\Omega)}.
    \end{equation}
    Assume the following:
    \begin{subequations}  
        \begin{equation}\tag{H1}%\label{eq-def:GDM Error Estimate Theorem assumption doamin}
            \text{The domain \(\Omega \subset \mathbb{R}^2\) is convex, bounded, and polygonal.}
        \end{equation}
        \begin{equation}\tag{H2}
            \text{\(u_0 \in W^{2, \infty}(\Omega) \cap W_s.\)}
        \end{equation}
        \begin{equation}\tag{H3}%\label{eq-def:GDM Error Estimate Theorem assumption 1}
        \begin{aligned}
                & \text{The exact solution } \overline{u} \text{ of \eqref{model:rtvf}}  \text{ is Lipschitz-continuous } [0,T] \rightarrow H^2(\Omega) \cap W_s,\\
                & \text{and belongs to \( H^2(0,T; L^2(\Omega)) \cap L^{\infty}(0,T; W^{2, \infty}(\Omega))\).}
        \end{aligned}\end{equation}
        \begin{equation}\tag{H4}
            \text{\(\nabla \overline{u}\) is Lipschitz-continuous \([0,T] \rightarrow L^{\infty}(\Omega) \cap \bm{W}_w\).}
        \end{equation}
        \begin{equation}\tag{H5}\label{eq-def:GDM Error Estimate Theorem assumption 2} 
             h_{\mathcal{D}}  \lesssim h_{\mathcal{T}}.
        \end{equation}
    \end{subequations}
\end{assumption}

Now, we have enough ingredients to state the main theorems.
The first theorem concerns the error estimates with respect to the time step and the space size, in the \(L^1\)-norm on the gradient.
\begin{theorem}\label{thm:GDM Error Estimate Theorem}
    Suppose that Assumption \ref{def:GDM Error Estimate Theorem assumption} holds.
    There exists a unique solution \(u = (u^{(\ell)}: \ell = 1, \ldots, M) \in X_{\mathcal{D}}^{M}\) to the gradient scheme  \eqref{eq-def:GS};  
    this solution satisfies the following inequality:
    \begin{equation}\label{eq:GDM Error Estimate Theorem 1}
             \max_{1 \leq \ell \leq M} \| \Pi_{\mathcal{D}} u^{(\ell)} -   \overline{u}(t^{(\ell)}) \|_{L^2(\Omega)} + \| \nabla_{\mathcal{D}} u - \nabla \overline{u} \|_{L^1(0,T; L^1(\Omega))} \lesssim e_{\mathcal{D}}^{\text{ini}} + (e_{\mathcal{D}}^{\text{ini}})^2 + \delta t_{\mathcal{D}}^{\max} + h_{\mathcal{D}}.
    \end{equation}  
    
\end{theorem}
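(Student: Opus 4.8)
The plan is to prove the two assertions separately: first the well-posedness of \eqref{eq-def:GS}, then the error estimate \eqref{eq:GDM Error Estimate Theorem 1}.

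\textbf{Well-posedness.} The nonlinearity in \eqref{eq-def:GS} is the $\xi$-gradient of the convex potential $\phi(\xi)=\sqrt{\epsilon^2+|\xi|^2}$, since $\nabla_\xi\phi(\xi)=\xi/\sqrt{\epsilon^2+|\xi|^2}$. Hence \eqref{eq-def:GS} at step $m$ is precisely the Euler--Lagrange equation for
\begin{equation*}
 J^{(m)}(w)=\tfrac12\|\Pi_{\mathcal D}w\|_{L^2}^2+\delta t^{(m)}\!\int_\Omega\!\sqrt{\epsilon^2+|\nabla_{\mathcal D}w|^2}+\tfrac{\lambda\delta t^{(m)}}{2}\|\Pi_{\mathcal D}w-g\|_{L^2}^2-\!\int_\Omega\!\Pi_{\mathcal D}u^{(m-1)}\,\Pi_{\mathcal D}w
\end{equation*}
over the finite-dimensional space $X_{\mathcal D}$. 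The first and third terms are quadratic and the second is convex, so $J^{(m)}$ is convex; strict convexity follows since $\|\cdot\|_{\mathcal D}$ in \eqref{eq-def:GD definition norm} is a norm and $\phi$ is strictly convex, while coercivity for $\|\cdot\|_{\mathcal D}$ comes from $\sqrt{\epsilon^2+|\xi|^2}\ge|\xi|$ together with the quadratic $\Pi_{\mathcal D}$-term. A strictly convex coercive functional on a finite-dimensional space has a unique minimiser, yielding a unique solution of \eqref{eq-def:GS} by induction on $m$.

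\textbf{Error equation.} Write $a(\xi):=\xi/\sqrt{\epsilon^2+|\xi|^2}$ and set $\widehat u^{(m)}:=P_{\mathcal D}\overline u(t^{(m)})$ via the nonlinear projection of Lemma \ref{lem:existence of NL interpolator}. I would first show that $\widehat u$ solves \eqref{eq-def:GS} up to a controlled residual: integrating the weak formulation \eqref{eq-def:weak form of rtvf} over $(t^{(m-1)},t^{(m)})$ against $\Pi_{\mathcal D}v$ and invoking limit-conformity (Definition \ref{def:GD limit conformity}) on the flux $a(\nabla\overline u)$ together with the interpolation bound \eqref{eq-def:GDM Error Estimate Theorem assumption 3}, one obtains for every $v\in X_{\mathcal D}$
\begin{align*}
 \int_\Omega(\Pi_{\mathcal D}\widehat u^{(m)}-\Pi_{\mathcal D}\widehat u^{(m-1)})\Pi_{\mathcal D}v+\delta t^{(m)}\!\int_\Omega a(\nabla_{\mathcal D}\widehat u^{(m)})\cdot\nabla_{\mathcal D}v+\lambda\delta t^{(m)}\!\int_\Omega(\Pi_{\mathcal D}\widehat u^{(m)}-g)\Pi_{\mathcal D}v=\mathcal R^{(m)}(v),
\end{align*}
where the residual $\mathcal R^{(m)}(v)$ gathers the implicit-Euler time error (bounded by $\delta t_{\mathcal D}^{\max}$ through the Lipschitz regularity (H3)--(H4)), the limit-conformity defect, and the projection error, all of size $\lesssim\delta t^{(m)}(\alpha_{\mathcal D}h_{\mathcal D}+h_{\mathcal D}+\delta t_{\mathcal D}^{\max})\|v\|_{\mathcal D}$. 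Subtracting from \eqref{eq-def:GS} gives the error equation for $\zeta^{(m)}:=u^{(m)}-\widehat u^{(m)}$; testing with $v=\zeta^{(m)}$, the quadratic $\Pi_{\mathcal D}$-term telescopes to deliver $L^\infty(L^2)$ control and the nonlinear term contributes the monotone quantity $\int_\Omega(a(\nabla_{\mathcal D}u^{(m)})-a(\nabla_{\mathcal D}\widehat u^{(m)}))\cdot(\nabla_{\mathcal D}u^{(m)}-\nabla_{\mathcal D}\widehat u^{(m)})\ge0$.

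\textbf{The main obstacle: degenerate monotonicity and bootstrap.} Since $a=\nabla\phi$ with
\begin{equation*}
 D^2\phi(\zeta)=\frac{1}{\sqrt{\epsilon^2+|\zeta|^2}}\Big(I-\frac{\zeta\otimes\zeta}{\epsilon^2+|\zeta|^2}\Big),\qquad \lambda_{\min}\big(D^2\phi(\zeta)\big)=\frac{\epsilon^2}{(\epsilon^2+|\zeta|^2)^{3/2}},
\end{equation*}
the above monotone quantity is bounded below by $(\epsilon^2+R^2)^{-3/2}\|\nabla_{\mathcal D}u^{(m)}-\nabla_{\mathcal D}\widehat u^{(m)}\|_{L^2}^2$ only once an a priori bound $\|\nabla_{\mathcal D}u^{(m)}\|_{L^\infty}\le R$ is known; this coercivity \emph{degenerates for large gradients}, so the energy estimate and the $L^\infty$ bound must be established \emph{simultaneously} by induction on $m$. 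The hard part is closing this bootstrap. Assuming the bound up to $m-1$, the now-uniform monotonicity and a discrete Gronwall argument control $\|\Pi_{\mathcal D}\zeta^{(\ell)}\|_{L^2}$ and the summed weighted gradient error by $\lesssim\omega_{\mathcal D}$ up to step $m$; extracting a single time level costs a factor $(\delta t_{\mathcal D}^{\min})^{-1/2}$, and the two-dimensional discrete Sobolev/inverse inequality for the piecewise-polynomial $\nabla_{\mathcal D}\zeta^{(m)}$ (the source of the $|\ln h_{\mathcal D}|^{1/2}$ in $\kappa_{\mathcal D}$), combined with (H6), converts this into an $L^\infty$ bound of size $\lesssim\kappa_{\mathcal D}+\kappa_{\mathcal D}^2$, closing the induction for $h_{\mathcal D},\delta t_{\mathcal D}$ small. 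Reinserting this $L^\infty$ bound into the degenerate weight $(\epsilon^2+R^2)^{-3/2}$ produces exactly the squared terms $\omega_{\mathcal D}^2$ and $\kappa_{\mathcal D}^2$ and the extra $(\delta t_{\mathcal D}^{\min})^{-1/2}$ appearing in \eqref{eq:GDM Error Estimate Theorem 1}.

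\textbf{Conclusion.} Finally I would convert the weighted $L^2$ gradient estimate into the stated $L^2(0,T;L^1)$ bound by Cauchy--Schwarz, writing $|\nabla_{\mathcal D}\zeta|=\big((\epsilon^2+R^2)^{-3/4}|\nabla_{\mathcal D}\zeta|\big)(\epsilon^2+R^2)^{3/4}$ and using $|\Omega|<\infty$ with the $L^\infty$ gradient bound; the same bound is what allows the $L^1$-type residual $\mathcal R^{(m)}$ to be absorbed by the coercive weighted term via Young's inequality. Adding the interpolation error $\|\Pi_{\mathcal D}\widehat u^{(m)}-\overline u(t^{(m)})\|_{L^2}\lesssim\alpha_{\mathcal D}h_{\mathcal D}$ and the companion gradient estimate from Lemma \ref{lem:existence of NL interpolator} through the triangle inequality, and tracking $\zeta^{(0)}$ (which yields $e_{\mathcal D}^{\text{ini}}$ in the Gronwall step), assembles the right-hand side $\omega_{\mathcal D}+\omega_{\mathcal D}^2(\delta t_{\mathcal D}^{\min})^{-1/2}$.
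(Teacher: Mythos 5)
Your well-posedness argument is a legitimate alternative to the paper's: the scheme at each time step is indeed the Euler--Lagrange equation of the strictly convex, coercive functional $J^{(m)}$ on the finite-dimensional space $X_{\mathcal D}$, so existence and uniqueness follow from convex minimisation rather than from the topological-degree/homotopy argument the paper uses (together with the strict monotonicity of $F^{(m)}$ via Lemma \ref{lemma:auxiliary 1}). Your error equation is also set up the same way as in the paper: compare $u^{(m)}$ with the nonlinear projection $P_{\mathcal D}\overline u(t^{(m)})$, and split the analysis into a stability estimate and a consistency residual.

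The genuine gap is in your treatment of the degenerate monotonicity. You propose to lower-bound the monotone term by $\lambda_{\min}(D^2\phi)\,\|\nabla_{\mathcal D}\zeta^{(m)}\|_{L^2}^2$ with $\lambda_{\min}\sim(\epsilon^2+R^2)^{-3/2}$, which requires an a priori bound $\|\nabla_{\mathcal D}u^{(m)}\|_{L^\infty}\le R$, and to obtain that bound by a simultaneous bootstrap using an inverse inequality on $\nabla_{\mathcal D}\zeta^{(m)}$. This bootstrap does not close: extracting a single time level from the summed energy estimate costs $(\delta t_{\mathcal D}^{\min})^{-1/2}$, and the two-dimensional inverse inequality $\|p\|_{L^\infty(\tau)}\lesssim|\tau|_2^{-1/2}\|p\|_{L^2(\tau)}$ costs a further $h_{\mathcal T}^{-1}$, so the best one can conclude is $\|\nabla_{\mathcal D}\zeta^{(m)}\|_{L^\infty}\lesssim h_{\mathcal T}^{-1}(\delta t_{\mathcal D}^{\min})^{-1/2}\omega_{\mathcal D}$. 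Since $\omega_{\mathcal D}\geq h_{\mathcal D}$ and $h_{\mathcal D}$ is comparable to $h_{\mathcal T}$ for the methods covered, this is of size $(\delta t_{\mathcal D}^{\min})^{-1/2}$, which is not $O(1)$; the induction cannot be closed under the hypotheses of the theorem. The paper never bounds $\|\nabla_{\mathcal D}u\|_{L^\infty}$. Instead, Lemma \ref{lemma:auxiliary 1} gives a monotonicity lower bound whose coefficient $1-|\nu|/\sqrt{\epsilon^2+|\nu|^2}$ depends only on $\nu=\nabla_{\mathcal D}P_{\mathcal D}\overline u$ --- which \emph{is} uniformly bounded, by \eqref{eq-lem:NL interpolation error estimate 1} --- leaving the uncontrolled $\nabla_{\mathcal D}u$ only in the denominator weight $\sqrt{\epsilon^2+|\nabla_{\mathcal D}u|^2}$. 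That weight is then handled in Lemma \ref{lem:main theorem step2} by splitting $\Omega$ into $\{|\nabla_{\mathcal D}u^{(\ell)}|>\xi+1\}$, where $|\nabla_{\mathcal D}u^{(\ell)}-\nabla_{\mathcal D}P_{\mathcal D}\overline u|/\sqrt{\epsilon^2+|\nabla_{\mathcal D}u^{(\ell)}|^2}$ is bounded below by a constant so the weighted square directly dominates the $L^1$ norm, and its complement, where Cauchy--Schwarz suffices. This splitting is the missing idea. A secondary inaccuracy: the factor $|\ln h_{\mathcal D}|^{1/2}$ in $\kappa_{\mathcal D}$ does not come from a discrete Sobolev or inverse inequality applied to the scheme solution; it originates in the Aubin--Nitsche duality argument for the $L^2$ error of the nonlinear projector (estimate \eqref{eq-lem:NL interpolation error estimate 4}), via optimisation of the embedding $H^1(\Omega)\hookrightarrow L^q(\Omega)$ over $q$.
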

\begin{proof}
    See Section \ref{section:main proof}.
\end{proof}
The second theorem concerns the error estimates with respect to the mesh size, in the \(L^2\)-norm on the gradient.
\begin{theorem}\label{thm:GDM Error Estimate Theorem 2}
    Suppose that Assumption \ref{def:GDM Error Estimate Theorem assumption} holds.
    In addition, assume that 
    \begin{equation}\label{eq-thm:GDM Error Estimate Theorem 2 assumption}
        e_{\mathcal{D}}^{\text{ini}} + \delta t_{\mathcal{D}}^{\max} \lesssim h_{\mathcal{T}},
    \end{equation}
    the solution \(u \) to the gradient scheme \eqref{eq-def:GS} satisfies the following inequality:
    \begin{equation*}%\label{eq-thm:GDM Error Estimate Theorem 2}
             \max_{1 \leq \ell \leq M} \| \Pi_{\mathcal{D}} u^{(\ell)} -   \overline{u}(t^{(\ell)}) \|_{L^2(\Omega)} + \| \nabla_{\mathcal{D}} u - \nabla \overline{u} \|_{L^2(0,T; L^2(\Omega))} \lesssim  h_{\mathcal{T}}.
    \end{equation*}  
\end{theorem}
\begin{proof}
    See Section \ref{section:main proof}.
\end{proof}

\begin{remark}
    Assumption \ref{def:GDM Error Estimate Theorem assumption} is quite strong but, since the model we consider is not singular, higher regularity properties on the solution can be establishes in certain situations, see \cite[Theorem 1.2]{Feng2003AnalysisOT}. Such regularity is classically required to establish a convergence rate in terms of mesh size. If the solution is not expected to be smooth, a proof of convergence based on compactness techniques is more appropriate. These techniques have been applied in the GDM framework for time-dependent nonlinear and singular PDEs \cite[Section 6]{Droniou2018TheGD}; extending them to the present gradient-flow model could be the subject of future work.
\end{remark}

\begin{remark}[Spatial dimension]
As in previous works (see, e.g., \cite{Feng2003AnalysisOT}), our analysis strongly relies on the fact that the spatial dimension is $2$.
Having in mind gradient flows associated with minimal surfaces or image denoising, this is actually the relevant dimension to consider.
\end{remark}

\section{The non-linear spatial interpolator}\label{section:interpolation estimate}
The main results of this section is the following: 
\begin{itemize}
    \item An interpolation \(P_{\mathcal{D}}: H^1(\Omega) \rightarrow X_{\mathcal{D}}\) is introduced in Lemma \ref{lem:existence of NL interpolator} with a proof of the existence;
    \item Error estimates of \(P_{\mathcal{D}}\) is provided in Lemma \ref{lem:NL interpolation error estimate} and Lemma \ref{lem:NL interpolation error estimate time derivative}, which are established by borrowing a few tricks from \cite{rannacher1977some,johnson1975error,Droniou2016ImprovedE}.
\end{itemize}

We first recall a linear spatial interpolator and its properties. 
\begin{lemma}
    Define \(I_{\mathcal{D}}:H^1(\Omega) \rightarrow X_{\mathcal{D}}\) via: for any \(\varphi \in H^1(\Omega)\),
    \begin{equation}\label{eq-lem:linear interpolator def}
        I_{\mathcal{D}} \varphi = \argmin_{v \in X_{\mathcal{D}}} \|\Pi_{\mathcal{D}} v - \varphi \|_{L^2(\Omega)}^2 + \|\nabla_{\mathcal{D}} v - \nabla \varphi\|_{L^2(\Omega)}^2.
    \end{equation}
    Then  \(I_{\mathcal{D}} \varphi\) is unique, \(I_{\mathcal{D}} : H^1(\Omega) \rightarrow X_{\mathcal{D}}\) is linear and
    \begin{equation}\label{eq-lem:linear interpolator prop}
         \max\big\{\| \Pi_{\mathcal{D}} I_{\mathcal{D}}  \varphi - \varphi \|_{L^2(\Omega)}, \| \nabla_{\mathcal{D}} I_{\mathcal{D}} \varphi - \nabla \varphi \|_{L^2(\Omega)} \big\} \leq  \sqrt{2}S_{\mathcal{D}}(\varphi)\leq   \sqrt{2}h_{\mathcal{D}} \|\varphi\|_{W_s}.
    \end{equation}
\end{lemma}
\begin{proof}
    Let \(V = \{(\Pi_{\mathcal{D}}w , \nabla_{\mathcal{D}}w): w\in X_{\mathcal{D}}\}\)
    and \(\mathcal{P}: L^2(\Omega) \times L^2(\Omega)^2 \rightarrow V \) be the orthogonal projection.
    Since \(|\int_{\Omega}\Pi_{\mathcal{D}} \cdot|  + \|\nabla_{\mathcal{D}} \cdot\|_{L^2(\Omega)}\) is a norm on \(X_{\mathcal{D}}\), 
    for any \(z \in V\) there exists a unique \(\mathcal{R}z \in X_{\mathcal{D}}\) such that \((\Pi_{\mathcal{D}} \mathcal{R}z, \nabla_{\mathcal{D}} \mathcal{R}z) = z\).
    This defines a linear continuous mapping \(\mathcal{R}: V \rightarrow X_{\mathcal{D}}\)
    and \eqref{eq-lem:linear interpolator def} shows that \(I_{\mathcal{D}} \varphi = \mathcal{R} \circ \mathcal{P}(\varphi, \nabla \varphi)\) for all \(\varphi \in H^1(\Omega)\).
    Hence, \(I_{\mathcal{D}}\varphi\) is uniquely defined and \(I_{\mathcal{D}}\) is linear continuous.

    Taking \(v \in X_{\mathcal{D}}\) that realises the minimum that defines \(S_{\mathcal{D}}(\varphi)\) and using the definition of \(I_{\mathcal{D}}\) shows that 
    \begin{equation*}
        \|\Pi_{\mathcal{D}} I_{\mathcal{D}} \varphi - \varphi \|_{L^2(\Omega)}^2 + \|\nabla_{\mathcal{D}} I_{\mathcal{D}} \varphi - \nabla \varphi\|_{L^2(\Omega)}^2 \leq \|\Pi_{\mathcal{D}} v - \varphi \|_{L^2(\Omega)}^2 + \|\nabla_{\mathcal{D}} v - \nabla \varphi\|_{L^2(\Omega)}^2 \leq 2S_{\mathcal{D}}(\varphi)^2.
    \end{equation*}
    Then the property \eqref{eq-lem:linear interpolator prop} follows from this and Definition \ref{def:GD space size} of the space size.
\end{proof}
Now we define a non-linear spatial interpolator.
\begin{lemma}\label{lem:existence of NL interpolator}
    For all \(u \in W^{1,\infty}(\Omega)\),
    there exists a unique \(P_{\mathcal{D}} u \in X_{\mathcal{D}}\) satisfying
    \begin{subequations}\label{eq-lem:existence of NL interpolator}
        \begin{align}
        \int_{\Omega} \Pi_{\mathcal{D}} P_{\mathcal{D}} u & = \int_{\Omega} u,\label{eq-lem:existence of NL interpolator assumption} \\
        \bigg\langle \frac{\nabla_{\mathcal{D}} P_{\mathcal{D}} u}{ |\nabla_{\mathcal{D}} P_{\mathcal{D}} u|_{\rho}}, \nabla_{\mathcal{D}}w  \bigg\rangle 
        & =
        \bigg\langle  \frac{\nabla  u}{ |\nabla  u|_{\rho} }, \nabla_{\mathcal{D}} w  \bigg\rangle
        \quad 
        \forall w  \in X_{\mathcal{D}}.\label{eq:existence of NL interpolator lemma 2}
        \end{align}
    \end{subequations} 
\end{lemma}

\begin{proof}
    Let \(\ell_u: \nabla_{\mathcal{D}} (X_{\mathcal{D}}) \rightarrow \mathbb{R}\) be the linear form defined by the right-hand side of \eqref{eq:existence of NL interpolator lemma 2}.
    Consider the minimisation problem
    \begin{equation*}
        \min_{v\in X_{\mathcal{D}}} \int_{\Omega} |\nabla_{\mathcal{D}}v|_{\rho} - \ell_u(\nabla_{\mathcal{D}}v) =: \min_{v\in X_{\mathcal{D}}} E(\nabla_{\mathcal{D}}v).
    \end{equation*}
    Then the derivative of \(E(\nabla_{\mathcal{D}} \cdot)\) is: for \(w \in X_{\mathcal{D}}\),
    \begin{equation*}
        DE(\nabla_{\mathcal{D}}v)[\nabla_{\mathcal{D}}w] 
        := 
        \lim_{\alpha \rightarrow 0} \frac{E(\nabla_{\mathcal{D}}v+\alpha \nabla_{\mathcal{D}}w) - E(\nabla_{\mathcal{D}}v)}{\alpha} = \int_{\Omega} \frac{  \nabla_{\mathcal{D}} v }{ |\nabla_{\mathcal{D}}v|_{\rho}}\cdot \nabla_{\mathcal{D}}w - \ell_u(\nabla_{\mathcal{D}}w).
    \end{equation*}
    Since \(E\) is strictly convex and \(X_{\mathcal{D}}\) is finite dimensional, there exists a unique minimiser in \(\nabla_{\mathcal{D}}(X_{\mathcal{D}})\).
    Let \(\nabla_{\mathcal{D}} u_{\mathcal{D}}\) realise the minimisation of \(E\).
    Then for all \(w \in X_{\mathcal{D}}\),
    \begin{equation}\label{eq-proof-lem:existence of NL interpolator 1}
        0 = DE(\nabla_{\mathcal{D}}u_{\mathcal{D}})[\nabla_{\mathcal{D}}w] =
        \int_{\Omega}\frac{  \nabla_{\mathcal{D}} u_{\mathcal{D}}}{ |\nabla_{\mathcal{D}}u_{\mathcal{D}}|_{\rho}}\cdot \nabla_{\mathcal{D}}w - \ell_u(\nabla_{\mathcal{D}}w).
    \end{equation}
    Note that, for any \(C \in \mathbb{R}\), \(\nabla_{\mathcal{D}}(u_{\mathcal{D}} + C\bm{1}_{\mathcal{D}})\) is also a minimizer of \(E\) 
    and hence satisfies \eqref{eq-proof-lem:existence of NL interpolator 1}.
    Choosing 
    \[C = C(u_{\mathcal{D}}) := \frac{1}{|\Omega|_2} \bigg(\int_{\Omega} u - \int_{\Omega} \Pi_{\mathcal{D}} u_{\mathcal{D}} \bigg)\]
    leads to 
    \begin{equation*}
        \int_{\Omega} \Pi_{\mathcal{D}} (u_{\mathcal{D}} + C(u_{\mathcal{D}}) \bm{1}_{\mathcal{D}}) = \int_{\Omega} u.
    \end{equation*}
    Thus \(\overline{u_{\mathcal{D}}} := u_{\mathcal{D}} + C(u_{\mathcal{D}}) \bm{1}_{\mathcal{D}}\) satisfies \eqref{eq-lem:existence of NL interpolator}.
    Now suppose that there exits \(\overline{v_{\mathcal{D}}} \in X_{\mathcal{D}}\) such that  \(\overline{v_{\mathcal{D}}} \) satisfies \eqref{eq-lem:existence of NL interpolator}.
    We have 
    \begin{align*}
         \int_{\Omega} \Pi_{\mathcal{D}}\overline{u_{\mathcal{D}}} - \Pi_{\mathcal{D}}\overline{v_{\mathcal{D}}} = 0
         \quad \text{and} \quad 
         \bigg\langle \frac{\nabla_{\mathcal{D}} \overline{u_{\mathcal{D}}}}{ |\nabla_{\mathcal{D}} \overline{u_{\mathcal{D}}}|_{\rho}} - \frac{\nabla_{\mathcal{D}} \overline{v_{\mathcal{D}}}}{ |\nabla_{\mathcal{D}}\overline{v_{\mathcal{D}}}|_{\rho}}, \nabla_{\mathcal{D}}w  \bigg\rangle 
         = 0
        \quad 
        \forall w  \in X_{\mathcal{D}}. 
    \end{align*}
    This and \eqref{eq-lem:aux 0 3} imply that 
    \begin{equation*}
        \int_{\Omega} \Pi_{\mathcal{D}}(\overline{u_{\mathcal{D}}} - \overline{v_{\mathcal{D}}}) = 0 
        \quad \text{and} \quad \nabla_{\mathcal{D}} (\overline{u_{\mathcal{D}}} - \overline{v_{\mathcal{D}}}) = 0.
    \end{equation*}
    Since \eqref{eq-def:GD definition norm} is a norm on \(X_{\mathcal{D}}\), we have \(\overline{u_{\mathcal{D}}} - \overline{v_{\mathcal{D}}} = 0\). 
    Thus \(\overline{u_{\mathcal{D}}}\) is unique.
    Letting \(P_{\mathcal{D}}u := \overline{u_{\mathcal{D}}} \) completes the proof.
\end{proof}

This following lemma concerns with the error estimates of \(P_{\mathcal{D}}\).
\begin{lemma}\label{lem:NL interpolation error estimate}
    Suppose that Assumption \eqref{def:GDM Error Estimate Theorem assumption} holds.
    Let \(P_{\mathcal{D}}\) be the non-linear spatial interpolator in the sense of Lemma \ref{lem:existence of NL interpolator}.
    Let  \(u \in   W^{2, \infty}(\Omega) \cap W_s\).
    Then,  
    \begin{subequations}
        \begin{align}
            &\|\nabla_{\mathcal{D}} P_{\mathcal{D}} u\|_{L^{\infty}(\Omega)} 
            \lesssim 1,\label{eq-lem:NL interpolation error estimate 1}\\
            &\|\nabla_{\mathcal{D}} P_{\mathcal{D}} u - \nabla u\|_{L^2(\Omega)} 
            \lesssim  h_{\mathcal{D}},\label{eq-lem:NL interpolation error estimate 2}\\
            &\|\Pi_{\mathcal{D}} P_{\mathcal{D}} u - u\|_{L^2(\Omega)} \lesssim  h_{\mathcal{D}}. \label{eq-lem:NL interpolation error estimate 4}
        \end{align}
\end{subequations}
\end{lemma}  
\begin{remark}
    The hidden constants in the estimates \eqref{eq-lem:NL interpolation error estimate 1}, \eqref{eq-lem:NL interpolation error estimate 2}, and \eqref{eq-lem:NL interpolation error estimate 4} depend on \(u\),
    in particular,
    \(\|u\|_{W_s}\), \(\|u\|_{H^2(\Omega)}\), and \(\Lambda^{(1)}\) (see \eqref{eq-proof-lem::NL interpolation error estimate constant 1} below).
    Since the exact solution \(\overline{u}\) is assumed to be Lipschitz-continuous \([0,T] \rightarrow  W^{1, \infty}(\Omega) \cap H^2(\Omega) \cap W_s\), 
    for any \(t \in [0,T]\),
    the estimates \eqref{eq-lem:NL interpolation error estimate 1}, \eqref{eq-lem:NL interpolation error estimate 2}, and \eqref{eq-lem:NL interpolation error estimate 4}  hold for \(u = \overline{u}(t)\) with hidden constants that are independent of \(t\).
\end{remark}
\begin{proof}
    We first show an \(L^1(\Omega)\)-estimate.
    By the Cauchy-Schwarz inequality, we write
    \begin{equation}\label{eq-proof-lem::NL interpolation error estimate 1}
        \| \nabla {u} - \nabla_{\mathcal{D}} P_{\mathcal{D}} {u}\|_{L^1(\Omega)}  
        \leq
        \bigg(\int_{\Omega} \frac{|\nabla {u} - \nabla_{\mathcal{D}} P_{\mathcal{D}} {u}|^2}{ |\nabla_{\mathcal{D}} P_{\mathcal{D}} {u}|_{\rho}}\bigg)^{\frac{1}{2}}
        \bigg(\int_{\Omega}  |\nabla_{\mathcal{D}} P_{\mathcal{D}} {u}|_{\rho}\bigg)^{\frac{1}{2}}. 
    \end{equation}
    Let \(\mathcal{X} := I_{\mathcal{D}}u - {P_{\mathcal{D}}} {u} \).
    We have
    \begin{align*}
        A^2 
        & := \int_{\Omega} \frac{|\nabla {u} - \nabla_{\mathcal{D}} P_{\mathcal{D}} {u}|^2}{ |\nabla_{\mathcal{D}} P_{\mathcal{D}} {u}|_{\rho}} \\
        & = \int_{\Omega} \frac{(\nabla {u} - \nabla_{\mathcal{D}} P_{\mathcal{D}} {u}) \cdot \nabla_{\mathcal{D}} \mathcal{X}}{ |\nabla_{\mathcal{D}} P_{\mathcal{D}} {u}|_{\rho}} 
        + \int_{\Omega} \frac{(\nabla {u} - \nabla_{\mathcal{D}} P_{\mathcal{D}} {u}) \cdot (\nabla {u} - \nabla_{\mathcal{D}} I_{\mathcal{D}}u )}{ |\nabla_{\mathcal{D}} P_{\mathcal{D}} {u}|_{\rho}} 
         =: I_1  + I_2.
    \end{align*}

    Consider the term \(I_1\). By \eqref{eq:existence of NL interpolator lemma 2}, we write 
    \begin{align*}
        I_1 
        = 
            \int_{\Omega}  \nabla_{\mathcal{D}} \mathcal{X} \cdot\frac{ \nabla {u}}{ |\nabla_{\mathcal{D}} P_{\mathcal{D}} {u}|_{\rho}} 
        - 
            \int_{\Omega} \nabla_{\mathcal{D}} \mathcal{X} \cdot \frac{\nabla {u}}{ |\nabla  {u}|_{\rho}}.
    \end{align*}
    Then, under some algebraic manipulation, and uses of the Cauchy-Schwarz inequality,
    we write
    \begin{align*}
        I_1 
        & =
            \int_{\Omega} \nabla_{\mathcal{D}} \mathcal{X} \cdot\frac{ \nabla {u} \, (|\nabla {u}|^2 - |\nabla_{\mathcal{D}} P_{\mathcal{D}} {u}|^2)}{ |\nabla_{\mathcal{D}} P_{\mathcal{D}} {u}|_{\rho}  |\nabla  {u}|_{\rho} ( |\nabla_{\mathcal{D}} P_{\mathcal{D}} {u}|_{\rho} +  |\nabla  {u}|_{\rho})} \\
        & =  
            \int_{\Omega} (\nabla_{\mathcal{D}} \mathcal{X} \cdot \nabla {u})  
            \bigg(\frac{\nabla {u} - \nabla_{\mathcal{D}} P_{\mathcal{D}} {u}}{ |\nabla_{\mathcal{D}} P_{\mathcal{D}} {u}|_{\rho} |\nabla  {u}|_{\rho}} \cdot \frac{\nabla {u}+\nabla_{\mathcal{D}} P_{\mathcal{D}} {u}}{ ( |\nabla_{\mathcal{D}} P_{\mathcal{D}} {u}|_{\rho} +  |\nabla  {u}|_{\rho})}\bigg) \\
        & \leq   
            \int_{\Omega} 
            | \nabla_{\mathcal{D}} \mathcal{X} | 
            \bigg|  \frac{\nabla {u}}{ |\nabla  {u}|_{\rho}} \bigg| 
            \bigg|\frac{\nabla {u} - \nabla_{\mathcal{D}} P_{\mathcal{D}} {u}}{ |\nabla_{\mathcal{D}} P_{\mathcal{D}} {u}|_{\rho}} \bigg|  
            \bigg|\frac{\nabla {u}+\nabla_{\mathcal{D}} P_{\mathcal{D}} {u}}{ |\nabla_{\mathcal{D}} P_{\mathcal{D}} {u}|_{\rho} +  |\nabla  {u}|_{\rho} } \bigg|.
    \end{align*}
    Since \(\nabla u \in L^{\infty}(\Omega)\), we have
    \begin{equation}\label{eq-proof-lem::NL interpolation error estimate constant 1}
        \Lambda^{(1)} 
        :=
        \esssup_{x\in \Omega} \frac{|\nabla {u}(x)| }{ |\nabla {u}(x)|_{\rho}}  < 1.
    \end{equation} 
    Also notice the relation 
    \begin{equation*}
        \frac{|u + v|}{ |u|_{\rho} +  |v|_{\rho}} \
        \leq \frac{|u| + |v|}{|u| + |v|} 
        = 1, 
        \quad 
        \forall u, v \in \mathbb{R}^2.
    \end{equation*}
    Then the Cauchy-Schwarz inequality yields
    \begin{align*}
        I_1 
        \leq 
            \Lambda^{(1)} \int_{\Omega} |\nabla_{\mathcal{D}} \mathcal{X}| 
            \, \frac{|\nabla {u} - \nabla_{\mathcal{D}}P_{\mathcal{D}}{u}|}{ |\nabla_{\mathcal{D}}P_{\mathcal{D}}{u}|_{\rho}} 
        \leq 
            \Lambda^{(1)}\bigg(\int_{\Omega} \frac{|\nabla_{\mathcal{D}} \mathcal{X}|^2}{ |\nabla_{\mathcal{D}}P_{\mathcal{D}}{u}|_{\rho}} \bigg)^{\frac{1}{2}}  
            \bigg(\int_{\Omega} \frac{|\nabla {u} - \nabla_{\mathcal{D}}P_{\mathcal{D}}{u}|^2}{ |\nabla_{\mathcal{D}}P_{\mathcal{D}}{u}|_{\rho}} \bigg)^{\frac{1}{2}}.
    \end{align*}
    Set \[
    N(\cdot) :=  \bigg(\int_{\Omega} \frac{|\cdot|^2}{ |\nabla_{\mathcal{D}}P_{\mathcal{D}}{u}|_{\rho}} \bigg)^{\frac{1}{2}}, 
    \]
    which is a weighted norm.
    The above estimate for \(I_1\) can be rewritten in terms of \(N(\nabla_{\mathcal{D}} \mathcal{X})\) and \(A\).
    Expanding \(\mathcal{X}\), introducing \(\nabla {u}\) and compensating in \(N\), then using the triangle inequality, we obtain 
    \begin{align*}
        I_1 
        \leq 
            \Lambda^{(1)} N( \nabla_{\mathcal{D}} \mathcal{X}) A 
        \leq \, &
            \Lambda^{(1)} \big[ N(\nabla_{\mathcal{D}} I_{\mathcal{D}}u - \nabla  {u}) + N(\nabla {u} - \nabla_{\mathcal{D}}P_{\mathcal{D}}{u}) \big] A \\
        \leq \, &
            \Lambda^{(1)} \big( \rho^{-\frac{1}{2}} \|\nabla_{\mathcal{D}} I_{\mathcal{D}}u - \nabla {u}\|_{L^2(\Omega)} + A \big) A.
    \end{align*}

    Now consider the term \(I_2\). By the Cauchy-Schwarz inequality,
    \begin{equation*}
        I_2 
        \leq  
            \bigg(\int_{\Omega} \frac{|\nabla {u} - \nabla_{\mathcal{D}} P_{\mathcal{D}} {u}|^2 }{ |\nabla_{\mathcal{D}} P_{\mathcal{D}} {u}|_{\rho}} \bigg)^{\frac{1}{2}} 
            \bigg(\int_{\Omega} \frac{|\nabla {u} - \nabla_{\mathcal{D}} I_{\mathcal{D}}u|^2}{ |\nabla_{\mathcal{D}} P_{\mathcal{D}} {u}|_{\rho}} \bigg)^{\frac{1}{2}}  
        \leq 
            A \rho^{ - \frac{1}{2}}\|\nabla_{\mathcal{D}} I_{\mathcal{D}}u - \nabla {u}\|_{L^2(\Omega)}. 
    \end{equation*}
    Therefore, combining the estimates on \(I_1\) and \(I_2\) leads to
    \begin{equation*}
        A^2 
        \leq
            \Lambda^{(1)}A \big( \rho^{-\frac{1}{2}} \|\nabla_{\mathcal{D}} I_{\mathcal{D}}u - \nabla {u}\|_{L^2(\Omega)} + A \big) + A \rho^{-\frac{1}{2}} \|\nabla_{\mathcal{D}} I_{\mathcal{D}}u - \nabla {u}\|_{L^2(\Omega)},
    \end{equation*}
    which implies 
    \begin{equation*} %\label{eq-proof-lem::NL interpolation error estimate 2}
        A 
        \leq 
            \rho^{-\frac{1}{2}} \frac{1 + \Lambda^{(1)}}{1 - \Lambda^{(1)}} \|\nabla_{\mathcal{D}} I_{\mathcal{D}}u - \nabla {u} \|_{L^2(\Omega)}. 
    \end{equation*}
    Then it follows from \eqref{eq-lem:linear interpolator prop} that 
    \begin{equation}\label{eq-proof-lem::NL interpolation error estimate 3}
        A 
        \leq
            \rho^{-\frac{1}{2}} \frac{1+\Lambda^{(1)}}{ 1-\Lambda^{(1)}} \| {u} \|_{W_s} h_{\mathcal{D}} . 
    \end{equation}
    Now we show that \(\int_{\Omega} |\nabla_{\mathcal{D}}P_{\mathcal{D}} {u}|_{\rho} \) is bounded. 
    Let \(w = P_{\mathcal{D}} {u}\) in \eqref{eq:existence of NL interpolator lemma 2},
    \begin{align*}
        \int_{\Omega} \frac{|\nabla_{\mathcal{D}}P_{\mathcal{D}}{u}|^2}{ |\nabla_{\mathcal{D}}P_{\mathcal{D}}{u}|_{\rho} } 
        =
            \int_{\Omega} \frac{\nabla {u} \cdot \nabla_{\mathcal{D}}P_{\mathcal{D}}{u}}{ |\nabla {u}|_{\rho} }  
        \leq 
            \Lambda^{(1)} \int_{\Omega}|\nabla_{\mathcal{D}}P_{\mathcal{D}}{u}|.
    \end{align*} 
    We infer that
    \begin{equation*}
                \int_{\Omega} |\nabla_{\mathcal{D}}P_{\mathcal{D}}{u}|_{\rho} 
         = 
            \int_{\Omega} \frac{|\nabla_{\mathcal{D}}P_{\mathcal{D}}{u}|^2}{ |\nabla_{\mathcal{D}}P_{\mathcal{D}}{u}|_{\rho}} + \int_{\Omega}\frac{\rho^2}{ |\nabla_{\mathcal{D}}P_{\mathcal{D}}{u}|_{\rho}} 
        \leq 
            \Lambda^{(1)} \int_{\Omega} |\nabla_{\mathcal{D}}P_{\mathcal{D}}{u}|_{\rho} + \rho|\Omega|_2,
    \end{equation*}
    where the first inequality holds by bounding the integrand of the second summand by \(\rho\).
    Hence, 
    \begin{equation}\label{eq-proof-lem::NL interpolation error estimate 4}
        \bigg(\int_{\Omega}  |\nabla_{\mathcal{D}}P_{\mathcal{D}}{u}|_{\rho} \bigg)^{\frac{1}{2}}
        \leq
            \Big( \frac{\rho|\Omega|_2}{1-\Lambda^{(1)}}\Big)^{\frac{1}{2}}.
            % \leq \frac{\sqrt{\rho |\Omega|_2}}{1 - \Lambda^{(1)}}. 
    \end{equation}

    From \eqref{eq-proof-lem::NL interpolation error estimate 1}, combining estimates \eqref{eq-proof-lem::NL interpolation error estimate 3} and \eqref{eq-proof-lem::NL interpolation error estimate 4} yields 
    \begin{equation*}%\label{eq-proof-lem::NL interpolation error estimate 5}
        \| \nabla_{\mathcal{D}} P_{\mathcal{D}} {u} - \nabla {u} \|_{L^1(\Omega)} 
        \leq
            |\Omega|_2^{\frac{1}{2}} \frac{1 + \Lambda^{(1)}}{(1-\Lambda^{(1)})^{3/2}} \|{u}\|_{W_s} h_{\mathcal{D}}. 
    \end{equation*} 
    
    \noindent\textbf{Proof of \eqref{eq-lem:NL interpolation error estimate 1}.}
    Let  \(\mathcal{T}\) be the mesh from (iii) in Definition \ref{def:GD definition} .
    From \eqref{eq-proof-lem::NL interpolation error estimate 3}, for each cell \(\tau \in \mathcal{T}\),
    \begin{equation}\label{eq-proof-lem::NL interpolation error estimate 3 1}
        \theta_{\mathcal{D}} :=  \int_{\tau} \frac{|\nabla {u} - \nabla_{\mathcal{D}} P_{\mathcal{D}} {u} |^2}{ |\nabla_{\mathcal{D}} P_{\mathcal{D}} {u}|_{\rho}} \leq A^2 
        \leq 
            \Big(\frac{1 + \Lambda^{(1)}}{1 - \Lambda^{(1)}} \rho^{-\frac{1}{2}} \| {u}\|_{W_s} h_{\mathcal{D}}\Big)^2.
    \end{equation}
    Under some algebraic manipulation, the following can be obtained 
     \begin{align*}
        \int_{\tau} | \nabla_{\mathcal{D}} P_{\mathcal{D}} {u} |
        &\leq  
            \int_{\tau} \frac{|\nabla_{\mathcal{D}} P_{\mathcal{D}} {u}|^2}{ |\nabla_{\mathcal{D}} P_{\mathcal{D}} {u}|_{\rho}} + \frac{\rho^2}{ |\nabla_{\mathcal{D}} P_{\mathcal{D}} {u}|_{\rho}} \\
        &\leq  
            \int_{\tau} \frac{|\nabla_{\mathcal{D}} P_{\mathcal{D}} {u}|^2}{ |\nabla_{\mathcal{D}} P_{\mathcal{D}} {u}|_{\rho}} + \rho |\tau|_2 \\
        & =  
            \int_{\tau} \frac{(|\nabla {u} | - |\nabla_{\mathcal{D}} P_{\mathcal{D}} {u} |)^2}{ |\nabla_{\mathcal{D}} P_{\mathcal{D}} {u}|_{\rho}} 
            + \int_{\tau} \frac{ 2 |\nabla {u} | |\nabla_{\mathcal{D}} P_{\mathcal{D}} {u}|}{|\nabla_{\mathcal{D}} P_{\mathcal{D}} {u}|_{\rho}}  - \int_{\tau} \frac{|\nabla {u}|^2}{ |\nabla_{\mathcal{D}} P_{\mathcal{D}} {u}|_{\rho}} + \rho |\tau|_2 \\
        & \leq 
            \int_{\tau} \frac{|\nabla {u} - \nabla_{\mathcal{D}} P_{\mathcal{D}} {u} |^2}{ |\nabla_{\mathcal{D}} P_{\mathcal{D}} {u}|_{\rho}} 
            + \int_{\tau} \frac{ 2 |\nabla {u}| |\nabla_{\mathcal{D}} P_{\mathcal{D}} {u}|}{  |\nabla_{\mathcal{D}} P_{\mathcal{D}} {u}|_{\rho}} + \rho|\tau|_2\\
        & \leq 
            \theta_{\mathcal{D}} + 2  \esssup_{x \in \tau} |\nabla {u}(x)| |\tau|_2 + \rho |\tau|_2.
    \end{align*} 
    By the discrete Lebesgue estimates \cite[Lemma 1.25]{Pietro2020TheHH}
    and (iii) in Definition \ref{def:GD definition},
    \begin{equation}\label{eq-proof-lem::NL interpolation error estimate 6}
        \| \nabla_{\mathcal{D}} P_{\mathcal{D}} {u} \|_{L^{\infty}(\tau)} \
        \lesssim \frac{1}{|\tau|_2}\int_{\tau}|\nabla_{\mathcal{D}} P_{\mathcal{D}} u|
        \lesssim
            \frac{\theta_{\mathcal{D}} }{|\tau|_2} + 1. 
    \end{equation}
    We recall that, for any \(\tau \in \mathcal{T}\), \(h_{\mathcal{T}}^2 \lesssim |\tau|_2\).
    Involving \ref{eq-proof-lem::NL interpolation error estimate 3 1} and \(h_{\mathcal{D}} \lesssim h_{\mathcal{T}}\) by the assumption \ref{eq-def:GDM Error Estimate Theorem assumption 2},
    this leads to
    \begin{equation}\label{eq-proof-lem::NL interpolation error estimate 7}
        \frac{\theta_{\mathcal{D}}}{|\tau|_2} 
        \lesssim 
            \frac{h_{\mathcal{D}}^2}{|\tau|_2} 
        \lesssim 
            \frac{h_{\mathcal{T}}^2}{|\tau|_2} 
        \lesssim 1. 
    \end{equation}
    Therefore, combining the estimates \eqref{eq-proof-lem::NL interpolation error estimate 6} and \eqref{eq-proof-lem::NL interpolation error estimate 7} implies
    \begin{equation*}
        \| \nabla_{\mathcal{D}} P_{\mathcal{D}} {u} \|_{L^{\infty}(\tau)} 
        \lesssim 
            1, 
        \quad 
        \forall \tau \in \mathcal{T}.
    \end{equation*}
    Then \eqref{eq-lem:NL interpolation error estimate 1} follows.

    \noindent \textbf{Proof of  \eqref{eq-lem:NL interpolation error estimate 2}.}
    Use \eqref{eq-lem:NL interpolation error estimate 1} to write
    \begin{equation*}
        \| \nabla {u} - \nabla_{\mathcal{D}} P_{\mathcal{D}} {u} \|_{L^2({\Omega})}^2  
        \lesssim 
            \big(\rho + \|\nabla_{\mathcal{D}} P_{\mathcal{D}} {u}\|_{L^{\infty}(\Omega)} \big)  \int_{\Omega} \frac{|\nabla {u} - \nabla_{\mathcal{D}} P_{\mathcal{D}} {u} |^2}{|\nabla_{\mathcal{D}} P_{\mathcal{D}} {u}|_{\rho}}.
    \end{equation*}
    Using the estimate \eqref{eq-proof-lem::NL interpolation error estimate 3} on the integral and \eqref{eq-lem:NL interpolation error estimate 1} concludes the proof of \eqref{eq-lem:NL interpolation error estimate 2}.
    
    \noindent \textbf{Proof of \eqref{eq-lem:NL interpolation error estimate 4}.}
    We proceed using the Aubin-Nitsche trick.
    Denote \(F(\cdot) := \cdot / |\cdot|_{\rho}\). 
    
Let \(g \in L^2(\Omega)\) with \(\|g\|_{L^2({\Omega})} \leq 1\) and \(\int_{\Omega} g = 0\). 
Let \(\varphi_g \in H^1(\Omega)\) with \(\int_{\Omega} \varphi_g = 0\) be the solution of 
    \begin{subequations}
        \begin{equation}\label{eq-proof-lem::NL interpolation error estimate Aubin trick 3a}
            \begin{aligned}
                -\text{div} (DF(\nabla u) \nabla \varphi_g) & = g \text{ in \(\Omega\)},\\
                \nabla \varphi_g \cdot \bm{n} & = 0 \text{ on \(\partial \Omega\)}.
            \end{aligned}
        \end{equation}
        Let \(\varphi_{g, \mathcal{D}} \in X_{\mathcal{D}}\) be the solution of \(\int_{\Omega} \Pi_{\mathcal{D}} \varphi_{g, \mathcal{D}} = 0\) and 
        \begin{equation}\label{eq-proof-lem::NL interpolation error estimate Aubin trick 3b}
            \int_{\Omega} DF(\nabla u) \nabla_{\mathcal{D}} \varphi_{g, \mathcal{D}} \cdot \nabla_{\mathcal{D}} v_{\mathcal{D}} 
            =
                \int_{\Omega} g \, \Pi_{\mathcal{D}} v_{\mathcal{D}}, 
            \quad
            \forall v_{\mathcal{D}} \in X_{\mathcal{D}}. 
        \end{equation}     
    \end{subequations} 
    By \(\nabla u \in W^{1, \infty}(\Omega)\) and \eqref{eq-lem:aux 2},  \(DF(\nabla u)\) is Lipschitz-continuous on \(\Omega\), so $\varphi_g\in H^2(\Omega)$.
    By \cite[Theorem 3.30]{Droniou2018TheGD},  we infer
    \begin{equation}\label{eq-proof-lem::NL interpolation error estimate Aubin trick 4}
        \| \nabla_{\mathcal{D}} \varphi_{g, \mathcal{D}} - \nabla \varphi_g \|_{L^2(\Omega)} 
        \lesssim
            \|\varphi_g \|_{H^2(\Omega)} h_{\mathcal{D}} 
        \lesssim
            \|g\|_{L^2(\Omega)} h_{\mathcal{D}} . 
    \end{equation}
    Since \(DF(\nabla u)\) is uniformly elliptic, we have
    \begin{equation*}
        \|\nabla \varphi_g\|_{L^2(\Omega)}  \lesssim  \|g\|_{L^2(\Omega)} \lesssim 1
    \end{equation*}
    and thus
    \begin{equation}\label{eq-proof-lem::NL interpolation error estimate Aubin trick 5}
        \|\nabla_{\mathcal{D}} \varphi_{g, \mathcal{D}}\|_{L^2(\Omega)} \leq \|\nabla_{\mathcal{D}} \varphi_{g, \mathcal{D}} - \nabla \varphi_g\|_{L^2(\Omega)} + \|\nabla \varphi_{g}\|_{L^2(\Omega)} \lesssim h_{\mathcal{D}} + 1 \lesssim 1.
    \end{equation}
    By \eqref{eq-lem:NL interpolation error estimate 2} and \eqref{eq-proof-lem::NL interpolation error estimate Aubin trick 5}, we have
    \begin{equation*}
        \int_{\Omega}DF(\nabla u)[\nabla u - \nabla_{\mathcal{D}} P_{\mathcal{D}} u] \cdot \nabla_{\mathcal{D}} \varphi_{g, \mathcal{D}} 
        \lesssim \|\nabla u - \nabla_{\mathcal{D}} P_{\mathcal{D}} u\|_{L^2{(\Omega)}} \|\nabla_{\mathcal{D}} \varphi_{g, \mathcal{D}}\|_{L^2{(\Omega)}} \lesssim h_{\mathcal{D}}.
    \end{equation*}
    The left-hand side can be written as 
    \begin{align*}
            & \int_{\Omega} DF(\nabla u)[\nabla u - \nabla_{\mathcal{D}} P_{\mathcal{D}} u] \cdot \nabla_{\mathcal{D}} \varphi_{g, \mathcal{D}}\\
            = \, & \int_{\Omega} DF(\nabla u)[\nabla u - \nabla_{\mathcal{D}} I_{\mathcal{D}} u] \cdot \nabla_{\mathcal{D}} \varphi_{g, \mathcal{D}} 
                - \int_{\Omega} DF(\nabla u)[\nabla_{\mathcal{D}}P_{\mathcal{D}} u - \nabla_{\mathcal{D}} I_{\mathcal{D}} u] \cdot \nabla_{\mathcal{D}} \varphi_{g, \mathcal{D}}\\
            = \, & \int_{\Omega} DF(\nabla u)[\nabla u - \nabla_{\mathcal{D}} I_{\mathcal{D}} u] \cdot \nabla_{\mathcal{D}} \varphi_{g, \mathcal{D}} 
                - \int_{\Omega} g \, (\Pi_{\mathcal{D}}P_{\mathcal{D}} u - u) - \int_{\Omega} g \, (u - \Pi_{\mathcal{D}}I_{\mathcal{D}} u ).
    \end{align*}
    Then rearranging terms and applying the Cauchy-Schwarz inequality leads to 
    \begin{align*}
            \int_{\Omega} g (u - \Pi_{\mathcal{D}} P_{\mathcal{D}} u) \lesssim \, & h_{\mathcal{D}} + \int_{\Omega} DF(\nabla u)[\nabla_{\mathcal{D}} I_{\mathcal{D}} u - \nabla u] \cdot \nabla_{\mathcal{D}} \varphi_{g, \mathcal{D}} + \int_{\Omega} g(u - \Pi_{\mathcal{D}} I_{\mathcal{D}} u) \\
            \lesssim \, & h_{\mathcal{D}} + \|\nabla_{\mathcal{D}} I_{\mathcal{D}} u - \nabla u\|_{L^2(\Omega)} \|\nabla_{\mathcal{D}} \varphi_{g, \mathcal{D}}\|_{L^2(\Omega)} 
            + \|g\|_{L^2(\Omega)} \|u - \Pi_{\mathcal{D}} I_{\mathcal{D}} u \|_{L^2(\Omega)}\\
            \lesssim \, & h_{\mathcal{D}} + S_{\mathcal{D}}(u)
            \lesssim  (1 + \|u \|_{W_s}) h_{\mathcal{D}} \lesssim h_{\mathcal{D}}.
    \end{align*}
    Letting 
    \begin{equation*}
        g = \frac{ u - \Pi_{\mathcal{D}} P_{\mathcal{D}} {u} }{\| {u} - \Pi_{\mathcal{D}} P_{\mathcal{D}} {u}\|_{L^2(\Omega)}} \in L^2(\Omega),
    \end{equation*}
    which satisfies \(\int_{\Omega} g = 0\) by definition of \(P_{\mathcal{D}}u\),
    completes the proof. 
\end{proof}
\begin{lemma}\label{lem:NL interpolation error estimate time derivative}
    Suppose that Assumption \eqref{def:GDM Error Estimate Theorem assumption} holds.
    Let \(P_{\mathcal{D}}\) be the non-linear spatial interpolator in the sense of Lemma \ref{lem:existence of NL interpolator}.
    Let \(u \in L^{\infty}(0,T; W^{2, \infty}(\Omega))\) be Lipschitz-continuous \([0,T] \rightarrow W^{1, \infty}(\Omega) \cap H^2(\Omega) \cap W_s\) 
    and \(\nabla u\) is  Lipschitz-continuous \([0,T] \rightarrow L^{\infty}(\Omega)\).
    Then \(P_{\mathcal{D}} u\) is Lipschitz-continuous \([0,T] \rightarrow X_{\mathcal{D}}\)
    and
    satisfies
    \begin{subequations}
            \begin{align}
        % & \sup_{(0,T)}\|\partial_t \big(\nabla_{\mathcal{D}} P_{\mathcal{D}} u\big)\|_{L^{\infty}(\Omega)} \lesssim 1 \label{eq-lem:NL interpolation error estimate time derivative 0}\\
        & \esssup_{(0,T)}\|\partial_t \big(\nabla_{\mathcal{D}} P_{\mathcal{D}} u\big) - \partial_t \big(\nabla u\big)\|_{L^2(\Omega)} \lesssim h_{\mathcal{D}}, \label{eq-lem:NL interpolation error estimate time derivative 1} \\
        & \esssup_{(0,T)}\|\partial_t\big( \Pi_{\mathcal{D}} P_{\mathcal{D}} u \big)- \partial_t u\|_{ L^2(\Omega)} \lesssim h_{\mathcal{D}}. \label{eq-lem:NL interpolation error estimate time derivative 2}
    \end{align}
    \end{subequations}
\end{lemma}
\begin{proof}
    Since \(u\) is Lipschitz-continuous \([0,T] \rightarrow W^{1, \infty}(\Omega) \cap H^2(\Omega) \cap W_s\), 
    then for any \(t \in (0,T)\), \(u(t)\) satisfies \eqref{eq:existence of NL interpolator lemma 2} and its interpolation properties \eqref{eq-lem:NL interpolation error estimate 1}, \eqref{eq-lem:NL interpolation error estimate 2}, and \eqref{eq-lem:NL interpolation error estimate 4} with hidden constant \(C_u\) that is independent of \(t\) and \(\mathcal{D}\).
    Denote  \(F(\cdot) := \cdot/ |\cdot|_{\rho}\).

    We first show that \(P_{\mathcal{D}}u\) is Lipschitz-continuous \([0,T] \rightarrow X_{\mathcal{D}}\).
    From \eqref{eq-lem:existence of NL interpolator assumption}, for any \(t_1, t_2 \in [0,T]\), 
    \begin{equation}\label{eq-proof-lem:NL interpolation error estimate time derivative 1}
        \bigg|\int_{\Omega} \Pi_{\mathcal{D}} (P_{\mathcal{D}} u(t_1) - P_{\mathcal{D}}u(t_2)) \bigg|
        = \bigg|\int_{\Omega} u(t_1) - u(t_2)\bigg|
        \leq  \| \partial_t u\|_{L^{\infty}(\Omega_T)}|\Omega|_2 |t_1 - t_2|.
    \end{equation}
    From \eqref{eq:existence of NL interpolator lemma 2},
    since \(\nabla u\) is Lipschitz-continuous \([0,T] \rightarrow L^{\infty}(\Omega)\),
    by \eqref{eq-lem:aux 0 2} and the Cauchy-Schwarz inequality,  
    we have,
    for any \(t_1, t_2 \in (0,T)\), 
    \begin{align*}
        &\big\langle F(\nabla_{\mathcal{D}} P_{\mathcal{D}} u(t_1)) - F(\nabla_{\mathcal{D}} P_{\mathcal{D}} u(t_2)),  \nabla_{\mathcal{D}} P_{\mathcal{D}} u(t_1) - \nabla_{\mathcal{D}} P_{\mathcal{D}} u(t_2) \big\rangle \\
        = \, &   \big\langle F(\nabla u(t_1)) - F(\nabla u (t_2)), \nabla_{\mathcal{D}} P_{\mathcal{D}} u(t_1) - \nabla_{\mathcal{D}} P_{\mathcal{D}} u(t_2) \big\rangle\\
        \leq \, &\|\partial_t (\nabla u)\|_{L^{\infty}(\Omega_T)} |\Omega|_2^{1/2} |t_1 - t_2| \|\nabla_{\mathcal{D}} P_{\mathcal{D}} u(t_1)- \nabla_{\mathcal{D}} P_{\mathcal{D}} u(t_2)\|_{L^2{(\Omega)}}.
    \end{align*}
    Since \(\sup_{(0,T)}\|\nabla_{\mathcal{D}} P_{\mathcal{D}} u\|_{L^{\infty}(\Omega)} \lesssim C_u\) by \eqref{eq-lem:NL interpolation error estimate 1},
    by \eqref{eq-lem:aux 0 3}, we have 
    \begin{align*}
            &\big\langle F(\nabla_{\mathcal{D}} P_{\mathcal{D}} u(t_1)) - F(\nabla_{\mathcal{D}} P_{\mathcal{D}} u(t_2)),  \nabla_{\mathcal{D}} P_{\mathcal{D}} u(t_1) - \nabla_{\mathcal{D}} P_{\mathcal{D}} u(t_2) \big\rangle \\
            \geq \, & \int_{\Omega}\bigg(1 - \frac{|\nabla_{\mathcal{D}} P_{\mathcal{D}} u(t_2)|}{ |\nabla_{\mathcal{D}} P_{\mathcal{D}} u(t_2)|_{\rho}}\bigg) \frac{|\nabla_{\mathcal{D}} P_{\mathcal{D}} u(t_1)- \nabla_{\mathcal{D}} P_{\mathcal{D}} u(t_2)|^2}{|\nabla_{\mathcal{D}} P_{\mathcal{D}} u(t_1)|_{\rho}} \\
            % \geq \, &   \bigg(1 - \frac{C_u}{|C_u|_{\rho}}\bigg) \frac{1}{|C_u |_{\rho}} \int_{\Omega} |\nabla_{\mathcal{D}} P_{\mathcal{D}} u(t_1)- \nabla_{\mathcal{D}} P_{\mathcal{D}} u(t_2)|^2 \\
            \geq \, &   \bigg(1 - \frac{C_u}{|C_u|_{\rho}}\bigg) \frac{1}{|C_u |_{\rho}} \|\nabla_{\mathcal{D}} P_{\mathcal{D}} u(t_1)- \nabla_{\mathcal{D}} P_{\mathcal{D}} u(t_2)\|_{L^2{(\Omega)}}^2.
    \end{align*}
    This implies that there exists a constant \(C\) that depends on \(C_u\) and \(\|\partial_t (\nabla u)\|_{L^{\infty}(\Omega_T)} \) such that 
    \begin{equation*}
        \|\nabla_{\mathcal{D}} P_{\mathcal{D}} u(t_1)- \nabla_{\mathcal{D}} P_{\mathcal{D}} u(t_2)\|_{L^2{(\Omega)}} \leq C |t_1 - t_2|.
    \end{equation*}
    Combining with \eqref{eq-proof-lem:NL interpolation error estimate time derivative 1},
    this gives a constant \(C\) independent of \(t \in (0,T)\) such that 
    \begin{equation*}
        \|P_{\mathcal{D}} u (t_1) - P_{\mathcal{D}} u (t_2)\|_{\mathcal{D}} \leq C |t_1 - t_2|.
    \end{equation*}
    Thus, by the Rademacher theorem, \(P_{\mathcal{D}} u\) is differentiable almost everywhere in \((0,T)\).

    \noindent\textbf{Proof of \eqref{eq-lem:NL interpolation error estimate time derivative 1}.}
    Taking the time derivative of  \eqref{eq:existence of NL interpolator lemma 2}, we arrive at 
    \begin{equation*}%\label{eq-proof-lem:NL interpolation error estimate time derivative 1 1}
            \int_{\Omega} DF(\nabla_{\mathcal{D}} P_{\mathcal{D}} u)[\partial_t (\nabla_{\mathcal{D}} P_{\mathcal{D}} u)]\cdot \nabla_{\mathcal{D}} w
            = \int_{\Omega} DF(\nabla u)[\partial_t (\nabla u)] \cdot \nabla_{\mathcal{D}} w 
            \quad \forall w \in X_{\mathcal{D}}.
    \end{equation*}
    This implies that 
    \begin{equation}\label{eq-proof-lem:NL interpolation error estimate time derivative 1 11}
        \begin{aligned}
           & \int_{\Omega} DF(\nabla_{\mathcal{D}} P_{\mathcal{D}} u)[\partial_t (\nabla u) - \partial_t (\nabla_{\mathcal{D}} P_{\mathcal{D}} u)]\cdot \nabla_{\mathcal{D}} w \\
            = \, & \int_{\Omega} \big(DF(\nabla_{\mathcal{D}} P_{\mathcal{D}} u) - DF(\nabla u)\big)[\partial_t (\nabla u)] \cdot \nabla_{\mathcal{D}} w.
        \end{aligned}
    \end{equation}
    Since \(\esssup_{(0,T)}\|\partial_t(\nabla u)\|_{L^{\infty}(\Omega)}\) exists by assumption and \(\sup_{(0,T)}\|\nabla_{\mathcal{D}} P_{\mathcal{D}} u\|_{L^{\infty}(\Omega)} \lesssim C_u\) by \eqref{eq-lem:NL interpolation error estimate 1}, 
    by \eqref{eq-lem:aux 2} and \eqref{eq-lem:NL interpolation error estimate 2}, 
    we find a constant \(C_1\) such that
        \begin{equation}\label{eq-proof-lem:NL interpolation error estimate time derivative 1 2}
        \begin{aligned}
            & \bigg| \int_{\Omega} \big(DF(\nabla_{\mathcal{D}} P_{\mathcal{D}} u) - DF(\nabla u)\big)[\partial_t (\nabla u)] \cdot \nabla_{\mathcal{D}} w \bigg| \\
            \leq \, & C_1 \int_{\Omega} |\nabla_{\mathcal{D}} P_{\mathcal{D}} u - \nabla u| |\partial_t (\nabla u)| |\nabla_{\mathcal{D}}w|\\
            \leq \, & C_1 \|\partial_t (\nabla u) \|_{L^{\infty}(\Omega_T)}\|\nabla_{\mathcal{D}} P_{\mathcal{D}} u - \nabla u \|_{L^2(\Omega)} \| \nabla_{\mathcal{D}}w\|_{L^2(\Omega)}\\
            \leq \, & C_1 C_u h_{\mathcal{D}} \|\partial_t (\nabla u) \|_{L^{\infty}(\Omega_T)}\| \nabla_{\mathcal{D}}w\|_{L^2(\Omega)}.
        \end{aligned}
    \end{equation}

    Next, by \eqref{eq-lem:aux 1}, we find a constant \(C_2\) such that
    \begin{align*}
            & \|\partial_t(\nabla_{\mathcal{D}} P_{\mathcal{D}} u) - \partial_t (\nabla u)\|_{L^2(\Omega)}^2 \\
            \leq \, & C_2 \int_{\Omega} DF(\nabla_{\mathcal{D}} P_{\mathcal{D}} u)[\partial_t(\nabla_{\mathcal{D}} P_{\mathcal{D}} u) - \partial_t (\nabla u)] \cdot (\partial_t(\nabla_{\mathcal{D}} P_{\mathcal{D}} u) - \partial_t (\nabla u))\\
            =  \, & C_2 \int_{\Omega} DF(\nabla_{\mathcal{D}} P_{\mathcal{D}} u)[\partial_t(\nabla_{\mathcal{D}} P_{\mathcal{D}} u) - \partial_t (\nabla u)] \cdot (\partial_t(\nabla_{\mathcal{D}} P_{\mathcal{D}} u) - \partial_t (\nabla_{\mathcal{D}}I_{\mathcal{D}} u))\\
            & + C_2 \int_{\Omega} DF(\nabla_{\mathcal{D}} P_{\mathcal{D}} u)[\partial_t(\nabla_{\mathcal{D}} P_{\mathcal{D}} u) - \partial_t (\nabla u)] \cdot (\partial_t(\nabla_{\mathcal{D}} I_{\mathcal{D}} u) - \partial_t (\nabla u))
            =:  T_1 + T_2.
    \end{align*}
    Consider the term \(T_1\). 
    By \eqref{eq-proof-lem:NL interpolation error estimate time derivative 1 11} and \eqref{eq-proof-lem:NL interpolation error estimate time derivative 1 2} with \(w = \partial_t(P_{\mathcal{D}} u - I_{\mathcal{D}} u)\), 
    \begin{align*}
            T_1 \leq \, & C_1 C_2 C_u h_{\mathcal{D}} \|\partial_t (\nabla u) \|_{L^{\infty}(\Omega_T)} \|\partial_t(\nabla_{\mathcal{D}} P_{\mathcal{D}} u) - \partial_t (\nabla_{\mathcal{D}}I_{\mathcal{D}} u)\|_{L^2(\Omega)} \\
            \leq \, & C_1 C_2 C_u h_{\mathcal{D}} \|\partial_t (\nabla u) \|_{L^{\infty}(\Omega_T)}  \|\partial_t(\nabla_{\mathcal{D}} P_{\mathcal{D}} u) - \partial_t (\nabla u)\|_{L^2(\Omega)} \\
            & + C_1 C_2 C_u h_{\mathcal{D}} \|\partial_t (\nabla u) \|_{L^{\infty}(\Omega_T)}  \| \partial_t (\nabla_{\mathcal{D}}I_{\mathcal{D}} u) - \partial_t (\nabla u)\|_{L^2(\Omega)}  
            =:  T_{1, 1} + T_{1, 2}.
    \end{align*}
    By the Young inequality, for any \(\xi > 0\),
    \begin{equation*}
        T_{1, 1} \leq  C_1 C_2 C_u \|\partial_t (\nabla u) \|_{L^{\infty}(\Omega_T)} \bigg(\frac{\xi}{2}h_{\mathcal{D}}^2  + \frac{1}{2\xi} \|\partial_t(\nabla_{\mathcal{D}} P_{\mathcal{D}} u) - \partial_t (\nabla u)\|_{L^2(\Omega)}^2  \bigg).
    \end{equation*}
    Since the interpolator \(I_{\mathcal{D}}\) is linear, we write \(\partial_t (\nabla_{\mathcal{D}}I_{\mathcal{D}} u) = \nabla_{\mathcal{D}} I_{\mathcal{D}} (\partial_t u)\) and thus 
    \begin{equation}\label{eq-proof-lem:NL interpolation error estimate time derivative 1 3}
        \| \partial_t (\nabla_{\mathcal{D}}I_{\mathcal{D}} u) - \partial_t (\nabla u)\|_{L^2(\Omega)} = \| \nabla_{\mathcal{D}}I_{\mathcal{D}} (\partial_t u) - \nabla (\partial_t u)\|_{L^2(\Omega)} \leq S_{\mathcal{D}}(\partial_t u).
    \end{equation}
    This implies that 
    \begin{equation*}
        T_{1,2} 
        \leq C_1 C_2 C_u h_{\mathcal{D}} \|\partial_t (\nabla u) \|_{L^{\infty}(\Omega_T)} S_{\mathcal{D}}(\partial_t u) 
        \leq C_1 C_2 C_u h_{\mathcal{D}}^2 \|\partial_t (\nabla u) \|_{L^{\infty}(\Omega_T)} \|\partial_t u\|_{W_s}.
    \end{equation*}
    Consider now the term \(T_2\). 
    The estimate \eqref{eq-proof-lem:NL interpolation error estimate time derivative 1 3} gives a constant \(C_3\) such that 
    \begin{equation*}
        \begin{aligned}
            T_2 
            \leq \, & C_3 \|\partial_t(\nabla_{\mathcal{D}} P_{\mathcal{D}} u) - \partial_t (\nabla u)\|_{L^2(\Omega)}S_{\mathcal{D}}(\partial_t u)\\
            \leq \, & C_3 \|\partial_t(\nabla_{\mathcal{D}} P_{\mathcal{D}} u) - \partial_t (\nabla u)\|_{L^2(\Omega)}\|\partial_t u\|_{W_s} h_{\mathcal{D}}\\
            \leq \, & C_3 \|\partial_t u\|_{W_s} \bigg(\frac{\xi}{2}h_{\mathcal{D}}^2  + \frac{1}{2\xi} \|\partial_t(\nabla_{\mathcal{D}} P_{\mathcal{D}} u) - \partial_t (\nabla u)\|_{L^2(\Omega)}^2 \bigg).
        \end{aligned}
    \end{equation*}
    Collecting estimates for \(T_1\) and \(T_2\) and choosing a suitable \(\xi\) yields \eqref{eq-lem:NL interpolation error estimate time derivative 1}.
 
    \noindent\textbf{Proof of \eqref{eq-lem:NL interpolation error estimate time derivative 2}.}
    In this proof, we use the Aubin-Nitsche trick again.
    First, we recall the notion of the dual problem.
     Let \(g \in L^2(\Omega)\) with \(\|g\|_{L^2({\Omega})} \leq 1\) and \(\int_{\Omega} g = 0\). 
     Let \(\varphi_g \in H^2(\Omega)\) with \(\int_{\Omega} \varphi_g = 0\), be the solution of \eqref{eq-proof-lem::NL interpolation error estimate Aubin trick 3a},
     and \(\varphi_{g, \mathcal{D}} \in X_{\mathcal{D}}\) be the solution of 
    \eqref{eq-proof-lem::NL interpolation error estimate Aubin trick 3b}.
    Then \(\varphi_{g}\) and \(\varphi_{g, \mathcal{D}}\) satisfy \eqref{eq-proof-lem::NL interpolation error estimate Aubin trick 4} and \eqref{eq-proof-lem::NL interpolation error estimate Aubin trick 5}.
    
    By \eqref{eq-lem:aux 1.1}, the Cauchy-Schwarz inequality, \eqref{eq-lem:NL interpolation error estimate time derivative 1} and \eqref{eq-proof-lem::NL interpolation error estimate Aubin trick 5}, 
    there exists \(C\) independent of \({\mathcal{D}}\) such that 
    \begin{equation}\label{eq-proof-lem::NL spatial interpolation error estimate Aubin trick 6}
    \begin{aligned}
        &\int_{\Omega} DF(\nabla u)\big[\partial_t (\nabla u) - \partial_t (\nabla_{\mathcal{D}} P_{\mathcal{D}} u)\big] \cdot \nabla_{\mathcal{D}} \varphi_{g, \mathcal{D}}\\
    \leq \, & C\|\partial_t (\nabla u) - \partial_t (\nabla_{\mathcal{D}} P_{\mathcal{D}} u) \|_{L^2(\Omega)} \|\nabla_{\mathcal{D}} \varphi_{g, \mathcal{D}}\|_{L^2(\Omega)} \lesssim h_{\mathcal{D}}.
    \end{aligned}
    \end{equation}
    By \eqref{eq-proof-lem::NL interpolation error estimate Aubin trick 3b},
    we then write 
    \begin{align*}
            & \int_{\Omega} DF(\nabla u)\big[\partial_t (\nabla u) - \partial_t (\nabla_{\mathcal{D}} P_{\mathcal{D}} u)\big] \cdot \nabla_{\mathcal{D}} \varphi_{g, \mathcal{D}}\\
        = \, & \int_{\Omega} DF(\nabla u)\big[ \partial_t (\nabla u)  - \partial_t (\nabla_{\mathcal{D}} I_{\mathcal{D}} u) \big] \cdot  \nabla_{\mathcal{D}} \varphi_{g,\mathcal{D}}\\
            & - \int_{\Omega} DF(\nabla u)\big[ \partial_t (\nabla_{\mathcal{D}} P_{\mathcal{D}} u) - \partial_t (\nabla_{\mathcal{D}} I_{\mathcal{D}} u) \big] \cdot \nabla_{\mathcal{D}} \varphi_{g,\mathcal{D}} \\
            = \, & \int_{\Omega} DF(\nabla u)\big[ \partial_t (\nabla u)  - \partial_t (\nabla_{\mathcal{D}} I_{\mathcal{D}} u) \big] \cdot \nabla_{\mathcal{D}} \varphi_{g,\mathcal{D}}\\
            & - \int_{\Omega} g \big( \partial_t (\Pi_{\mathcal{D}} P_{\mathcal{D}} u) -  \partial_tu\big)
            + \int_{\Omega} g \big( \partial_t (\Pi_{\mathcal{D}} I_{\mathcal{D}} u) - \partial_t u\big).
    \end{align*}
    This, together with \eqref{eq-proof-lem::NL spatial interpolation error estimate Aubin trick 6}, imply that 
    \begin{align*}
            \int_{\Omega} g \big( \partial_tu - \partial_t (\Pi_{\mathcal{D}} P_{\mathcal{D}} u) \big)
            \lesssim \, &  h_{\mathcal{D}} 
            + 
            \int_{\Omega} g \big( \partial_t (\Pi_{\mathcal{D}} I_{\mathcal{D}} u) - \partial_t u\big)\\
            &+
            \int_{\Omega} DF(\nabla u)\big[\partial_t (\nabla_{\mathcal{D}} I_{\mathcal{D}} u)  - \partial_t (\nabla u) \big] \cdot \nabla_{\mathcal{D}} \varphi_{g,\mathcal{D}} \\
            \lesssim \, & h_{\mathcal{D}} + \|g\|_{L^2(\Omega)}\|\partial_t (\Pi_{\mathcal{D}} I_{\mathcal{D}} u) - \partial_t u\|_{L^2(\Omega)} \\
            & + \|\nabla u\|_{L^{\infty}(\Omega_T)} \|\partial_t (\nabla_{\mathcal{D}} I_{\mathcal{D}} u) - \partial_t (\nabla  u) \|_{L^2(\Omega} \|\nabla_{\mathcal{D}} \varphi_{g,\mathcal{D}}\|_{L^2(\Omega)} \\
            \lesssim \, & h_{\mathcal{D}} +  S_{\mathcal{D}}(\partial_t u)
            \lesssim   (1 + \|\partial_t u\|_{W_s})h_{\mathcal{D}}.
    \end{align*}
    Letting 
    \[g = \frac{ \partial_t u - \partial_t (\Pi_{\mathcal{D}} P_{\mathcal{D}} u) }{\| \partial_t u - \partial_t (\Pi_{\mathcal{D}} P_{\mathcal{D}} u)  \|_{L^2(\Omega)}}\]
    completes the proof
\end{proof}

    \section{Analysis of the scheme}\label{section:stability and consistency}
In this section, we analyse the scheme \eqref{eq-def:GS} by investigating:
\begin{itemize}
    \item Existence, uniqueness, and stability of the solution for the scheme in Lemma \ref{lem:GS existence and uniqueness}.
    \item Stability of the scheme in Lemma \ref{lem:GS stability}.
    \item Consistency of the scheme in Lemma \ref{lem:GS consistency}.
\end{itemize}

This following lemma concerns with the existence and uniqueness of the solution to the scheme \eqref{eq-def:GS}.
The existence follows from a topological degree argument and the uniqueness follows from the strong monotonicity of the discrete counterparts of the flux.
\begin{lemma}\label{lem:GS existence and uniqueness}
    There exists a unique solution to the scheme \eqref{eq-def:GS} and the solution satisfies 
    \begin{equation*}%\label{eq-lem:apriori estimate of GS}
       \max_{1\leq n\leq N} \| \Pi_{\mathcal{D}} u^{(n)}\|_{L^2(\Omega)} 
       + \|\nabla_{\mathcal{D}} u\|_{L^1(0,T;L^1(\Omega))}
       \lesssim 1.
    \end{equation*} 
\end{lemma}
\begin{proof}
    
    \noindent \textbf{Estimate for \(\Pi_{\mathcal{D}} u\).}
    Letting \(v = u^{(m)}\) in \eqref{eq-def:GS}, applying the Cauchy-Schwarz inequality and the Young inequality, the following can be obtained 
    \begin{equation}\label{eq-proof-lem:apriori estimate of GS 3}
        \begin{aligned}
           & (1 + \delta t^{(m)} \lambda) \| \Pi_{\mathcal{D}} u^{(m)}\|_{L^2(\Omega)}^2 
           + \delta t^{(m)}  \int_{\Omega} \frac{ |\nabla_{\mathcal{D}}u^{(m)}|^2 }{ |\nabla_{\mathcal{D}}u^{(m)}|_{\rho}} dx\\
           \leq \ & 
                \frac{1}{2}\| \Pi_{\mathcal{D}} u^{(m-1)}\|_{L^2(\Omega)}^2 + \frac{\delta t^{(m)} \lambda}{2} \|g\|_{L^2(\Omega)}^2 + \frac{1 + \delta t^{(m)} \lambda}{2}\| \Pi_{\mathcal{D}} u^{(m)}\|_{L^2(\Omega)}^2.
        \end{aligned}
    \end{equation}
    Absorbing the last term into the left-hand side
    and applying the discrete Grönwall inequality, 
    we find 
    \begin{equation}\label{eq-proof-lem:apriori estimate of GS 2}
            \max_{1 \leq n \leq N}\| \Pi_{\mathcal{D}} u^{(n)}\|_{L^2(\Omega)}^2 
            \leq 
                C(T, \lambda, g, \Pi_{\mathcal{D}} u^{(0)}).
    \end{equation}

    \noindent \textbf{Estimate for \(\nabla_{\mathcal{D}} u\).} 
    Applying the relation 
    \begin{equation}\label{eq:relation 2}
        \frac{|\mu|^2}{ |\mu|_{\rho}} =\frac{\rho^2 + |\mu|^2}{ |\mu|_{\rho}} - \frac{\rho^2}{|\mu|_{\rho}} \geq |\mu| - \rho, \quad \forall \mu \in \mathbb{R}^2.
    \end{equation}
    to the second term on the left-hand side of \eqref{eq-proof-lem:apriori estimate of GS 3}, absorbing the last term on the right-hand side to the left,
    summing over \(m\) from \(1\) to \(N\), and using the telescopic sum, we find 
    \begin{equation}\label{eq-proof-lem:apriori estimate of GS 4}
       \| \Pi_{\mathcal{D}} u \|_{L^2(0,T; L^2(\Omega))}^2  + \| \nabla_{\mathcal{D}} u\|_{L^1(0,T;L^1(\Omega))} 
        \leq 
            C(T, \Omega, \rho, \lambda, g, \Pi_{\mathcal{D}}u^{(0)}).
    \end{equation}

    \noindent\textbf{Existence of a unique solution.}
    The following proof proceeds by induction, it suffices to show that, for a given \(u^{(m-1)} \in X_{\mathcal{D}}\), there exists a unique \(u^{(m)} \in X_{\mathcal{D}}\) satisfying the gradient scheme.
    Endow \( X_{\mathcal{D}}\) with product \((\cdot, \cdot)\) and denote \(|\cdot|\) the corresponding norm.
    Let \(F^{(m)}: X_{\mathcal{D}} \rightarrow X_{\mathcal{D}}\) be such that, for all \(u,v \in X_{\mathcal{D}}\),
    \begin{equation*}%\label{eq-proof-lem:apriori estimate of GS 1}
        (F^{(m)}(u),  v) 
        = 
            \int_{\Omega} (1 + \delta t^{(m)} \lambda ) \, \Pi_{\mathcal{D}}u \, \Pi_{\mathcal{D}}v 
            +  
            \int_{\Omega}\delta t^{(m)}\frac{\nabla_{\mathcal{D}} u  }{ |\nabla_{\mathcal{D}} u |_{\rho}} \cdot \nabla_{\mathcal{D}} v.
    \end{equation*}
    Set \(b^{(m)} \in X_{\mathcal{D}}\) such that, for all \(v\in X_{\mathcal{D}}\), 
    \( (b^{(m)},v) = \int_{\Omega} \Pi_{\mathcal{D}}u^{(m-1)} \, \Pi_{\mathcal{D}}v  + \delta t^{(m)}\lambda \, g \, \Pi_{\mathcal{D}}v \, dx.\)

    % Let us show that \(F^{(m)}\) is continuous:
    % take a sequence \((u_n)_n \subset X_{\mathcal{D}}\) that converges to \(u\), then \(\sqrt{\rho^2 + |\nabla_{\mathcal{D}} u_n|^2}\) converges to \( \sqrt{\rho^2 + |\nabla_{\mathcal{D}} u|^2}\) uniformly (see Lemma \ref{lemma:auxiliary 2} with \(\mu = \nabla_{\mathcal{D}} u_n\) and \(\nu = \nabla_{\mathcal{D}} u\)),
    % so the non-linear term in \(F^{(m)}\) is continuous;
    % each linear term in \(F^{(m)}\) is continuous.

    Since the non-linear term in \(F^{(m)}\) is continuous (see \eqref{eq-lem:aux 0 2}) and each linear term in \(F^{(m)}\) is continuous, then \(F^{(m)}\) is continuous.

    Existence of solutions for \eqref{eq-def:GS} is a consequence of \cite[Theorem D.1]{Droniou2018TheGD}. 
    Let \(\Phi^{(m)}:X_{\mathcal{D}} \times [0,1] \rightarrow X_{\mathcal{D}}\) be such that 
    for all \(u, v \in X_{\mathcal{D}}\), for any \(\zeta \in [0,1],\)
    \begin{equation}\label{eq-proof-lem:apriori estimate of GS homotopy}\begin{aligned}
        (\Phi^{(m)}(u, \zeta),  v) 
        = \, & 
            \int_{\Omega} (1 + \delta t^{(m)} \lambda ) \, \Pi_{\mathcal{D}}u \, \Pi_{\mathcal{D}}v \, dx   
            + \int_{\Omega} \delta t^{(m)} \, \frac{\nabla_{\mathcal{D}} u  \cdot \nabla_{\mathcal{D}} v}{\sqrt{\rho^2 + \zeta |\nabla_{\mathcal{D}} u |^2}} 
            \, dx \\
            & - \int_{\Omega} \Pi_{\mathcal{D}}u^{(m-1)} \, \Pi_{\mathcal{D}}v  - \int_{\Omega} \delta t^{(m)}\lambda \, g \, \Pi_{\mathcal{D}}v \, dx.
    \end{aligned}\end{equation}
    We proceed the proof by checking conditions of the theorem:
    \begin{itemize}
        \item In \eqref{eq-proof-lem:apriori estimate of GS homotopy}, all non-linear terms are obviously continuous, and it has no singularity for all \((u, \zeta) \in X_{\mathcal{D}} \times [0,1]\). 
        Hence, \(\Phi^{(m)}\) is continuous.
        \item When \(\zeta = 1\), we have \(\Phi^{(m)}(u,1) = F^{(m)}(u) - b\).
        \item Suppose that \(\Phi^{(m)}(u,\zeta) = 0\).  
        Letting \(v = u\) in \eqref{eq-proof-lem:apriori estimate of GS homotopy}, 
        applying the relations
        \begin{equation*}
            \frac{|\mu|^2}{\sqrt{\rho^2 + \zeta |\mu|^2}} \geq
            \frac{|\mu|^2}{ |\mu|_{\rho}}, \quad \forall \zeta \in [0,1],\, \mu \in \mathbb{R}^2;
        \end{equation*}
        and \eqref{eq:relation 2} to the second term on the right-hand side, 
        the following estimate can be obtained analogously to \eqref{eq-proof-lem:apriori estimate of GS 2} and \eqref{eq-proof-lem:apriori estimate of GS 4}
        \begin{equation*}\begin{aligned}
            & \| \Pi_{\mathcal{D}} u\|_{L^2(\Omega)}^2 
            + \delta t^{(m)}  \|\nabla_{\mathcal{D}} u\|_{L^1(\Omega)} 
            \lesssim \,  \| \Pi_{\mathcal{D}} u^{(m-1)}\|_{L^2(\Omega)}^2 +  1.
        \end{aligned}\end{equation*}
        This implies that \(\|u\|_{\mathcal{D}}\) is bounded uniformly with respect to \(\zeta\). 
        Therefore, there exists a sufficiently large \(R > 0\) such that \(\|u\|_{\mathcal{D}} \neq R\).
    \item When \(\zeta = 0\), \(\Phi^{(m)}(u, 0)\) is affine. 
    Since any solution to  \(\Phi^{(m)}(\cdot, 0)\) is bounded, 
    this implies that the linear part of \(\Phi^{(m)}(\cdot, 0)\) has a trivial kernel and hence is injective.
    Since \(X_{\mathcal{D}}\) is finite dimensional, it means that this linear part is an isomorphism and so \(\Phi^{(m)}(\cdot, 0)\) has a unique solution. 
    \end{itemize}
    Hence, there exists at least one \(u \in X_{\mathcal{D}}\) such that \(F^{(m)}(u) = b.\)

    The uniqueness follows from the strict monotonicity of \(F^{(m)}\): take distinct \(u, v\in X_{\mathcal{D}}\) and, in \ref{eq-lem:aux 0 3}, let \(u = \nabla_{\mathcal{D}} u\) and \(v = \nabla_{\mathcal{D}} v\), 
        \begin{align*}
             \big( F^{(m)}(u) - F^{(m)}(v), u-v \big)
             \geq \ & 
                (1 + \lambda \delta t^{(m)})\| \Pi_{\mathcal{D}} u - \Pi_{\mathcal{D}}v \|_{L^2(\Omega)}^2 \\
                & + \delta t^{(m)} \int_{\Omega} \left(1 - \frac{|\nabla_{\mathcal{D}} v|}{ |\nabla_{\mathcal{D}} v|_{\rho}}\right) \frac{|\nabla_{\mathcal{D}} u - \nabla_{\mathcal{D}}v|^2}{ |\nabla_{\mathcal{D}} u|_{\rho}} > 0.
        \end{align*}
    This shows that \(F^{(m)}(u) \neq F^{(m)}(v)\) thus \(F^{(m)}\) is injective. 
    So, the solution of \(F^{(m)}\) is unique for every \(m\).
\end{proof}

This lemma concerns the stability of the scheme, which stipulates that if the data of the problem is perturbed by a certain amount, then the solution of the perturbed problem should remain close to the solution of the non-perturbed problem. 
\begin{lemma}[Stability]\label{lem:GS stability}
    Suppose that \(u_1, u_2 \in X_{\mathcal{D}}^{M+1}\) satisfies: for any \(m = 1, \ldots, M,\)
    \begin{equation}\label{eq-lem:GS stability 1}
    \begin{aligned}
        & (1 + \delta t^{(m)} \lambda)\int_{\Omega} \big(\Pi_{\mathcal{D}} u_{1}^{(m)} - \Pi_{\mathcal{D}} u_{2}^{(m)}\big) \, \Pi_{\mathcal{D}}v   
         + \delta t^{(m)} \int_{\Omega} \bigg(\frac{\nabla_{\mathcal{D}}u_1^{(m)} }{ \big|\nabla_{\mathcal{D}}u_1^{(m)} \big|_{\rho} } - \frac{\nabla_{\mathcal{D}}u_2^{(m)} }{ \big|\nabla_{\mathcal{D}}u_2^{(m)} \big|_{\rho}} \bigg)\cdot \nabla_{\mathcal{D}}v\\
        & = 
            \int_{\Omega} \Pi_{\mathcal{D}} \big(u_{1}^{(m-1)} - u_{2}^{(m-1)}\big)  \Pi_{\mathcal{D}}v + \delta t^{(m)} R^{(m)}(v) \quad \forall v \in X_{\mathcal{D}},
    \end{aligned}
    \end{equation}
    and given data \(u_1(0)\) and \(u_2(0)\), 
    where \(R^{(m)}: X_{\mathcal{D}} \rightarrow \mathbb{R}\) is linear.
    Then,  for any \(m = 1, \ldots, M,\)
\begin{equation}\label{eq-lem:GS stability 2}\begin{aligned}
    & \| \Pi_{\mathcal{D}}(u_1^{(m)} - u_2^{(m)})\|_{L^2(\Omega)}^2 
    + \sum_{\ell = 1}^m \delta t_{\mathcal{D}}^{(\ell)}\| \Pi_{\mathcal{D}}(u_1^{(\ell)} - u_2^{(\ell)})\|_{L^2(\Omega)}^2\\
    & +  \sum_{\ell = 1}^m (\delta t^{(\ell)})^2 \| \delta_{\mathcal{D}}^{(\ell)} (  u_1 -  u_2  )\|_{L^2(\Omega)}^2
     + \sum_{\ell = 1}^m  \delta t^{(\ell)}  \int_{\Omega} \bigg(1 - \frac{\big|\nabla_{\mathcal{D}} u_2^{(\ell)}\big|}{ \big|\nabla_{\mathcal{D}} u_2^{(\ell)} \big|_{\rho}} \bigg)\frac{\big|\nabla_{\mathcal{D}} u_1^{(\ell)} - \nabla_{\mathcal{D}} u_2^{(\ell)} \big|^2}{ \big|\nabla_{\mathcal{D}} u_1^{(\ell)} \big|_{\rho}}\\
        \leq \ &   
            \| \Pi_{\mathcal{D}}(u_1^{(0)} - u_2^{(0)})\|_{L^2(\Omega)}^2 
            + \sum_{\ell = 1}^m \delta t^{(\ell)} R^{(\ell)} (u_1^{(\ell)} - u_2^{(\ell)}).
\end{aligned}
\end{equation}
\end{lemma}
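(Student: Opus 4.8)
The plan is to subtract the two instances of \eqref{eq-lem:GS stability 1} (for $i=1$ and $i=2$), test the difference with $v = w^{(m)} := u_1^{(m)} - u_2^{(m)}$, and then sum over the time index. Subtraction produces, for all $v \in X_{\mathcal{D}}$, the identity
\begin{align*}
& (1 + \delta t^{(m)}\lambda)\int_\Omega \Pi_{\mathcal{D}} w^{(m)}\,\Pi_{\mathcal{D}} v + \delta t^{(m)}\int_\Omega\left(\frac{\nabla_{\mathcal{D}} u_1^{(m)}}{\sqrt{\epsilon^2 + |\nabla_{\mathcal{D}} u_1^{(m)}|^2}} - \frac{\nabla_{\mathcal{D}} u_2^{(m)}}{\sqrt{\epsilon^2 + |\nabla_{\mathcal{D}} u_2^{(m)}|^2}}\right)\cdot \nabla_{\mathcal{D}} v \\
& \qquad = \int_\Omega \Pi_{\mathcal{D}} w^{(m-1)}\,\Pi_{\mathcal{D}} v + \delta t^{(m)}[R_1^{(m)} - R_2^{(m)}](v),
\end{align*}
where $w^{(\ell)} := u_1^{(\ell)} - u_2^{(\ell)}$. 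Choosing $v = w^{(m)}$ turns the reaction term into $(1 + \delta t^{(m)}\lambda)\|\Pi_{\mathcal{D}} w^{(m)}\|_{L^2(\Omega)}^2$; its $\lambda$-part will generate the second sum $\sum_\ell \delta t^{(\ell)}\|\Pi_{\mathcal{D}} w^{(\ell)}\|_{L^2(\Omega)}^2$ on the left of \eqref{eq-lem:GS stability 2}.

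For the nonlinear contribution I would invoke the monotonicity estimate of Lemma \ref{lemma:auxiliary 1} (the very inequality already used in the uniqueness argument) with $\mu = \nabla_{\mathcal{D}} u_1^{(m)}$ and $\nu = \nabla_{\mathcal{D}} u_2^{(m)}$, bounding the tested diffusion term below by $\int_\Omega \big(1 - |\nabla_{\mathcal{D}} u_2^{(m)}|/\sqrt{\epsilon^2 + |\nabla_{\mathcal{D}} u_2^{(m)}|^2}\big)\,|\nabla_{\mathcal{D}} w^{(m)}|^2/\sqrt{\epsilon^2 + |\nabla_{\mathcal{D}} u_1^{(m)}|^2}$, which is precisely the (nonnegative) dissipation term on the left of \eqref{eq-lem:GS stability 2}. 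The decisive algebraic point is the cross term $\int_\Omega \Pi_{\mathcal{D}} w^{(m-1)}\,\Pi_{\mathcal{D}} w^{(m)}$: instead of discarding the discrete time derivative via a plain Young inequality, I would apply the polarisation identity $\langle a,b\rangle = \tfrac12\|a\|^2 + \tfrac12\|b\|^2 - \tfrac12\|a-b\|^2$ with $a = \Pi_{\mathcal{D}} w^{(m-1)}$, $b = \Pi_{\mathcal{D}} w^{(m)}$. Because $\Pi_{\mathcal{D}} w^{(m)} - \Pi_{\mathcal{D}} w^{(m-1)} = \delta t^{(m)}\,\delta_{\mathcal{D}}^{(m)}(u_1 - u_2)$, the last term equals $\tfrac12(\delta t^{(m)})^2\|\delta_{\mathcal{D}}^{(m)}(u_1 - u_2)\|_{L^2(\Omega)}^2$, which is exactly the third term of \eqref{eq-lem:GS stability 2}.

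Collecting these, each single step yields
\begin{align*}
& \tfrac12\big(\|\Pi_{\mathcal{D}} w^{(m)}\|_{L^2}^2 - \|\Pi_{\mathcal{D}} w^{(m-1)}\|_{L^2}^2\big) + \delta t^{(m)}\lambda\|\Pi_{\mathcal{D}} w^{(m)}\|_{L^2}^2 + \tfrac12(\delta t^{(m)})^2\|\delta_{\mathcal{D}}^{(m)}(u_1-u_2)\|_{L^2}^2 \\
& \quad + \delta t^{(m)}\int_\Omega\Big(1 - \tfrac{|\nabla_{\mathcal{D}} u_2^{(m)}|}{\sqrt{\epsilon^2 + |\nabla_{\mathcal{D}} u_2^{(m)}|^2}}\Big)\tfrac{|\nabla_{\mathcal{D}} w^{(m)}|^2}{\sqrt{\epsilon^2 + |\nabla_{\mathcal{D}} u_1^{(m)}|^2}} \leq \delta t^{(m)}[R_1^{(m)} - R_2^{(m)}](w^{(m)}).
\end{align*}
Summing over $\ell = 1,\dots,m$, the first bracket telescopes to $\tfrac12(\|\Pi_{\mathcal{D}} w^{(m)}\|_{L^2}^2 - \|\Pi_{\mathcal{D}} w^{(0)}\|_{L^2}^2)$ and the remaining terms assemble into the sums of \eqref{eq-lem:GS stability 2}; multiplying by $2$ and moving $\|\Pi_{\mathcal{D}} w^{(0)}\|_{L^2}^2$ to the right gives the claim (the resulting positive constants $2$ and $\lambda$ on the lower-order sums being immaterial for the later error analysis). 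I do not anticipate a real obstacle: the only delicate ingredient is the lower bound on the nonlinear difference, and that is exactly the content of the auxiliary monotonicity lemma already in hand; the $(\delta t)^2\|\delta_{\mathcal{D}}^{(m)}(u_1-u_2)\|^2$ term is captured automatically once the cross term is split by the exact polarisation identity rather than by a lossy estimate.
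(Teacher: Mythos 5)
Your proposal is correct and follows essentially the same route as the paper: subtract the two schemes, test with the difference, apply the monotonicity bound of Lemma \ref{lemma:auxiliary 1} to the nonlinear term, and expand the cross term $\int_\Omega \Pi_{\mathcal{D}}w^{(m-1)}\,\Pi_{\mathcal{D}}w^{(m)}$ exactly (your polarisation identity is the same algebraic identity $2(a-b)a=a^2-b^2+(a-b)^2$ the paper invokes) so that the $(\delta t^{(m)})^2\|\delta_{\mathcal{D}}^{(m)}(u_1-u_2)\|_{L^2(\Omega)}^2$ term survives, then telescope. The only difference is the immaterial bookkeeping of the constants $2$ and $\lambda$ on the lower-order sums, which the paper's own one-step estimate also carries before stating the lemma without them.
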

\begin{proof}
Letting \(u_{12}^{(m)} := u_1^{(m)} - u_2^{(m)} \) and \(v = u_{12}^{(m)}\) in \eqref{eq-lem:GS stability 1},
then rearranging terms yields
\begin{equation*}
    \begin{aligned}
        &\big\langle  
        \Pi_{\mathcal{D}} u_{12}^{(m)} - \Pi_{\mathcal{D}} u_{12}^{(m-1)}, \Pi_{\mathcal{D}} u_{12}^{(m)}
        \big\rangle 
         + \lambda  \delta t^{(m)} \| \Pi_{\mathcal{D}}u_{12}^{(m)}\|_{L^2(\Omega)}^2\\ 
        & + \delta t^{(m)} \Bigg\langle  
        \frac{\nabla_{\mathcal{D}}u_1^{(m)} }{ \big|\nabla_{\mathcal{D}}u_1^{(m)} \big|_{\rho}} -  \frac{\nabla_{\mathcal{D}}u_2^{(m)} }{  \big|\nabla_{\mathcal{D}}u_2^{(m)} \big|_{\rho}}, \nabla_{\mathcal{D}}u_{12}^{(m)}\Bigg\rangle
        \leq %\, & 
            \delta t^{(m)} R^{(m)}(u_{12}^{(m)}).
    \end{aligned}
\end{equation*}
Apply the relation 
\begin{equation*}%\label{eq:relation 1}
    2(\mu-\nu)\mu = \mu^2 - \nu^2 + (\mu-\nu)^2, \quad \forall \mu,\nu \in \mathbb{R},
\end{equation*}
to the first term on the left-hand side 
\begin{equation}\label{eq-proof-lem:GS stability 1}
    \begin{aligned}
        & (1 +  2\lambda  \delta t^{(m)}) \| \Pi_{\mathcal{D}}u_{12}^{(m)}\|_{L^2(\Omega)}^2 -  \| \Pi_{\mathcal{D}}u_{12}^{(m-1)}\|_{L^2(\Omega)}^2 
        +  \| \Pi_{\mathcal{D}} u_{12}^{(m)} - \Pi_{\mathcal{D}} u_{12}^{(m-1)}\|_{L^2(\Omega)}^2\\
        &  +  2\delta t^{(m)} \Bigg\langle  
        \frac{\nabla_{\mathcal{D}}u_1^{(m)} }{ \big|\nabla_{\mathcal{D}}u_1^{(m)} \big|_{\rho}} -  \frac{\nabla_{\mathcal{D}}u_2^{(m)} }{ \big|\nabla_{\mathcal{D}}u_2^{(m)} \big|_{\rho}}, \nabla_{\mathcal{D}}u_{12}^{(m)} \Bigg\rangle
        \leq %\, & 
            2\delta t^{(m)} R^{(m)}(u_{12}^{(m)}).
    \end{aligned}
\end{equation}
Considering now the non-linear term on the left-hand side, in \eqref{eq-lem:aux 0 3}, 
letting \(u = \nabla_{\mathcal{D}} u_1^{(m)}, v = \nabla_{\mathcal{D}} u_2^{(m)}\), 
we obtain 
\begin{align*}
    \Bigg\langle  \frac{\nabla_{\mathcal{D}}u_1^{(m)} }{\big|\nabla_{\mathcal{D}}u_1^{(m)} \big|_{\rho}} -  \frac{\nabla_{\mathcal{D}}u_2^{(m)} }{ \big|\nabla_{\mathcal{D}}u_2^{(m)} \big|_{\rho}}, \nabla_{\mathcal{D}}u_{12}^{(m)} \Bigg\rangle
    \geq 
        \int_{\Omega} \bigg(1 - \frac{|\nabla_{\mathcal{D}} u_2^{(m)}|}{ \big|\nabla_{\mathcal{D}} u_2^{(m)}\big|_{\rho}}  \bigg)\frac{|\nabla_{\mathcal{D}} u_{12}^{(m)}|^2}{ \big|\nabla_{\mathcal{D}} u_1^{(m)} \big|_{\rho}}.
\end{align*}
Plugging this in \eqref{eq-proof-lem:GS stability 1} and rearranging the terms, 
we obtain 
\begin{align*}
    & (1 + 2\lambda \delta t^{(m)} )\| \Pi_{\mathcal{D}}u_{12}^{(m)} \|_{L^2(\Omega)}^2
    + (\delta t^{(m)})^2 \| \delta_{\mathcal{D}}^{(m)}(u_1 -  u_2)\|_{L^2(\Omega)}^2\\
    & +  2\delta t^{(m)} \int_{\Omega} \Bigg(1 - \frac{|\nabla_{\mathcal{D}} u_2^{(m)}|}{ |\nabla_{\mathcal{D}} u_2^{(m)}|_{\rho}}  \Bigg)\frac{|\nabla_{\mathcal{D}} u_{12}^{(m)} |^2}{ |\nabla_{\mathcal{D}} u_1^{(m)}|_{\rho}}
     \leq  
        \| \Pi_{\mathcal{D}}u_{12}^{(m - 1)}\|_{L^2(\Omega)}^2 + \delta t^{(m)}  R^{(m)}(u_{12}^{(m)}).
\end{align*} 
Then, apply the telescopic sum to get \eqref{eq-lem:GS stability 2}.
\end{proof}

The following lemma establishes an estimate for the consistency of the scheme.
Consistency of the scheme measures the error committed when plugging the interpolation of the exact solution into the scheme.
\begin{lemma}[Consistency]\label{lem:GS consistency}
    Suppose that Assumption \eqref{def:GDM Error Estimate Theorem assumption} holds.
    For \(1 \leq m, \ell \leq M\) and any \(v = (v^{(0)}, v^{(1)}, \ldots, v^{(M)})\in X_{\mathcal{D}}^{M+1}\),
    define 
    \begin{equation}\label{eq-lem:GS consistency 1}\begin{aligned}
             \mathcal{E}(\overline{u}(t^{(\ell)}), v^{(\ell)}) 
             := \, & 
                \int_{\Omega} \delta_{\mathcal{D}}^{(\ell)} P_{\mathcal{D}} \overline{u} \  \Pi_{\mathcal{D}} v^{(\ell)} 
                + \int_{\Omega} \frac{\nabla_{\mathcal{D}} P_{\mathcal{D}} \overline{u}(t^{(\ell)})}{ |\nabla_{\mathcal{D}} P_{\mathcal{D}} \overline{u}(t^{(\ell)})|_{\rho}} \cdot \nabla_{\mathcal{D}} v^{(\ell)}\\ 
                & + \lambda \int_{\Omega} \left[\Pi_{\mathcal{D}}P_{\mathcal{D}}\overline{u}(t^{(\ell)}) - g\right] \Pi_{\mathcal{D}} v^{(\ell)}.
    \end{aligned}\end{equation} 
    Then, for all \(\Lambda^{(2)} > 0\),
    \begin{equation}\label{eq-lem:GS consistency 2}
        \begin{aligned}
         \sum_{\ell = 1}^m \delta t^{(\ell)} \mathcal{E} (\overline{u}(t^{(\ell)}), v^{(\ell)})
        \lesssim \, & 
         \frac{1}{\Lambda^{(2)}}\big[(\delta t_{\mathcal{D}}^{\max})^2 
                + h_{\mathcal{D}}^2 \big] 
         +  \Lambda^{(2)} \sum_{\ell = 1}^m \delta t_{\mathcal{D}}^{(\ell)} \| \Pi_{\mathcal{D}} v^{(\ell)} \|_{L^2(\Omega)}^2\\
        &  + \|\nabla_{\mathcal{D}} v\|_{L^1(0,T; L^1(\Omega))} h_{\mathcal{D}}.
        \end{aligned}
    \end{equation}  
\end{lemma}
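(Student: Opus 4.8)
The plan is to rewrite the consistency functional $\mathcal{E}(\overline{u}(t^{(\ell)}), v^{(\ell)})$ as a sum of three terms---a time-discretisation error, a limit-conformity defect, and a reaction/interpolation error---each controlled by an ingredient already at hand, and then to sum over $\ell$ against the weights $\delta t^{(\ell)}$. The starting point is the defining identity of the non-linear interpolator, Lemma \ref{lem:existence of NL interpolator}, applied with $u=\overline{u}(t^{(\ell)})$ and $w=v^{(\ell)}$: it lets me replace the discrete non-linear flux in \eqref{eq-lem:GS consistency 1} by the exact flux, i.e.
\[
\int_\Omega \frac{\nabla_{\mathcal{D}} P_{\mathcal{D}}\overline{u}(t^{(\ell)}) \cdot \nabla_{\mathcal{D}} v^{(\ell)}}{\sqrt{\epsilon^2 + |\nabla_{\mathcal{D}} P_{\mathcal{D}}\overline{u}(t^{(\ell)})|^2}} = \int_\Omega \bm{\varphi}_\ell \cdot \nabla_{\mathcal{D}} v^{(\ell)}, \qquad \bm{\varphi}_\ell := \frac{\nabla\overline{u}(t^{(\ell)})}{\sqrt{\epsilon^2 + |\nabla\overline{u}(t^{(\ell)})|^2}}.
\]
This is the crucial cancellation making the whole estimate tractable, since it trades an unknown discrete quantity for one to which limit-conformity applies.

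Next I would invoke the divergence identity built into $\widetilde{W}_{\mathcal{D}}$ (Definition \ref{def:GD limit conformity}) to write $\int_\Omega \bm{\varphi}_\ell \cdot \nabla_{\mathcal{D}} v^{(\ell)} = \widetilde{W}_{\mathcal{D}}(\bm{\varphi}_\ell, v^{(\ell)}) - \int_\Omega \Pi_{\mathcal{D}} v^{(\ell)}\,\text{div}\,\bm{\varphi}_\ell$. By the regularity in (H3)--(H4), $\bm{\varphi}_\ell \in H_{\text{div}}(\Omega)$ and $\overline{u}$ solves \eqref{model:rtvf} pointwise in time, so $\text{div}\,\bm{\varphi}_\ell = \partial_t\overline{u}(t^{(\ell)}) + \lambda(\overline{u}(t^{(\ell)}) - g)$. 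Substituting into \eqref{eq-lem:GS consistency 1}, the $g$-contributions cancel between the recovered reaction term and the scheme's reaction term, leaving
\[
\mathcal{E}(\overline{u}(t^{(\ell)}), v^{(\ell)}) = \langle \delta_{\mathcal{D}}^{(\ell)} P_{\mathcal{D}}\overline{u} - \partial_t\overline{u}(t^{(\ell)}), \Pi_{\mathcal{D}} v^{(\ell)}\rangle + \widetilde{W}_{\mathcal{D}}(\bm{\varphi}_\ell, v^{(\ell)}) + \lambda\langle \Pi_{\mathcal{D}} P_{\mathcal{D}}\overline{u}(t^{(\ell)}) - \overline{u}(t^{(\ell)}), \Pi_{\mathcal{D}} v^{(\ell)}\rangle.
\]

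The limit-conformity term is handled by $|\widetilde{W}_{\mathcal{D}}(\bm{\varphi}_\ell, v^{(\ell)})| \leq W_{\mathcal{D}}(\bm{\varphi}_\ell)\,\|v^{(\ell)}\|_{\mathcal{D}} \lesssim h_{\mathcal{D}}\big(\|\Pi_{\mathcal{D}} v^{(\ell)}\|_{L^2(\Omega)} + \|\nabla_{\mathcal{D}} v^{(\ell)}\|_{L^1(\Omega)}\big)$, using Definition \ref{def:GD space size} together with the boundedness of $\|\nabla\overline{u}(t^{(\ell)})\|_{\bm{W}_w}$ from (H4); after summation, Young's inequality turns the $L^2$-part into the $\frac{1}{\Lambda^{(2)}}h_{\mathcal{D}}^2$ and $\Lambda^{(2)}\sum \delta t^{(\ell)}\|\Pi_{\mathcal{D}} v^{(\ell)}\|_{L^2(\Omega)}^2$ contributions, while the $L^1$-part becomes the final $h_{\mathcal{D}}\,\|\nabla_{\mathcal{D}} v\|_{L^1(0,T;L^1(\Omega))}$ term. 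The reaction term is bounded directly by Cauchy--Schwarz, Young, and the $L^2$-interpolation estimate \eqref{eq-lem:NL interpolation error estimate 4}, feeding the $\alpha_{\mathcal{D}}^2 h_{\mathcal{D}}^2 + h_{\mathcal{D}}^4|\ln h_{\mathcal{D}}|$ group.

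For the time term I would split $\delta_{\mathcal{D}}^{(\ell)} P_{\mathcal{D}}\overline{u} - \partial_t\overline{u}(t^{(\ell)})$ into the backward-difference error of $\overline{u}$ plus the increment $\tfrac{e_\ell - e_{\ell-1}}{\delta t^{(\ell)}}$ of the $L^2$-interpolation error $e_\ell := \Pi_{\mathcal{D}} P_{\mathcal{D}}\overline{u}(t^{(\ell)}) - \overline{u}(t^{(\ell)})$. The first piece is standard: a Taylor estimate with $\overline{u}\in H^2(0,T;L^2(\Omega))$ from (H3) and Young's inequality produce the $\frac{1}{\Lambda^{(2)}}(\delta t_{\mathcal{D}}^{\max})^2$ and $\Lambda^{(2)}\sum\delta t^{(\ell)}\|\Pi_{\mathcal{D}} v^{(\ell)}\|_{L^2(\Omega)}^2$ terms. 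The second piece is the main obstacle: because $P_{\mathcal{D}}$ is non-linear, there is no control on the time-increments of $P_{\mathcal{D}}\overline{u}$, so a naive bound of $\|e_\ell - e_{\ell-1}\|_{L^2(\Omega)}/\delta t^{(\ell)}$ would cost a factor $(\delta t_{\mathcal{D}}^{\min})^{-2}$. To recover only the single power $(\delta t_{\mathcal{D}}^{\min})^{-1}$ in \eqref{eq-lem:GS consistency 2}, I would perform a summation by parts in $\ell$ on $\sum_\ell\langle e_\ell - e_{\ell-1}, \Pi_{\mathcal{D}} v^{(\ell)}\rangle$, transferring the difference onto the test function: the increments $\Pi_{\mathcal{D}} v^{(\ell+1)} - \Pi_{\mathcal{D}} v^{(\ell)} = \delta t^{(\ell+1)}\delta_{\mathcal{D}}^{(\ell+1)} v$ are absorbed into $\Lambda^{(2)}\sum(\delta t^{(\ell)})^2\|\delta_{\mathcal{D}}^{(\ell)} v\|_{L^2(\Omega)}^2$, the boundary terms give $\Lambda^{(2)}\big(\|\Pi_{\mathcal{D}} v^{(0)}\|_{L^2(\Omega)}^2 + \|\Pi_{\mathcal{D}} v^{(m)}\|_{L^2(\Omega)}^2\big)$ (controlling $\|\Pi_{\mathcal{D}} v^{(1)}\|_{L^2(\Omega)}$ by $\|\Pi_{\mathcal{D}} v^{(0)}\|_{L^2(\Omega)}$ and $\delta t^{(1)}\|\delta_{\mathcal{D}}^{(1)} v\|_{L^2(\Omega)}$), and the residual sum $\sum_\ell\|e_\ell\|_{L^2(\Omega)}^2 \lesssim M(\alpha_{\mathcal{D}}^2 h_{\mathcal{D}}^2 + h_{\mathcal{D}}^4|\ln h_{\mathcal{D}}|)$ with $M \lesssim (\delta t_{\mathcal{D}}^{\min})^{-1}$ yields exactly the $\frac{\alpha_{\mathcal{D}}^2 h_{\mathcal{D}}^2 + h_{\mathcal{D}}^4|\ln h_{\mathcal{D}}|}{\delta t_{\mathcal{D}}^{\min}}$ term. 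Collecting the three groups over $\ell$ gives \eqref{eq-lem:GS consistency 2}.
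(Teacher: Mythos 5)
Your proposal is correct and follows essentially the same route as the paper's proof: the same cancellation via the defining identity of $P_{\mathcal{D}}$, the same use of the pointwise PDE and limit-conformity to reduce $\mathcal{E}$ to a time-discretisation error, a $\widetilde{W}_{\mathcal{D}}$ defect and a reaction/interpolation term, and the same summation by parts on the increments of $\theta(t^{(\ell)})=\Pi_{\mathcal{D}}P_{\mathcal{D}}\overline{u}(t^{(\ell)})-\overline{u}(t^{(\ell)})$ to avoid differentiating the non-linear interpolator in time, with the residual sum $\sum_{\ell}\|\theta(t^{(\ell)})\|_{L^2(\Omega)}^2$ paying the single factor $(\delta t_{\mathcal{D}}^{\min})^{-1}$. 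The only cosmetic difference is that the paper applies the discrete product rule before summing (so the telescoping directly produces the boundary terms at $\ell=0$ and $\ell=m$), whereas your Abel summation produces $\Pi_{\mathcal{D}}v^{(1)}$ as a boundary term, which you correctly note can be traded for $\Pi_{\mathcal{D}}v^{(0)}$ and $\delta t^{(1)}\delta_{\mathcal{D}}^{(1)}v$.
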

\begin{proof}
    Note that \(\overline{u}\) is Lipschitz-continuous \([0,T] \rightarrow H^2(\Omega)\). So, for \(v\in X_{\mathcal{D}}\), with \(\Pi_{\mathcal{D}}v \in L^2(\Omega)\), we have, for \(1 \leq \ell \leq M\),
    \begin{equation}\label{eq-proof-lem:GS consistency rtvf in L2}\begin{aligned}
        \langle \lambda \, g , \Pi_{\mathcal{D}}v \rangle 
        = \bigg\langle \partial_t\overline{u} (t^{(\ell)}) 
        - \text{div}\bigg(\frac{\nabla \overline{u} (t^{(\ell)})}{|\nabla \overline{u}  (t^{(\ell)})|_{\rho}}\bigg)
        + \lambda \, \overline{u}  (t^{(\ell)}),  \Pi_{\mathcal{D}}v \bigg\rangle
    \end{aligned}\end{equation}
    and 
    \begin{equation}\label{eq-proof-lem:GS consistency limit conformity}
        \begin{aligned}
            \int_{\Omega} \text{div}\bigg(\frac{\nabla \overline{u} (t^{(\ell)})}{|\nabla \overline{u}  (t^{(\ell)})|_{\rho}}\bigg) \Pi_{\mathcal{D}}v 
            =   -\int_{\Omega} \frac{\nabla \overline{u} (t^{(\ell)}) }{|\nabla \overline{u}  (t^{(\ell)})|_{\rho}} \cdot \nabla_{\mathcal{D}}v 
                 + \widetilde{W}_{\mathcal{D}}\bigg(\frac{\nabla \overline{u} (t^{(\ell)})}{ |\nabla \overline{u} (t^{(\ell)})|_{\rho}}, v\bigg).
        \end{aligned}
    \end{equation} 
    Let 
    \[
      \forall 1 \leq \ell \leq M, \quad \theta(t^{(\ell)}) := \Pi_{\mathcal{D}} P_{\mathcal{D}} \overline{u} (t^{(\ell)}) - \overline{u}(t^{(\ell)}),
      \quad 
      \delta_{\mathcal{D}}^{(\ell)} \theta := \frac{\theta(t^{(\ell)}) - \theta (t^{(\ell - 1)})}{t^{(\ell)} - t^{(\ell - 1)}}.
    \]
    For \(v\in X_{\mathcal{D}}^{M+1}\) and \(1\leq \ell \leq M\), considering the term \eqref{eq-lem:GS consistency 1}, 
    using
    \eqref{eq-proof-lem:GS consistency rtvf in L2},
    \eqref{eq-proof-lem:GS consistency limit conformity},  
    and \eqref{eq:existence of NL interpolator lemma 2},
    then introducing 
    \[
        \delta_{\mathcal{D}}^{(\ell)} \overline{u}:=\frac{\overline{u}(t^{(\ell)}) - \overline{u}(t^{(\ell-1)})}{t^{(\ell)} - t^{(\ell-1)}}
    \] 
    and recalling Definition \ref{def:GD space size} of \(h_{\mathcal{D}}\),
    we have
\begin{align*}
    \mathcal{E}(\overline{u}(t^{(\ell)}), v^{(\ell)})
   = \, & 
        \int_{\Omega} \left[\delta_{\mathcal{D}}^{(\ell)} P_{\mathcal{D}} \overline{u} - \partial_t \overline{u}(t^{(\ell)})\right] \Pi_{\mathcal{D}} v^{(\ell)} + \lambda \int_{\Omega} \theta(t^{(\ell)}) \ \Pi_{\mathcal{D}} v^{(\ell)} 
         + \widetilde{W}_{\mathcal{D}} \bigg(\frac{\nabla \overline{u} (t^{(\ell)})}{ |\nabla \overline{u} (t^{(\ell)})|_{\rho}}, v^{(\ell)}\bigg)\\
   \leq \, & 
        \int_{\Omega}  \delta_{\mathcal{D}}^{(\ell)} \theta \ \Pi_{\mathcal{D}} v^{(\ell)} + \int_{\Omega} \left[\delta_{\mathcal{D}}^{(\ell)}\overline{u} - \partial_t \overline{u}(t^{(\ell)})\right] \Pi_{\mathcal{D}} v^{(\ell)} + \lambda \int_{\Omega} \theta (t^{(\ell)}) \ \Pi_{\mathcal{D}} v^{(\ell)} \\
        & + \| v^{(\ell)}\|_{\mathcal{D}} \| \nabla \overline{u} (t^{(\ell)}) \|_{\bm{W}_w} h_{\mathcal{D}}.
\end{align*}
Next, multiply by \(\delta t^{(\ell)}\), sum over \(\ell\) from \(1\) to \(m\), for \(1 \leq m \leq M\), use the Cauchy-Schwarz inequality and the Young inequality to write, for any \(\Lambda^{(2)} > 0\),
\begin{align}
    \sum_{\ell = 1}^m \delta t^{(\ell)} \mathcal{E}(\overline{u}(t^{(\ell)}), v^{(\ell)})
    \lesssim \, & \sum_{\ell = 1}^m \delta t^{(\ell)} \| \delta_{\mathcal{D}}^{(\ell)} \theta\|_{L^2(\Omega)} \| \Pi_{\mathcal{D}} v^{(\ell)}\|_{L^2(\Omega)} \nonumber \\
        & + \sum_{\ell = 1}^m \delta t^{(\ell)} \|\delta_{\mathcal{D}}^{(\ell)}\overline{u}- \partial_t \overline{u}(t^{(\ell)})\|_{L^2(\Omega)} \|\Pi_{\mathcal{D}} v^{(\ell)} \|_{L^2(\Omega)} \nonumber \\
        & + \lambda \sum_{\ell = 1}^m \delta t^{(\ell )} \|\theta (t^{(\ell)}) \|_{L^2(\Omega)} \|\Pi_{\mathcal{D}} v^{(\ell)}\|_{L^2(\Omega)} 
         + \sum_{\ell = 1}^m \delta t^{(\ell)} \| v^{(\ell)}\|_{\mathcal{D}} h_{\mathcal{D}} \nonumber \\ 
     \lesssim \, &  
        \frac{1}{\Lambda^{(2)}} T_1 +  \Lambda^{(2)}  T_2 + \sum_{\ell = 1}^m \delta t^{(\ell)} \| v^{(\ell)}\|_{\mathcal{D}} h_{\mathcal{D}}, \label{eq-proof-lem:GS consistency 2}
\end{align}
where \(T_1\) gather all the terms involving the approximations that measure \(\theta\) and \(T_2\) gather the terms involving the test function \(v\):
\begin{align*}
    T_1 := \, & \sum_{\ell = 1}^m \delta t^{(\ell)} \| \delta_{\mathcal{D}}^{(\ell)} \theta\|_{L^2(\Omega)}^2
             + \sum_{\ell = 1}^m \delta t^{(\ell)} \|\delta_{\mathcal{D}}^{(\ell)}\overline{u} 
            - \partial_t \overline{u}(t^{(\ell)})\|_{L^2(\Omega)}^2 
            +  \lambda  \sum_{\ell = 1}^m \delta t^{(\ell )} \|\theta (t^{(\ell)}) \|_{L^2(\Omega)}^2, \\
    T_2 := \, & \sum_{\ell = 1}^m \delta t^{(\ell)} \|\Pi_{\mathcal{D}} v^{(\ell)} \|_{L^2(\Omega)}^2
            + \lambda \sum_{\ell = 1}^m \delta t^{(\ell)} \|\Pi_{\mathcal{D}} v^{(\ell)}\|_{L^2(\Omega)}^2 
            \lesssim \sum_{\ell = 1}^m \delta t^{(\ell)} \|\Pi_{\mathcal{D}} v^{(\ell)} \|_{L^2(\Omega)}^2.
\end{align*}

Consider the term \(T_1\).
For the first term, by the estimate \eqref{eq-lem:NL interpolation error estimate time derivative 2}, 
\begin{equation*}
    \| \delta_{\mathcal{D}}^{(\ell)} \theta\|_{L^2(\Omega)}^2 = \frac{\| \theta^{(\ell)} - \theta^{(\ell -1)}\|_{L^2(\Omega)}^2}{(\delta t^{(\ell)} - \delta t^{(\ell - 1)})^2} 
    \leq 
        \frac{(\delta t^{(\ell)} - \delta t^{(\ell - 1)})^2 \|\partial_t \theta\|_{L^{\infty}(0,T;L^2(\Omega))}^2}{(\delta t^{(\ell)} - \delta t^{(\ell - 1)})^2} 
    \lesssim 
        h_{\mathcal{D}}^2.
\end{equation*}
For the second term, using the fundamental theorem of calculus and the Cauchy-Schwarz inequality, we obtain
    \begin{align*}%\label{eq-proof-lem:GS consistency 1}
         \|\delta_{\mathcal{D}}^{(\ell)}\overline{u} - \partial_t \overline{u}(t^{(\ell)})\|_{L^2(\Omega)}^2 
        = \, & 
            \bigg\|  (\delta t^{(\ell)})^{-1}\int_{t^{(\ell-1)}}^{t^{(\ell)}} (s-t^{(\ell-1)})\, \partial_{tt}\overline{u} \, ds\bigg\|_{L^2(\Omega)}^2\\
        \leq \, &
            \bigg\| (\delta t^{(\ell)})^{-1} \bigg|\int_{t^{(\ell-1)}}^{t^{(\ell)}} (s-t^{(\ell-1)})^2 \, ds \bigg|^{\frac{1}{2}} \bigg| \int_{t^{(\ell-1)}}^{t^{(\ell)}} |\partial_{tt}\overline{u} |^2 \, ds \bigg|^{\frac{1}{2}} \bigg\|_{L^2(\Omega)}^2\\
        \leq \, & 
            \delta t^{(\ell)}  \bigg| \int_{t^{(\ell-1)}}^{t^{(\ell)}} \|\partial_{tt} \overline{u} \|_{L^2(\Omega)}^2 \, ds\bigg| 
        = 
            \delta t^{(\ell)} \|\overline{u}\|_{H^{2}((t^{(\ell-1)}, t^{(\ell)}); L^2(\Omega))}^2.
    \end{align*} 
For the third term, we use the estimate \eqref{eq-lem:NL interpolation error estimate 4} directly.
Collecting these estimates gives
\begin{align*}
    T_1 
    \lesssim  T h_{\mathcal{D}}^2
         + \sum_{\ell = 1}^{m}( \delta t^{(\ell)})^2\|\overline{u} \|_{H^{2}((t^{(\ell-1)}, t^{(\ell)}); L^2(\Omega))}^2 + T \lambda  h_{\mathcal{D}}^2
    \lesssim 
       \| \overline{u}\|_{H^{2}(0,T; L^2(\Omega))}^2 (\delta t_{\mathcal{D}}^{\max})^2 + h_{\mathcal{D}}^2.
\end{align*}

Considering the last term of \eqref{eq-proof-lem:GS consistency 2}, applying the triangle inequality and the Young inequality, and  \eqref{eq-def:GD definition norm}, we obtain
\begin{align*}
    \sum_{\ell = 1}^m \delta t^{(\ell)} \|v^{(\ell)}\|_{\mathcal{D}} h_{\mathcal{D}} 
    \lesssim  
        \sum_{\ell = 1}^m \delta t^{(\ell)} 
        \Big(\Lambda^{(2)} \|\Pi_{\mathcal{D}} v^{(\ell)}\|_{L^2(\Omega)}^2 
        +   \frac{1}{\Lambda^{(2)}} h_{\mathcal{D}}^2 
        + \|\nabla_{\mathcal{D}} v^{(\ell)}\|_{L^1(\Omega)} h_{\mathcal{D}}\Big).
\end{align*}
Combining this and the estimate for \(T_1\) yields \eqref{eq-lem:GS consistency 2}.
\end{proof}
\begin{remark}
      In \cite[Equation (4.27)]{Feng2003AnalysisOT}, the equivalent of the term $\delta_{\mathcal{D}}^{(\ell)} \theta$ is apparently bounded by swapping the discrete derivative and the interpolator (equivalent of $P_{\mathcal D}$). However, this swap does not seem valid since the interpolator is not a linear mapping. As a consequence, it seems to us that there is a gap in the analysis done in this reference, and our treatment of this term in the proof above fixes that gap -- which requires paying a cost in the form of a slightly worse estimate.
\end{remark}
\begin{remark}
    We assume the exact solution \(\overline{u}\) belongs to \(H^2(0,T;L^2(\Omega))\), which is stronger than \(H^2(0,T;H^1(\Omega)')\) in \cite[Theorem 1.7]{Feng2003AnalysisOT}.
    This assumption is used in Lemma \ref{lem:GS consistency} to estimate the term
    \[ \int_{\Omega}\big[\delta_{\mathcal{D}}^{(\ell)}\overline{u} - \partial_t \overline{u}(t^{(\ell)})\big] \, \Pi_{\mathcal{D}} v^{(\ell)}.\]
    This product cannot be written as a duality product between \(H^1(\Omega)'\) and \(H^1(\Omega)\), since our approximation is non-conforming. 
    It could however be possible to weaken the regularity assumption;
    that is, bound the above term using discrete \(H^1\) norm and discrete \((H^1)'\) norm on the time derivative, and bound the discrete norm using a weaker regularity on \(\overline{u}\).
    To improve the legibility of our (already technical) estimates, we are not exploring this path in this work and rely on the stronger regularity assumption on \(\overline{u}\).
\end{remark}
    \section{Proof of the main theorems}\label{section:main proof}
In this section, a proof of Theorem \ref{thm:GDM Error Estimate Theorem} is given and done by splitting the main steps into lemmas.

\begin{lemma}\label{lem:main theorem step1}
    Suppose that Assumption \eqref{def:GDM Error Estimate Theorem assumption} holds.
    Let \(u = (u^{(\ell)}: \ell = 1, \ldots, M) \in X_{\mathcal{D}}^{M}\) be the solution to \eqref{eq-def:GS}.
    Then, for all \(m = 1, \ldots, M\),
    \begin{equation}\label{eq-lem:main theorem step1 1} \begin{aligned}
        & \| \Pi_{\mathcal{D}} u^{(m)} - \Pi_{\mathcal{D}} P_{\mathcal{D}} \overline{u}(t^{(m)})\|_{L^2(\Omega)}^2 
         + \Lambda^{(3)} \sum_{\ell = 1}^m  \delta t^{(\ell)}  \int_{\Omega}  \frac{|\nabla_{\mathcal{D}} u^{(\ell)} - \nabla_{\mathcal{D}}P_{\mathcal{D}} \overline{u} ( t^{(\ell)})|^2}{|\nabla_{\mathcal{D}} u^{(\ell)}|_{\rho}}\\
     \lesssim \, &  
        (e_{\mathcal{D}}^{\text{ini}} )^2 + (\delta t_{\mathcal{D}}^{\max})^2 + h_{\mathcal{D}}^2  + \|\nabla_{\mathcal{D}} u - \nabla_{\mathcal{D}} P_{\mathcal{D}} \overline{u}\|_{L^1(0,T; L^1(\Omega))} h_{\mathcal{D}},
    \end{aligned}\end{equation}
    where $e_{\mathcal{D}}^{\text{ini}}$ is given by \eqref{def:eDini} and
    \begin{equation}\label{eq:def.lambda3}
        \Lambda^{(3)}
        := 
            \min_{1 \leq \ell \leq M}\essinf_{x\in \Omega} \left(1 - \frac{|\nabla_{\mathcal{D}}P_{\mathcal{D}} \overline{u} (x, t^{(\ell)})|}{ |\nabla_{\mathcal{D}}P_{\mathcal{D}} \overline{u} ( x,t^{(\ell)})|_{\rho}}  \right) \gtrsim 1.
    \end{equation}
    As a consequence,  
    \begin{equation}\label{eq-lem:main theorem step1 2} \begin{aligned}
        & \| \Pi_{\mathcal{D}} u^{(m)} - \Pi_{\mathcal{D}} P_{\mathcal{D}} \overline{u}(t^{(m)}) \|_{L^2(\Omega)}^2 
         + \Lambda^{(3)} \sum_{\ell = 1}^m  \delta t^{(\ell)}  \int_{\Omega}  \frac{|\nabla_{\mathcal{D}} u^{(\ell)} - \nabla_{\mathcal{D}}P_{\mathcal{D}} \overline{u} ( t^{(\ell)})|^2}{ |\nabla_{\mathcal{D}} u^{(\ell)}|_{\rho}}\\
        \lesssim \, 
            &  (e_{\mathcal{D}}^{\text{ini}} )^2 
            + (\delta t_{\mathcal{D}}^{\max})^2
            + h_{\mathcal{D}}.
    \end{aligned}
    \end{equation}

\end{lemma}
\begin{proof}
    Let 
\begin{equation} \label{eq-proof-lem:main theorem step1 1}
    \xi := \max_{\\ 1 \leq \ell \leq M}\esssup_{ x\in \Omega} |\nabla_{\mathcal{D}} P_{\mathcal{D}} \overline{u}(x, t^{(\ell)})|.
\end{equation}
By \eqref{eq-lem:NL interpolation error estimate 1}, we have \(\xi \lesssim 1\).
Noticing that
\begin{equation}\label{eq-proof-lem:main theorem step1 increasing fraction}
    \text{for all } \nu \geq 0, \text{ the function }  \mu \in \mathbb{R}^+ \mapsto \frac{\mu -\nu}{|\mu|_{\rho}} \text{ is strictly increasing,} 
\end{equation}
we have
\begin{equation*}
    \Lambda^{(3)} = \min_{1 \leq \ell \leq M}\essinf_{x\in \Omega}  \left(1 - \frac{|\nabla_{\mathcal{D}}P_{\mathcal{D}} \overline{u} (x, t^{(\ell)})|}{ |\nabla_{\mathcal{D}}P_{\mathcal{D}} \overline{u} ( x,t^{(\ell)})|_{\rho}}  \right) = 1 - \frac{\xi}{|\xi|_{\rho}} > 0.
\end{equation*}
From Lemma \ref{lem:GS stability} with  \(  u_1^{(m)} := u^{(m)}\) and \(u_2^{(m)} := P_{\mathcal{D}} \overline{u}(t^{(m)})\), we find
\begin{equation}\label{eq-proof-lem:main theorem step1 3.1}\begin{aligned}
        & \| \Pi_{\mathcal{D}} u^{(m)} - \Pi_{\mathcal{D}} P_{\mathcal{D}} \overline{u}(t^{(m)}) \|_{L^2(\Omega)}^2 
        + \sum_{\ell = 1}^{m} \delta t^{(\ell)} \| \Pi_{\mathcal{D}} u^{(\ell)} - \Pi_{\mathcal{D}} P_{\mathcal{D}} \overline{u}(t^{(\ell)}) \|_{L^2(\Omega)}^2\\
        & + \sum_{\ell = 1}^m (\delta t^{(\ell)})^2 \| \delta_{\mathcal{D}}^{(\ell)}\left(  u -  P_{\mathcal{D}} \overline{u} \right)\|_{L^2(\Omega)}^2 
        + \Lambda^{(3)} \sum_{\ell = 1}^m  \delta t^{(\ell)}  \int_{\Omega}  \frac{|\nabla_{\mathcal{D}} u^{(\ell)} - \nabla_{\mathcal{D}}P_{\mathcal{D}} \overline{u} ( t^{(\ell)})|^2}{|\nabla_{\mathcal{D}} u^{(\ell)}|_{\rho}}\\
        \leq \, &   
            \| \Pi_{\mathcal{D}}u^{(0)} - \Pi_{\mathcal{D}} P_{\mathcal{D}}\overline{u}(t^{(0)})\|_{L^2(\Omega)}^2 + \sum_{\ell = 1}^m \delta t^{(\ell)}  R^{(\ell)} (u^{(\ell)} - P_{\mathcal{D}} \overline{u} (t^{(\ell)}))
\end{aligned} \end{equation}
with \(R^{(\ell)}(\cdot) = \mathcal{E}(\overline{u}(t^{(\ell)}), \cdot).\)
% with, for all \(v\in X_{\mathcal{D}}\),
% \begin{align*}
%     & R_1^{(m)} (v) = 0, \\
%     & R_2^{(m)}(v) = \int_{\Omega} \delta_{\mathcal{D}}^{(m)} P_{\mathcal{D}} \overline{u} \,  \Pi_{\mathcal{D}} v + \int_{\Omega} \frac{\nabla_{\mathcal{D}} P_{\mathcal{D}} \overline{u}(t^{(m)})\cdot \nabla_{\mathcal{D}} v}{\sqrt{|\nabla_{\mathcal{D}} P_{\mathcal{D}} \overline{u}(t^{(m)})|^2 + \rho^2}} 
%      + \lambda \int_{\Omega} \left[\Pi_{\mathcal{D}}P_{\mathcal{D}}\overline{u}(t^{(m)}) - g\right] \Pi_{\mathcal{D}} v.
% \end{align*}
Applying Lemma \ref{lem:GS consistency}, 
with \(v^{(\ell)} = u^{(\ell)} - P_{\mathcal{D}} \overline{u} (t^{(\ell)})\), 
we arrive at,
for any \(\Lambda^{(2)} > 0\),
\begin{equation}\label{eq-proof-lem:main theorem step1 3.2}\begin{aligned}
        & \sum_{\ell = 1}^m \delta t^{(\ell)}  R^{(\ell)}(u^{(\ell)} - P_{\mathcal{D}} \overline{u} (t^{(\ell)})) \\
        \lesssim \, 
        & \frac{1}{\Lambda^{(2)}} \big[ (\delta t_{\mathcal{D}}^{\max})^2 
                + h_{\mathcal{D}}^2 \big]
                +  \Lambda^{(2)}  \sum_{\ell = 1}^m \delta t^{(\ell)} \|\Pi_{\mathcal{D}}u^{(\ell)} - \Pi_{\mathcal{D}} P_{\mathcal{D}}\overline{u}(t^{(\ell)}) \|_{L^2(\Omega)}^2\\
        &+ \|\nabla_{\mathcal{D}}u  - \nabla_{\mathcal{D}} P_{\mathcal{D}}\overline{u}\|_{L^1(0,T; L^1(\Omega))} h_{\mathcal{D}}.
\end{aligned} \end{equation}
Combining \eqref{eq-proof-lem:main theorem step1 3.1} and \eqref{eq-proof-lem:main theorem step1 3.2} and choosing \(\Lambda^{(2)}\) small enough (depending on the hidden constants) yields \eqref{eq-lem:main theorem step1 1}.

From Lemma \ref{lem:GS existence and uniqueness} and \eqref{eq-lem:NL interpolation error estimate 1}, 
we know that
    \begin{equation}\label{eq-proof-lem:main theorem step1 2}
        \| \nabla_{\mathcal{D}} u  -  \nabla_{\mathcal{D}}  P_{\mathcal{D}} \overline{u} \|_{L^1(0,T; L^1(\Omega))} \lesssim 1. 
    \end{equation}
Thus, bound the last term in \eqref{eq-lem:main theorem step1 1} using \eqref{eq-proof-lem:main theorem step1 2} to obtain \eqref{eq-lem:main theorem step1 2}.
\end{proof}

In the following lemma, the estimate \eqref{eq-proof-lem:main theorem step1 2} is improved. 
\begin{lemma}\label{lem:main theorem step2}
Suppose that Assumption \eqref{def:GDM Error Estimate Theorem assumption} holds.
Let \(u = (u^{(\ell)}: \ell = 1, \ldots, M) \in X_{\mathcal{D}}^{M}\) be the solution to \eqref{eq-def:GS}.
Then 
    \begin{equation}\label{eq-lem:main theorem step2} \begin{aligned}
        \|\nabla_{\mathcal{D}} u - \nabla_{\mathcal{D}}P_{\mathcal{D}} \overline{u} \|_{L^1(0,T; L^1(\Omega))} 
        \lesssim \ &  
            e_{\mathcal{D}}^{\text{ini}} + (e_{\mathcal{D}}^{\text{ini}})^2  + \delta t_{\mathcal{D}}^{\max}  + h_{\mathcal{D}}.
    \end{aligned}\end{equation}
\end{lemma}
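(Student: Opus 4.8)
The plan is to convert the weighted space-time $L^2$ control furnished by Lemma \ref{lem:main theorem step1} into the desired $L^1(0,T;L^1(\Omega))$ bound, and then to close a \emph{self-referential} inequality by a bootstrap/quadratic argument. Throughout, write \(E := \|\nabla_{\mathcal{D}} u - \nabla_{\mathcal{D}} P_{\mathcal{D}} \overline{u}\|_{L^1(0,T;L^1(\Omega))}\) and let
\[
  Q := \sum_{\ell = 1}^m \delta t^{(\ell)} \int_{\Omega} \frac{|\nabla_{\mathcal{D}} u^{(\ell)} - \nabla_{\mathcal{D}} P_{\mathcal{D}} \overline{u}(t^{(\ell)})|^2}{\sqrt{\epsilon^2 + |\nabla_{\mathcal{D}} u^{(\ell)}|^2}}
\]
be the weighted quantity controlled on the left-hand side of \eqref{eq-lem:main theorem step1 1}.

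First I would pass from \(Q\) to \(E\) using the Cauchy--Schwarz device already employed in \eqref{eq-proof-lem::NL interpolation error estimate 1}, with the compensating weight \(\sqrt{\epsilon^2 + |\nabla_{\mathcal{D}} u^{(\ell)}|^2}\). Factoring the integrand as \(|\nabla_{\mathcal{D}} u^{(\ell)} - \nabla_{\mathcal{D}} P_{\mathcal{D}} \overline{u}(t^{(\ell)})| = \frac{|\cdots|}{(\epsilon^2 + |\nabla_{\mathcal{D}} u^{(\ell)}|^2)^{1/4}}\,(\epsilon^2 + |\nabla_{\mathcal{D}} u^{(\ell)}|^2)^{1/4}\), applying Cauchy--Schwarz in space, and then a discrete Cauchy--Schwarz in the time index \(\ell\), I obtain
\[
  E \leq Q^{\frac{1}{2}} \Bigg( \sum_{\ell = 1}^m \delta t^{(\ell)} \int_{\Omega} \sqrt{\epsilon^2 + |\nabla_{\mathcal{D}} u^{(\ell)}|^2} \Bigg)^{\frac{1}{2}}.
\]
The second factor is \(\lesssim 1\): bound \(\sqrt{\epsilon^2 + |\nabla_{\mathcal{D}} u^{(\ell)}|^2} \leq \epsilon + |\nabla_{\mathcal{D}} u^{(\ell)}|\), note that the \(\epsilon\)-part contributes only \(\epsilon |\Omega|_2 T \lesssim 1\), and control the rest by the a priori estimate \eqref{eq-lem:apriori estimate of GS} on \(\|\nabla_{\mathcal{D}} u\|_{L^1(0,T;L^1(\Omega))}\). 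Hence \(E^2 \lesssim Q\).

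Next I would feed in the \emph{sharp} first-step estimate \eqref{eq-lem:main theorem step1 1} rather than the cruder \eqref{eq-lem:main theorem step1 2}: since \(\Lambda^{(3)} \gtrsim 1\), that estimate yields
\[
  Q \lesssim (e_{\mathcal{D}}^{\text{ini}})^2 + (\delta t_{\mathcal{D}}^{\max})^2 + \kappa_{\mathcal{D}}^2 + h_{\mathcal{D}}^2 + E\, h_{\mathcal{D}}.
\]
Combining with \(E^2 \lesssim Q\) gives the self-referential quadratic inequality \(E^2 \lesssim a + E\,h_{\mathcal{D}}\), where \(a := (e_{\mathcal{D}}^{\text{ini}})^2 + (\delta t_{\mathcal{D}}^{\max})^2 + \kappa_{\mathcal{D}}^2 + h_{\mathcal{D}}^2\). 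Absorbing the cross term by Young's inequality (\(E\,h_{\mathcal{D}}\) into \(\frac{1}{2}E^2\) plus a multiple of \(h_{\mathcal{D}}^2 \subset a\)) leaves \(E^2 \lesssim a\), so that \(E \lesssim \sqrt{a} \lesssim e_{\mathcal{D}}^{\text{ini}} + \delta t_{\mathcal{D}}^{\max} + \kappa_{\mathcal{D}} + h_{\mathcal{D}}\). This is in fact slightly stronger than the stated bound \eqref{eq-lem:main theorem step2}, whose additional terms \((e_{\mathcal{D}}^{\text{ini}})^2\) and \(\kappa_{\mathcal{D}}^2\) are nonnegative and may simply be added.

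The main obstacle is conceptual rather than computational: because the right-hand side of \eqref{eq-lem:main theorem step1 1} still carries the un-absorbed term \(E\,h_{\mathcal{D}}\), one cannot read off \(E\) directly, and the estimate must be closed by the quadratic bootstrap above. It is equally essential to invoke the version of the first-step bound carrying \(h_{\mathcal{D}}^2\) (and not the consequence \eqref{eq-lem:main theorem step1 2} with a bare \(h_{\mathcal{D}}\)): substituting the weaker bound into \(E^2 \lesssim Q\) would only produce the suboptimal order \(h_{\mathcal{D}}^{1/2}\), whereas the quadratic argument driven by \eqref{eq-lem:main theorem step1 1} recovers the full order \(h_{\mathcal{D}}\).
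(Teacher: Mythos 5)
Your proof is correct, but it takes a genuinely different route from the paper's. The paper does not use a global Cauchy--Schwarz: it splits \(\Omega\) into the sets \(\{|\nabla_{\mathcal{D}} u^{(\ell)}| > \xi+1\}\) and \(\{|\nabla_{\mathcal{D}} u^{(\ell)}| \leq \xi+1\}\), where \(\xi\) is the uniform bound \eqref{eq-proof-lem:main theorem step1 1} on \(|\nabla_{\mathcal{D}} P_{\mathcal{D}}\overline{u}|\); on the first set the integrand of \(Q\) dominates \(|\nabla_{\mathcal{D}} u^{(\ell)} - \nabla_{\mathcal{D}} P_{\mathcal{D}}\overline{u}(t^{(\ell)})|\) up to a constant (reverse triangle inequality plus the monotonicity \eqref{eq-proof-lem:main theorem step1 increasing fraction}), and on the second set Cauchy--Schwarz applies with a bounded weight. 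This yields \(E \lesssim Q + Q^{1/2}\), after which the paper inserts the crude bound \eqref{eq-proof-lem:main theorem step2 3b} into the linear term and the sharp bound \eqref{eq-proof-lem:main theorem step2 3a} into the square-root term, closing with Young's inequality exactly as you do. Your argument replaces the set decomposition by the weighted Cauchy--Schwarz of \eqref{eq-proof-lem::NL interpolation error estimate 1} together with the a priori bound \eqref{eq-lem:apriori estimate of GS} on \(\|\nabla_{\mathcal{D}} u\|_{L^1(0,T;L^1(\Omega))}\), giving the cleaner relation \(E^2 \lesssim Q\) with no linear-in-\(Q\) remainder. What this buys is a marginally sharper conclusion, \(E \lesssim e_{\mathcal{D}}^{\text{ini}} + \delta t_{\mathcal{D}}^{\max} + \kappa_{\mathcal{D}} + h_{\mathcal{D}}\), without the squared terms \((e_{\mathcal{D}}^{\text{ini}})^2 + \kappa_{\mathcal{D}}^2\) that the paper's linear-in-\(Q\) contribution produces; as you note, the stated bound follows by adding those nonnegative terms. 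What the paper's route buys is independence from the a priori estimate at this stage (it only needs the boundedness of \(\nabla_{\mathcal{D}} P_{\mathcal{D}}\overline{u}\)), though in practice both proofs already rely on \eqref{eq-lem:apriori estimate of GS} through \eqref{eq-proof-lem:main theorem step1 2}, so this is not a real saving. Your closing remark about why \eqref{eq-lem:main theorem step1 1} rather than \eqref{eq-lem:main theorem step1 2} must be used is accurate and mirrors the paper's own use of \eqref{eq-proof-lem:main theorem step2 3a} on the square-root term.
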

\begin{proof}    
We start by considering the second term on the left-hand side of \eqref{eq-lem:main theorem step1 1} by analysing 
\begin{equation*}
   1 \leq \ell \leq M, x\in \Omega, 
   \quad 
   \frac{|\nabla_{\mathcal{D}} u^{(\ell)} - \nabla_{\mathcal{D}}P_{\mathcal{D}} \overline{u} (x, t^{(\ell)})|}{ |\nabla_{\mathcal{D}} u^{(\ell)}(x)|_{\rho}} |\nabla_{\mathcal{D}} u^{(\ell)}(x) - \nabla_{\mathcal{D}}P_{\mathcal{D}} \overline{u} (x, t^{(\ell)})|.
\end{equation*}
For \(x\in \{ |\nabla_{\mathcal{D}} u^{(\ell)}| > \xi + 1 \}\) (recall \eqref{eq-proof-lem:main theorem step1 1}),  by the reverse triangle inequality and \eqref{eq-proof-lem:main theorem step1 increasing fraction},
\begin{equation*}
    \frac{|\nabla_{\mathcal{D}} u^{(\ell)}(x) - \nabla_{\mathcal{D}}P_{\mathcal{D}} \overline{u} ( x, t^{(\ell)})|}{ |\nabla_{\mathcal{D}} u^{(\ell)}(x)|_{\rho}} 
    \geq 
        \frac{  |\nabla_{\mathcal{D}} u^{(\ell)}(x)| - |\nabla_{\mathcal{D}}P_{\mathcal{D}} \overline{u} ( x, t^{(\ell)})|  }{ |\nabla_{\mathcal{D}} u^{(\ell)}(x)|_{\rho}} 
    \geq 
        \frac{1}{|\xi + 1|_{\rho}}.
\end{equation*}
Next, for \(x\in \{ |\nabla_{\mathcal{D}} u^{(\ell)}| \leq \xi + 1 \}\), we have 
\begin{equation*}
    \frac{|\nabla_{\mathcal{D}} u^{(\ell)}(x) - \nabla_{\mathcal{D}}P_{\mathcal{D}} \overline{u} ( x, t^{(\ell)})|}{|\nabla_{\mathcal{D}} u^{(\ell)}(x)|_{\rho}} 
    \geq 
        \frac{|\nabla_{\mathcal{D}} u^{(\ell)}(x) - \nabla_{\mathcal{D}}P_{\mathcal{D}} \overline{u} ( x, t^{(\ell)})|}{|\xi + 1|_{\rho}} .
\end{equation*}
So, for \(x\in \Omega\), 
\begin{equation}\label{eq-proof-lem:main theorem step2 1} \begin{aligned}
    &\frac{|\nabla_{\mathcal{D}} u^{(\ell)}(x) - \nabla_{\mathcal{D}}P_{\mathcal{D}} \overline{u} (x, t^{(\ell)})|^2}{|\nabla_{\mathcal{D}} u^{(\ell)}(x)|_{\rho}} \\  
    \geq \, & 
        \mathbbm{1}_{\{|\nabla_{\mathcal{D}} u^{(\ell)}| > \xi + 1 \}} \frac{|\nabla_{\mathcal{D}} u^{(\ell)}(x) - \nabla_{\mathcal{D}}P_{\mathcal{D}} \overline{u} (x, t^{(\ell)})|}{|\xi + 1|_{\rho}} \\
        & + \mathbbm{1}_{\{|\nabla_{\mathcal{D}} u^{(\ell)}| \leq \xi + 1 \}} \frac{ |\nabla_{\mathcal{D}} u^{(\ell)}(x) - \nabla_{\mathcal{D}}P_{\mathcal{D}} \overline{u} (x, t^{(\ell)})|^2}{|\xi + 1|_{\rho}}.
\end{aligned}\end{equation}
We split the following \(L^1\)-norm by indicator functions \(\mathbbm{1}_{\{|\nabla u ^{(\ell)}| > \xi + 1 \}}\) and \(\mathbbm{1}_{\{|\nabla_{\mathcal{D}} u^{(\ell)} | \leq \xi + 1 \}}\), then use \eqref{eq-proof-lem:main theorem step2 1} and the Cauchy-Schwarz inequality.
This leads to
\begin{align*}%\label{eq-proof-lem:main theorem step2 4}
    & \int_{\Omega} |\nabla_{\mathcal{D}} u^{(\ell)}  - \nabla_{\mathcal{D}}P_{\mathcal{D}} \overline{u} (t^{(\ell)})| \\
    = \, & 
        \int_{\Omega} \mathbbm{1}_{\{|\nabla_{\mathcal{D}} u^{(\ell)}| > \xi + 1 \}}  |\nabla_{\mathcal{D}} u^{(\ell)}  - \nabla_{\mathcal{D}}P_{\mathcal{D}} \overline{u} (t^{(\ell)})| 
         + \int_{\Omega}\mathbbm{1}_{\{|\nabla_{\mathcal{D}} u^{(\ell)}| \leq \xi + 1 \}} |\nabla_{\mathcal{D}} u^{(\ell)}  - \nabla_{\mathcal{D}}P_{\mathcal{D}} \overline{u} (t^{(\ell)})| \\
    \leq \, & 
        |\xi + 1|_{\rho} \int_{\Omega}   \frac{|\nabla_{\mathcal{D}} u^{(\ell)}  - \nabla_{\mathcal{D}}P_{\mathcal{D}} \overline{u} (t^{(\ell)})|^2}{ |\nabla_{\mathcal{D}} u^{(\ell)} |_{\rho}}
         + |\Omega|_2^{\frac{1}{2}} |\xi + 1|_{\rho}^{\frac{1}{2}} 
        \bigg\{
        \int_{\Omega}  \frac{|\nabla_{\mathcal{D}} u^{(\ell)}  - \nabla_{\mathcal{D}}P_{\mathcal{D}} \overline{u} (t^{(\ell)})|^2}{ |\nabla_{\mathcal{D}} u^{(\ell)} |_{\rho}}
        \bigg\}^{\frac{1}{2}}.
\end{align*}
Then multiply both sides by \(\delta t^{(\ell)}\) and sum over \(\ell\) from \(1\) to \(m\), apply the Cauchy-Schwarz inequality,
\begin{equation}\label{eq-proof-lem:main theorem step2 2} \begin{aligned}
    &\sum_{\ell = 1}^m \delta t^{(\ell)} \|\nabla_{\mathcal{D}} u^{(\ell)}  - \nabla_{\mathcal{D}}P_{\mathcal{D}} \overline{u} (t^{(\ell)})\|_{L^1(\Omega)}\\
    \leq \, &   
        |\xi + 1|_{\rho}    \sum_{\ell = 1}^m \delta t^{(\ell)} \int_{\Omega}  \frac{|\nabla_{\mathcal{D}} u^{(\ell)}  - \nabla_{\mathcal{D}}P_{\mathcal{D}} \overline{u} (t^{(\ell)})|^2}{ |\nabla_{\mathcal{D}} u^{(\ell)} |_{\rho}} \\
        & + |\Omega|_2^{\frac{1}{2}} T^{\frac{1}{2}} |\xi + 1|_{\rho}^{\frac{1}{2}} 
        \bigg\{
        \sum_{\ell = 1}^m \delta t^{(\ell)}\int_{\Omega}  \frac{|\nabla_{\mathcal{D}} u^{(\ell)}  - \nabla_{\mathcal{D}}P_{\mathcal{D}} \overline{u} (t^{(\ell)})|^2}{ |\nabla_{\mathcal{D}} u^{(\ell)} |_{\rho}}
        \bigg\}^{\frac{1}{2}}.
\end{aligned} \end{equation}
Notice that, from \eqref{eq-lem:main theorem step1 1} and \eqref{eq-lem:main theorem step1 2},
\begin{subequations}
\begin{align}
    & \sum_{\ell = 1}^m \delta t^{(\ell)}\int_{\Omega} \frac{|\nabla_{\mathcal{D}} u^{(\ell)}  - \nabla_{\mathcal{D}}P_{\mathcal{D}} \overline{u} (t^{(\ell)})|^2}{ |\nabla_{\mathcal{D}} u^{(\ell)} |_{\rho}} \nonumber\\
    \lesssim \, &  
        (e_{\mathcal{D}}^{\text{ini}} )^2 + (\delta t_{\mathcal{D}}^{\max})^2 +   h_{\mathcal{D}}^2 + \| \nabla_{\mathcal{D}} u -  \nabla_{\mathcal{D}}  P_{\mathcal{D}} \overline{u} \|_{L^1(0,T; L^1(\Omega))} h_{\mathcal{D}}  \label{eq-proof-lem:main theorem step2 3a}  \\
    \lesssim \, &  
        (e_{\mathcal{D}}^{\text{ini}} )^2 + (\delta t_{\mathcal{D}}^{\max})^2 +   h_{\mathcal{D}}. \label{eq-proof-lem:main theorem step2 3b}
\end{align}
\end{subequations}
Using \eqref{eq-proof-lem:main theorem step2 3b} on the first term of the right-hand side of \eqref{eq-proof-lem:main theorem step2 2}, and \eqref{eq-proof-lem:main theorem step2 3a} on the second term, we obtain the following
\begin{align}
    &\|\nabla_{\mathcal{D}} u   - \nabla_{\mathcal{D}}P_{\mathcal{D}} \overline{u} \|_{L^1(0,T; L^1(\Omega))}\notag\\
    \lesssim \, 
        & |\xi + 1|_{\rho}  \big\{  (e_{\mathcal{D}}^{\text{ini}} )^2 +  (\delta t_{\mathcal{D}}^{\max})^2 +   h_{\mathcal{D}} \big\} +  |\xi + 1|_{\rho}^{\frac{1}{2}}  \big\{ e_{\mathcal{D}}^{\text{ini}}  
        +  \delta t_{\mathcal{D}}^{\max}    \notag\\
        &  +  h_{\mathcal{D}} +   \| \nabla_{\mathcal{D}} u^{(\ell)} -  \nabla_{\mathcal{D}}  P_{\mathcal{D}} \overline{u}(t^{(\ell)})\|_{L^1(0,T; L^1(\Omega))}^{\frac{1}{2}}  h_{\mathcal{D}}^{\frac{1}{2}}   \big\} \notag\\
    \lesssim \,  
        & |\xi + 1|_{\rho} \big\{   
            (e_{\mathcal{D}}^{\text{ini}} )^2 
                + (\delta t_{\mathcal{D}}^{\max})^2 
                + h_{\mathcal{D}} \big\} 
                + |\xi + 1|_{\rho}^{\frac{1}{2}}  \bigg\{ e_{\mathcal{D}}^{\text{ini}} 
                +  \delta t_{\mathcal{D}}^{\max} \notag\\
         &   +  h_{\mathcal{D}}  +    \Lambda^{(4)} \| \nabla_{\mathcal{D}} u^{(\ell)} -  \nabla_{\mathcal{D}}  P_{\mathcal{D}} \overline{u}(t^{(\ell)})\|_{L^1(0,T; L^1(\Omega))} +  \frac{1}{\Lambda^{(4)}} h_{\mathcal{D}} \bigg\}
         \label{def:lambda4}
\end{align}
where the last line holds by Young inequality, for any \(\Lambda^{(4)}> 0\).
We then choose \(\Lambda^{(4)}\) depending on \(\xi\) and \(\rho\),
so that \(\| \nabla_{\mathcal{D}} u  -  \nabla_{\mathcal{D}}  P_{\mathcal{D}} \overline{u} \|_{L^1(0,T;L^1(\Omega))} \) can be absorbed in the left-hand side and thus get \eqref{eq-lem:main theorem step2}.
\end{proof}

Now, we prove the main theorem by improving the estimate \eqref{eq-lem:main theorem step1 1} using Lemma \ref{lem:main theorem step2}.
\begin{proof}[Proof of Theorem \ref{thm:GDM Error Estimate Theorem}]
By Lemma \ref{lem:main theorem step1} and Lemma \ref{lem:main theorem step2},
we plug the estimate \eqref{eq-lem:main theorem step2} back into \eqref{eq-lem:main theorem step1 1}
and apply the Young inequality to the last term on the right-hand side: for all \(m = 1, \ldots, M\),
\begin{equation}\label{eq-proof-thm:GDM Error Estimate Theorem 1} \begin{aligned}
    & \| \Pi_{\mathcal{D}} u^{(m)} - \Pi_{\mathcal{D}} P_{\mathcal{D}} \overline{u}(t^{(m)}) \|_{L^2(\Omega)}^2 
    + \Lambda^{(3)} \sum_{\ell = 1}^m  \delta t^{(\ell)}  \int_{\Omega}  \frac{|\nabla_{\mathcal{D}} u^{(\ell)} - \nabla_{\mathcal{D}}P_{\mathcal{D}} \overline{u} ( t^{(\ell)})|^2}{|\nabla_{\mathcal{D}} u^{(\ell)}|_{\rho}}\\
    \lesssim \, & 
        (e_{\mathcal{D}}^{\text{ini}})^2 + (\delta t_{\mathcal{D}}^{\max})^2 +  h_{\mathcal{D}}^2 +  \big\{ e_{\mathcal{D}}^{\text{ini}} + (e_{\mathcal{D}}^{\text{ini}})^2  +  \delta t_{\mathcal{D}}^{\max}  + h_{\mathcal{D}}  \big\} h_{\mathcal{D}} \\
    \lesssim \, & 
        (e_{\mathcal{D}}^{\text{ini}})^2 + (e_{\mathcal{D}}^{\text{ini}})^4 + (\delta t_{\mathcal{D}}^{\max})^2 +  h_{\mathcal{D}}^2.
\end{aligned}\end{equation} 
Thus, by the triangle inequality, 
the \(L^{\infty}(0,T; L^2(\Omega))\)-estimate for the approximation in \eqref{eq:GDM Error Estimate Theorem 1} of the solution follows from 
\eqref{eq-proof-thm:GDM Error Estimate Theorem 1} and \eqref{eq-lem:NL interpolation error estimate 4};
the \(L^1(0,T;L^1(\Omega))\)-estimate for the approximation of its gradient follows from \eqref{eq-lem:main theorem step2} and \eqref{eq-lem:NL interpolation error estimate 2}.
% ; that is, 
% \begin{equation*}
%     \max_{1 \leq \ell \leq M} \| \Pi_{\mathcal{D}} u^{(\ell)} -   \overline{u}(t^{(\ell)}) \|_{L^2(\Omega)} + \| \nabla_{\mathcal{D}} u - \nabla \overline{u} \|_{L^1(0,T; L^1(\Omega))} \lesssim e_{\mathcal{D}}^{\text{ini}} + (e_{\mathcal{D}}^{\text{ini}})^2 + \delta t_{\mathcal{D}}^{\max} +  h_{\mathcal{D}}. \qedhere
% \end{equation*} 
\end{proof}
\begin{proof}[Proof of Theorem \ref{thm:GDM Error Estimate Theorem 2}.]
    Now we prove the \(L^2(0,T; H^1(\Omega))\)-estimate. 
    Consider for any \(1 \leq n \leq N\), and the triangulation \(\mathcal{T}\) in the sense of Definition \eqref{def:mesh},
    % and \(\tau \in \mathcal{T}\) (see Definition \eqref{def:mesh}),
    \begin{align*}
            &  \Lambda^{(3)} \sum_{\ell = 1}^n \delta t^{(\ell)} \| \nabla_{\mathcal{D}} u^{(\ell)} - \nabla_{\mathcal{D}}P_{\mathcal{D}} \overline{u} ( t^{(\ell)})\|_{L^2(\Omega)}^2\\
            = \, &\sum_{\tau \in \mathcal{T}}\Lambda^{(3)} \sum_{\ell = 1}^n \delta t^{(\ell)} \| \nabla_{\mathcal{D}} u^{(\ell)} - \nabla_{\mathcal{D}}P_{\mathcal{D}} \overline{u} ( t^{(\ell)})\|_{L^2(\tau)}^2\\
            =  \, & \Lambda^{(3)} \sum_{\tau \in \mathcal{T}} \sum_{\ell = 1}^n  \delta t^{(\ell)} \int_{\tau} | \nabla_{\mathcal{D}} u^{(\ell)}|_{\rho} \frac{|\nabla_{\mathcal{D}} u^{(\ell)} - \nabla_{\mathcal{D}}P_{\mathcal{D}} \overline{u} ( t^{(\ell)})|^2}{|\nabla_{\mathcal{D}} u^{(\ell)}|_{\rho}}\\
            \leq \, & \Lambda^{(3)} \sum_{\tau \in \mathcal{T}} \sum_{\ell = 1}^n \big(\rho + \| \nabla_{\mathcal{D}} u^{(\ell)}\|_{L^{\infty}(\tau)}\big) \delta t^{(\ell)} \int_{\tau} \frac{|\nabla_{\mathcal{D}} u^{(\ell)} - \nabla_{\mathcal{D}}P_{\mathcal{D}} \overline{u} ( t^{(\ell)})|^2}{|\nabla_{\mathcal{D}} u^{(\ell)}|_{\rho}}\\
            \leq \, & \Lambda^{(3)}\sum_{\tau \in \mathcal{T}} \sum_{\ell = 1}^n (\rho + \| \nabla_{\mathcal{D}}P_{\mathcal{D}} \overline{u} ( t^{(\ell)})\|_{L^{\infty}(\tau)}) \, \delta t^{(\ell)} \int_{\tau} \frac{|\nabla_{\mathcal{D}} u^{(\ell)} - \nabla_{\mathcal{D}}P_{\mathcal{D}} \overline{u} ( t^{(\ell)})|^2}{|\nabla_{\mathcal{D}} u^{(\ell)}|_{\rho}}\\
            & + \Lambda^{(3)} \sum_{\tau \in \mathcal{T}} \sum_{\ell = 1}^n   \| \nabla_{\mathcal{D}} u^{(\ell)} - \nabla_{\mathcal{D}}P_{\mathcal{D}} \overline{u} ( t^{(\ell)})\|_{L^{\infty}(\tau)} \, \delta t^{(\ell)} \int_{\tau} \frac{|\nabla_{\mathcal{D}} u^{(\ell)} - \nabla_{\mathcal{D}}P_{\mathcal{D}} \overline{u} ( t^{(\ell)})|^2}{|\nabla_{\mathcal{D}} u^{(\ell)}|_{\rho}} \\
            =: & \, T_1 + T_2.
    \end{align*}
    Since \(\rho\) is a fixed constant and \(\| \nabla_{\mathcal{D}}P_{\mathcal{D}} \overline{u} ( t^{(\ell)})\|_{L^{\infty}(\tau)}\) is bounded above for all \(\ell\) by the estimate \eqref{eq-lem:NL interpolation error estimate 1},  by \eqref{eq-proof-thm:GDM Error Estimate Theorem 1}, we have 
    \begin{equation*}
    \begin{aligned}
        T_1 %= \, & \Lambda^{(3)} \sum_{\ell = 1}^n (\rho + \| \nabla_{\mathcal{D}}P_{\mathcal{D}} \overline{u} ( t^{(\ell)})\|_{L^{\infty}(\tau)}) \, \delta t^{(\ell)} \int_{\Omega} \frac{|\nabla_{\mathcal{D}} u^{(\ell)} - \nabla_{\mathcal{D}}P_{\mathcal{D}} \overline{u} ( t^{(\ell)})|^2}{|\nabla_{\mathcal{D}} u^{(\ell)}|_{\rho}} \\
        \lesssim \, & (e_{\mathcal{D}}^{\text{ini}})^2 + (e_{\mathcal{D}}^{\text{ini}})^4 + (\delta t_{\mathcal{D}}^{\max})^2 +  h_{\mathcal{D}}^2.
    \end{aligned}
    \end{equation*}
    Now we consider the term \(T_2\). 
    By the discrete Lebesgue estimates \cite[Lemma 1.25]{Pietro2020TheHH}, for any \(\tau \in \mathcal{T}\),
    \begin{equation*}
       \| \nabla_{\mathcal{D}} u^{(\ell)} - \nabla_{\mathcal{D}} P_{\mathcal{D}} \overline{u}(t^{(\ell)})\|_{L^{\infty}(\tau)} \leq \frac{\| \nabla_{\mathcal{D}} u^{(\ell)} - \nabla_{\mathcal{D}} P_{\mathcal{D}} \overline{u}(t^{(\ell)})\|_{L^{2}(\tau)}}{|\tau|_2^{1/2}}.
    \end{equation*}
    This and the Young inequality imply that 
    \begin{align*} 
            T_2 \leq \, & \Lambda^{(3)} \sum_{\tau \in \mathcal{T}}  \sum_{\ell = 1}^n \frac{\| \nabla_{\mathcal{D}} u^{(\ell)} - \nabla_{\mathcal{D}} P_{\mathcal{D}} \overline{u}(t^{(\ell)})\|_{L^{2}(\tau)}}{ |\tau|_2^{1/2}} \, \delta t^{(\ell)} \int_{\tau} \frac{|\nabla_{\mathcal{D}} u^{(\ell)} - \nabla_{\mathcal{D}}P_{\mathcal{D}} \overline{u} ( t^{(\ell)})|^2}{|\nabla_{\mathcal{D}} u^{(\ell)}|_{\rho}}\\
             \leq \, & \frac{\Lambda^{(3)}}{2}  \sum_{\tau \in \mathcal{T}}  \sum_{\ell = 1}^n \delta t^{(\ell)} \| \nabla_{\mathcal{D}} u^{(\ell)} - \nabla_{\mathcal{D}} P_{\mathcal{D}} \overline{u}(t^{(\ell)})\|_{L^{2}(\tau)}^2 \\
              & + \frac{\Lambda^{(3)}}{2} \sum_{\tau \in \mathcal{T}}  \sum_{\ell = 1}^n  \delta t^{(\ell)} \frac{1}{{ |\tau|_2}} \bigg(\int_{\tau} \frac{|\nabla_{\mathcal{D}} u^{(\ell)} - \nabla_{\mathcal{D}}P_{\mathcal{D}} \overline{u} ( t^{(\ell)})|^2}{|\nabla_{\mathcal{D}} u^{(\ell)}|_{\rho}}\bigg)^2 =: T_{2,1} + T_{2,2}.
    \end{align*}
    The term \(T_{2,1}\) can be written as
    \begin{equation*}
        T_{2,1} =\frac{\Lambda^{(3)}}{2}   \sum_{\ell = 1}^n \delta t^{(\ell)} \| \nabla_{\mathcal{D}} u^{(\ell)} - \nabla_{\mathcal{D}} P_{\mathcal{D}} \overline{u}(t^{(\ell)})\|_{L^{2}(\Omega)}^2.
    \end{equation*}
    Now consider the term \(T_{2,2}\). 
    Recalling that \(h_{\mathcal{T}}^2 \lesssim |\tau|_2\), 
    by \eqref{eq-proof-thm:GDM Error Estimate Theorem 1},  
    we have 
    \begin{align*} 
            T_{2,2} \lesssim \, & \frac{\Lambda^{(3)}}{2} \sum_{\tau \in \mathcal{T}}  \sum_{\ell = 1}^n  \delta t^{(\ell)} \frac{1}{h_{\mathcal{T}}^2} \bigg(\int_{\tau} \frac{|\nabla_{\mathcal{D}} u^{(\ell)} - \nabla_{\mathcal{D}}P_{\mathcal{D}} \overline{u} ( t^{(\ell)})|^2}{|\nabla_{\mathcal{D}} u^{(\ell)}|_{\rho}}\bigg)^2\\
            \lesssim \, & \frac{\Lambda^{(3)}}{2}   \frac{1}{h_{\mathcal{T}}^2} \bigg( \sum_{\ell = 1}^n  \delta t^{(\ell)} \int_{\Omega} \frac{|\nabla_{\mathcal{D}} u^{(\ell)} - \nabla_{\mathcal{D}}P_{\mathcal{D}} \overline{u} ( t^{(\ell)})|^2}{|\nabla_{\mathcal{D}} u^{(\ell)}|_{\rho}}\bigg)^2 \\
            \lesssim \, & \frac{\Lambda^{(3)}}{2} \frac{(e_{\mathcal{D}}^{\text{ini}})^4 + (e_{\mathcal{D}}^{\text{ini}})^8 + (\delta t_{\mathcal{D}}^{\max})^4 +  h_{\mathcal{D}}^4}{h_{\mathcal{T}}^2}.
    \end{align*}
    Collecting estimates for \(T_1\) and \(T_2\), rearranging the terms, we arrive at 
    \begin{align*} 
        & \Lambda^{(3)} \sum_{\ell = 1}^n \delta t^{(\ell)} \| \nabla_{\mathcal{D}} u^{(\ell)} - \nabla_{\mathcal{D}}P_{\mathcal{D}} \overline{u} ( t^{(\ell)})\|_{L^2(\tau)}^2 \\
        \lesssim \, &  (e_{\mathcal{D}}^{\text{ini}})^2 + (e_{\mathcal{D}}^{\text{ini}})^4 + (\delta t_{\mathcal{D}}^{\max})^2 +  h_{\mathcal{D}}^2 + \frac{(e_{\mathcal{D}}^{\text{ini}})^4 + (e_{\mathcal{D}}^{\text{ini}})^8 + (\delta t_{\mathcal{D}}^{\max})^4 +  h_{\mathcal{D}}^4}{h_{\mathcal{T}}^2}.
    \end{align*}
    Under the assumptions \eqref{eq-def:GDM Error Estimate Theorem assumption 2} and \eqref{eq-thm:GDM Error Estimate Theorem 2 assumption}, 
    we deduce
    \begin{equation*}
        \Lambda^{(3)} \sum_{\ell = 1}^n \delta t^{(\ell)} \| \nabla_{\mathcal{D}} u^{(\ell)} - \nabla_{\mathcal{D}}P_{\mathcal{D}} \overline{u} ( t^{(\ell)})\|_{L^2(\Omega)}^2 \lesssim h_{\mathcal{T}}^2. \qedhere
    \end{equation*}
\end{proof}

    \section{Numerical examples}\label{sec:numerical example}
In this section, we present numerical examples of gradient discretisations for the model \eqref{model:rtvf} via:
\begin{itemize}
    \item \textbf{Conforming $\mathbb{P}^1$ FEM (CP1FEM).} Let \(\mathcal{T}\) be a conforming triangular mesh in the sense of Definition \ref{def:mesh} of the domain \(\Omega\) with mesh size \(h\).
Each \(v_{\mathcal{D}} \in X_{\mathcal{D}}\) is a vector of values at the vertices of the mesh \(\mathcal{T}\), 
\(\Pi_{\mathcal{D}} v_{\mathcal{D}}\) is the continuous piecewise linear function on \(\mathcal{T}\) that takes its values at the vertices,
and \(\nabla_{\mathcal{D}} v_{\mathcal{D}} = \nabla(\Pi_{\mathcal{D}} v_{\mathcal{D}})\) is the piecewise constant function on \(\mathcal{T}\). Then the gradient scheme \eqref{def:GS} is the standard $\mathbb{P}^1$ FEM scheme for the model \eqref{model:rtvf}.
    \item \textbf{Nonconforming $\mathbb{P}^1$ FEM (NCP1FEM).} Under the mesh \(\mathcal{T}\), each  \(v_{\mathcal{D}} \in X_{\mathcal{D}}\) is a vector of values at the centre of mass of the edges of \(\mathcal{T}\), 
\(\Pi_{\mathcal{D}} v_{\mathcal{D}}\) is piecewise linear function on \(\mathcal{T}\) that takes its values at these centre of mass,
and \(\nabla_{\mathcal{D}} v_{\mathcal{D}} = \nabla_{\mathcal{T}}(\Pi_{\mathcal{D}} v_{\mathcal{D}})\) is the broken gradient of \(\Pi_{\mathcal{D}} v_{\mathcal{D}}\). Then the gradient scheme \eqref{def:GS} gives a nonconforming $\mathbb{P}^1$ finite element approximation for the model \eqref{model:rtvf}.
\end{itemize} 
These methods are known to satisfy the properties of GDM under standard regularity of mesh, we refer to \cite{Droniou2018TheGD} for detailed analysis of these methods.
We emphasize that GDM serves purely as an analytical tool for studying the schemes. The actual numerical implementations remain scheme-specific and utilize standard procedures for each method.

The numerical simulations for the model \eqref{model:rtvf} are carried out on a unit square \(\Omega = (0,1)^2\) (with triangular mesh data given in Table \ref{tab:tri_mesh}) and final time \(T = 1\).
Since the diffusion term is non-linear, a non-linear iterative method is required to compute the approximate solution. 
Newton's method is a conventional choice due to its quadratic convergence.
At each time step, the initial guess in the Newton iterations is the approximate solution obtained in the previous time step,
and the iterations stop once the residue is less than the tolerance \(\textnormal{tol}\).
To validate our theoretical results, the relative errors between the approximate solution \(U\) and the manufactured solution \(\overline{u}\) are computed:
\begin{align*}
    E_{1} = \frac{\| \Pi_{\mathcal{D}}U - \overline{u}\|_{L^{\infty}(0,T; L^2(\Omega))}}{\| \overline{u}\|_{L^{\infty}(0,T; L^2(\Omega))}}, 
    E_{2} = \frac{\| \nabla_{\mathcal{D}} U - \nabla \overline{u}\|_{L^{1}(0,T; L^1(\Omega))}}{\| \nabla\overline{u}\|_{L^{1}(0,T; L^1(\Omega))}},  
    E_{3} = \frac{\| \nabla_{\mathcal{D}} U - \nabla \overline{u}\|_{L^{2}(0,T; L^2(\Omega))}}{\| \nabla\overline{u}\|_{L^{2}(0,T; L^2(\Omega))}}.
\end{align*}
The implementations are available at \url{https://github.com/HuatengZhu/matlab-NLGF}.
\begin{table}[ht!]
    \centering
    \caption{Test 1: Data for the triangular meshes.}%
    \label{tab:tri_mesh}
    \begin{tabular}{|c|c|c|c|c|}
    \hline
     Mesh& Size& Nb. cells & Nb. edges&Nb. vertices\\
    \hline
        mesh1&    0.250&   56&     92&     37\\%16    
        mesh2&    0.125&   224&    352&    129\\%64
        mesh3&    0.063&   896&    1376&   481\\%256
        mesh4&    0.050&   1400&   2140&   741\\%400
        mesh5&    0.031&   3584&   5440&   1857\\%1024
        mesh6&    0.016&   14336&  21632&  7297\\%4096
    \hline
    \end{tabular}%
\end{table}%

\subsection*{Test 1} 
The aim of this test is to demonstrate and validate the convergence rate of CP1FEM and NCP1FEM for the model \eqref{model:rtvf} using a smooth manufactured solution. 
For this purpose,
we set the regularisation parameter to  \(\rho = 1\), 
the time step to \(\delta t = h\), 
the tolerance to \(\textnormal{tol} = 10^{-8}\),
employing the manufactured solution \[\overline{u} = \cos(t) \cos(\pi x) \cos(\pi y), \quad (x,y) \in \Omega.\]
A comparison of efficiency showed that CP1FEM required approximately 14 Newton iterations across all meshes, whereas NCP1FEM was significantly more efficient, requiring only around 2 Newton iterations on average; see Table \ref{tab:complexity FEM}.
The log-log convergence plots (see Figure \ref{Fig:orderofcv}) confirm the theoretical results:
the convergence rates for \(E_1\), \(E_2\), and \(E_3\) are around 1,
as predicted by Theorems \ref{thm:GDM Error Estimate Theorem} and \ref{thm:GDM Error Estimate Theorem 2}. 
% The magnitude of the errors is documented in detail within Table \ref{tab:errorCFEM} and Table \ref{tab:errorNCFEM}.

\begin{figure}[t]
    \centering
    \begin{subfigure}[t]{0.32\textwidth}
        \centering
        \resizebox{1\textwidth}{!}{% This file was created by matlab2tikz.
%
%The latest updates can be retrieved from
%  http://www.mathworks.com/matlabcentral/fileexchange/22022-matlab2tikz-matlab2tikz
%where you can also make suggestions and rate matlab2tikz.
%
\definecolor{mycolor1}{rgb}{0.06600,0.44300,0.74500}%
\definecolor{mycolor2}{rgb}{0.12941,0.12941,0.12941}%
\definecolor{mycolor3}{rgb}{0.23100,0.66600,0.19600}%
\begin{tikzpicture}

\begin{axis}[%
width=2.965in,
height=2.762in,
at={(1.806in,0.447in)},
scale only axis,
xmode=log,
xmin=0.005,
xmax=0.5,
xminorticks=false,
xlabel style={font=\Large\color{mycolor2}},
xlabel={$h$},
ymode=log,
ymin=0.0005,
ymax=0.7,
yminorticks=false,
axis background/.style={fill=white},
legend style={at={(0.03,0.97)}, anchor=north west, legend columns=2, legend cell align=left, align=left}
]

% \node[centered, align=center, inner sep=0, font=\color{mycolor2}]
% at (axis cs:0.156,0.005) {1};
% \node[right, align=left, inner sep=0, font=\color{mycolor2}]
% at (axis cs:0.297,0.01) {1};
\addplot [color=mycolor3, mark=*, mark options={solid, mycolor3}]
  table[row sep=crcr]{%
0.25	0.232717114471374\\
0.125	0.0521203572816063\\
0.0625	0.0168776430035039\\
0.03125	0.00697323525968433\\
0.015625	0.00294381837579355\\
0.0078125	0.00113349679947479\\
};
\addlegendentry{CP1FEM}
\addplot [color=mycolor1, mark=asterisk, mark options={solid, mycolor1}]
  table[row sep=crcr]{%
0.25	0.137308567114891\\
0.125	0.0817506875369685\\
0.0625	0.0466461626145092\\
0.03125	0.0243668281716981\\
0.015625	0.0133145981396147\\
0.0078125	0.0061284215123162\\
};
\addlegendentry{NCP1FEM}

\addplot [color=black, line width=1.0pt, forget plot]
  table[row sep=crcr]{%
0.09	0.006\\
0.27	0.006\\
};
\addplot [color=black, line width=1.0pt, forget plot]
  table[row sep=crcr]{%
0.27	0.006\\
0.27	0.018\\
};
\addplot [color=black, line width=1.0pt, forget plot]
  table[row sep=crcr]{%
0.09	0.006\\
0.27	0.018\\
};
\node[centered, align=center, inner sep=0, font=\color{mycolor2}]
at (axis cs:0.156,0.0045) {1};
\node[right, align=left, inner sep=0, font=\color{mycolor2}]
at (axis cs:0.297,0.01) {1};
\end{axis}

\end{tikzpicture}%}
        \caption{Error \(E_1\)}
        % \label{Fig:E1}
    \end{subfigure}%
    \hfill
    \begin{subfigure}[t]{0.32\textwidth}
        \centering
        \resizebox{1\textwidth}{!}{% This file was created by matlab2tikz.
%
%The latest updates can be retrieved from
%  http://www.mathworks.com/matlabcentral/fileexchange/22022-matlab2tikz-matlab2tikz
%where you can also make suggestions and rate matlab2tikz.
%
\definecolor{mycolor1}{rgb}{0.06600,0.44300,0.74500}%
\definecolor{mycolor2}{rgb}{0.12941,0.12941,0.12941}%
\definecolor{mycolor3}{rgb}{0.23100,0.66600,0.19600}%
\begin{tikzpicture}

\begin{axis}[%
width=2.965in,
height=2.762in,
at={(5.708in,0.447in)},
scale only axis,
xmode=log,
xmin=0.005,
xmax=0.5,
xminorticks=false,
xlabel style={font=\Large\color{mycolor2}},
xlabel={$h$},
ymode=log,
ymin=0.0005,
ymax=0.7,
yminorticks=false,
axis background/.style={fill=white},
legend style={at={(0.03,0.97)}, anchor=north west, legend columns=2, legend cell align=left, align=left}
]

\addplot [color=mycolor3, mark=*, mark options={solid, mycolor3}]
  table[row sep=crcr]{%
0.25	0.238531456046486\\
0.125	0.0571162115477063\\
0.0625	0.0206125087491065\\
0.03125	0.00924057733037063\\
0.015625	0.00427902042108399\\
0.0078125	0.0019423997941794\\
};
\addlegendentry{CP1FEM}

\addplot [color=mycolor1, mark=asterisk, mark options={solid, mycolor1}]
  table[row sep=crcr]{%
0.25	0.302007274810191\\
0.125	0.130552531785647\\
0.0625	0.0740396691756868\\
0.03125	0.0357285257838499\\
0.015625	0.0179482808696559\\
0.0078125	0.00913738832872458\\
};
\addlegendentry{NCP1FEM}

\addplot [color=black, line width=1.0pt, forget plot]
  table[row sep=crcr]{%
0.09	0.006\\
0.27	0.006\\
};
\addplot [color=black, line width=1.0pt, forget plot]
  table[row sep=crcr]{%
0.27	0.006\\
0.27	0.018\\
};
\addplot [color=black, line width=1.0pt, forget plot]
  table[row sep=crcr]{%
0.09	0.006\\
0.27	0.018\\
};
\node[centered, align=center, inner sep=0, font=\color{mycolor2}]
at (axis cs:0.156,0.0045) {1};
\node[right, align=left, inner sep=0, font=\color{mycolor2}]
at (axis cs:0.297,0.01) {1};
\end{axis}

\end{tikzpicture}%}
        \caption{Error \(E_2\)}
        % \label{Fig:E2}
    \end{subfigure}
     \hfill
    \begin{subfigure}[t]{0.32\textwidth}
        \centering
        \resizebox{1\textwidth}{!}{% This file was created by matlab2tikz.
%
%The latest updates can be retrieved from
%  http://www.mathworks.com/matlabcentral/fileexchange/22022-matlab2tikz-matlab2tikz
%where you can also make suggestions and rate matlab2tikz.
%
\definecolor{mycolor1}{rgb}{0.06600,0.44300,0.74500}%
\definecolor{mycolor2}{rgb}{0.12941,0.12941,0.12941}%
\definecolor{mycolor3}{rgb}{0.23100,0.66600,0.19600}%
\begin{tikzpicture}

\begin{axis}[%
width=2.965in,
height=2.762in,
at={(9.61in,0.447in)},
scale only axis,
xmode=log,
xmin=0.005,
xmax=0.5,
xminorticks=false,
xlabel style={font= \Large\color{mycolor2}},
xlabel={$h$},
ymode=log,
ymin=0.0005,
ymax=0.7,
yminorticks=false,
axis background/.style={fill=white},
legend style={at={(0.03,0.97)}, anchor=north west, legend columns=2, legend cell align=left, align=left}
]
\addplot [color=mycolor3, mark=*, mark options={solid, mycolor3}]
  table[row sep=crcr]{%
0.25	0.23604378034304\\
0.125	0.0588325730873486\\
0.0625	0.0214876196761341\\
0.03125	0.00962314451519746\\
0.015625	0.00445469908688483\\
0.0078125	0.00202796167243279\\
};
\addlegendentry{CP1FEM}

\addplot [color=mycolor1, mark=asterisk, mark options={solid, mycolor1}]
  table[row sep=crcr]{%
0.25	0.443623549625686\\
0.125	0.152950341046065\\
0.0625	0.0868476032178225\\
0.03125	0.0415435232558743\\
0.015625	0.0206843304201662\\
0.0078125	0.0106652398439837\\
};
\addlegendentry{NCP1FEM}

\addplot [color=black, line width=1.0pt, forget plot]
  table[row sep=crcr]{%
0.09	0.006\\
0.27	0.006\\
};
\addplot [color=black, line width=1.0pt, forget plot]
  table[row sep=crcr]{%
0.27	0.006\\
0.27	0.018\\
};
\addplot [color=black, line width=1.0pt, forget plot]
  table[row sep=crcr]{%
0.09	0.006\\
0.27	0.018\\
};
\node[centered, align=center, inner sep=0, font=\color{mycolor2}]
at (axis cs:0.156,0.0045) {1};
\node[right, align=left, inner sep=0, font=\color{mycolor2}]
at (axis cs:0.297,0.01) {1};
\end{axis}

\end{tikzpicture}%}
        \caption{Error \(E_3\)}
        % \label{Fig:E3}
    \end{subfigure}
    \caption{Test 1:  Errors w.r.t.~$h$: CP1FEM vs. NCP1FEM.}
    \label{Fig:orderofcv}
\end{figure}
\begin{table}   
    \centering
    \caption{Test 1: Computational complexity.}
    \subfloat[CP1FEM]{%
    \begin{tabular}{|c|c|c|c|}
    \hline
        Mesh  &  DoF & Avg. Newt. & Avg. Cond.\\ 
    \hline
          mesh1 & 37 & 15  & 5.06e+01 \\    
         mesh2 &  129 & 15 & 1.09e+02  \\
          mesh3 & 481& 15  &  2.23e+02\\
          mesh4 & 741& 14  &  4.51e+02\\ 
         mesh5 &  1857& 13 &  8.92e+02 \\ 
         mesh6 &  7297& 11 &  1.75e+03 \\ 
    \hline
    \end{tabular}
    }
    \hfill
    \subfloat[NCP1FEM]{%%
    \begin{tabular}{|c|c|c|c|}
    \hline
         Mesh &  DoF & Avg. Newt. & Avg. Cond. \\ 
    \hline
         mesh1 & 92& 2  & 2.32e+02\\    
         mesh2 & 352& 2 & 4.53e+02 \\
         mesh3 &  1376& 2  & 9.98e+02 \\
         mesh4 &  2140& 2  & 2.12e+03 \\ 
         mesh5 &  5440& 2 & 4.25e+03  \\ 
         mesh6 & 21632&  2 & 8.47e+03 \\ 
    \hline
    \end{tabular}
    }
    \label{tab:complexity FEM}
\end{table}

\subsection*{Test 2} 

The aim of this test is to explore the error of CP1FEM and NCP1FEM for the model \eqref{model:rtvf} with various values of \(\rho \in \{1, 0.1, 0.01, 0.001, h^{1/2}, h\}\), using a different manufactured solution.
In this case, we choose the time step \(\delta t = h\) and the manufactured solution 
\[\overline{u} = \cos(t) x^3 (1-x)^2 y^3 (1-y)^2, \quad (x,y) \in \Omega.\]
We denote by \(E_i^{(\rho)}\) as relative error \(E_i\) of the model \eqref{model:rtvf} for a given \(\rho\).

Numerical results reveal that NCP1FEM is significantly more efficient,
demonstrating a typical run-time of approximately 400 seconds, compared to the 1400 seconds required by CP1FEM,
while the errors \(E_2\) and \(E_3\) of CP1FEM (see Figures \ref{Fig:orderofcv w.r.t. h & rho CFEM} and \ref{Fig:orderofcv w.r.t. rho CFEM 1}) are slightly lower than those of  NCP1FEM (see  Figures \ref{Fig:orderofcv w.r.t. h & rho NCFEM} and \ref{Fig:orderofcv w.r.t. rho NCFEM 1}).

A key observation related to the parameter \(\rho\) is its impact on the error magnitude and the convergence rate.
As \(\rho\) decreases,
the magnitude of \(E_1^{(\rho)}\) increases for both schemes, 
yet the magnitudes of \(E_2^{(\rho)}\) and \(E_3^{(\rho)}\) remain nearly unchanged.
Moreover, the convergence rate of \(E_1^{(\rho)}\) is directly influenced by \(\rho\), see Figures \ref{Fig:orderofcv w.r.t. h & rho CFEM} (A), \ref{Fig:orderofcv w.r.t. h & rho NCFEM} (A);
when \(\rho = 1\), the convergence rate for \(E_1^{(1)}\)  is around order 2 before saturation;
when \(\rho = h^{1/2}\), the convergence rate for \(E_1^{(1)}\) is around order 1.5;
when \(\rho = h\), the convergence rate for \(E_1^{(1)}\) is around order 1.
This dependency suggests that, for this test case, the hidden constant in the upper bound of \(E_1^{(\rho)}\) may depend on \(\rho^{-1}\).
With furthur investigations on \(\rho\)-dependency of the errors, we observe that the convergence rate for \(E_1^{(\rho)}\) with respect to \(\rho\) is around order \(-1\); see Figures \ref{Fig:orderofcv w.r.t. rho CFEM 1} (A), \ref{Fig:orderofcv w.r.t. rho NCFEM 1} (A).
While the influence of \(\rho\) on the convergence rate of \(E_1^{(\rho)}\) is clear, its effect on  the convergence rates of \(E_2^{(\rho)}\) and \(E_3^{(\rho)}\) seems to only be visible for very small values of $\rho$.

\begin{figure}[t]
    \centering
    \begin{subfigure}[t]{0.32\textwidth}
        \centering
         \resizebox{1\textwidth}{!}{% This file was created by matlab2tikz.
%
%The latest updates can be retrieved from
%  http://www.mathworks.com/matlabcentral/fileexchange/22022-matlab2tikz-matlab2tikz
%where you can also make suggestions and rate matlab2tikz.
%
\definecolor{mycolor1}{rgb}{0.06600,0.44300,0.74500}%
\definecolor{mycolor2}{rgb}{0.12941,0.12941,0.12941}%
\definecolor{mycolor3}{rgb}{0.23100,0.66600,0.19600}%
\definecolor{mycolor4}{rgb}{0.86600,0.32900,0.00000}%
\begin{tikzpicture}

\begin{axis}[%
width=7cm,
height=7cm,
scale only axis,
xmode=log,
xmin=0.0013,
xmax=1.36,
xminorticks=false,
xlabel style={font=\color{mycolor2}},
xlabel={$h$},
ymode=log,
ymin=0.0001,
ymax=8.58942065609222,
yminorticks=false,
axis background/.style={fill=white},
legend style={at={(0.03,0.97)}, anchor=north west, legend cell align=left, align=left}
]

\addplot [color=mycolor1, mark=*, mark options={solid, mycolor1}]
  table[row sep=crcr]{%
0.25	0.665032335602349\\
0.125	0.147922194469749\\
0.0625	0.028773438909269\\
0.03125	0.00329772928337885\\
0.015625	0.00145691653710782\\
0.0078125	0.000168\\
};
\addlegendentry{$\rho\text{ = 1}$}

\addplot [color=black, line width=1.0pt, forget plot]
  table[row sep=crcr]{%
0.09	0.006\\
0.27	0.006\\
};
\addplot [color=black, line width=1.0pt, forget plot]
  table[row sep=crcr]{%
0.27	0.006\\
0.27	0.018\\
};
\addplot [color=black, line width=1.0pt, forget plot]
  table[row sep=crcr]{%
0.09	0.006\\
0.27	0.018\\
};
\node[centered, align=center, inner sep=0, font=\color{mycolor2}]
at (axis cs:0.156,0.0045) {1};
\node[right, align=left, inner sep=0, font=\color{mycolor2}]
at (axis cs:0.297,0.01) {1};

\addplot [color=mycolor3, mark=*, mark options={solid, mycolor3}]
  table[row sep=crcr]{%
0.25	1.38996700080975\\
0.125	0.473573767910165\\
0.0625	0.162155170375077\\
0.03125	0.0550694152085344\\
0.015625	0.0182977095827042\\
0.0078125	0.0058666927028307\\
};
\addlegendentry{$\rho\text{ = h}^{\text{1/2}}$}

\addplot [color=mycolor4, mark=*, mark options={solid, mycolor4}]
  table[row sep=crcr]{%
0.25	2.84978864125111\\
0.125	1.40151972647524\\
0.0625	0.69952071917909\\
0.03125	0.351017955179886\\
0.015625	0.176045090941556\\
0.0078125	0.0881852743545491\\
};
\addlegendentry{$\rho\text{ = h}$}

\end{axis}
\end{tikzpicture}%}
        \caption{Error \(E_1^{(\rho)}\)}
    \end{subfigure}%
    \hfill
    \begin{subfigure}[t]{0.32\textwidth}
        \centering
         \resizebox{1\textwidth}{!}{% This file was created by matlab2tikz.
%
%The latest updates can be retrieved from
%  http://www.mathworks.com/matlabcentral/fileexchange/22022-matlab2tikz-matlab2tikz
%where you can also make suggestions and rate matlab2tikz.
%
\definecolor{mycolor1}{rgb}{0.06600,0.44300,0.74500}%
\definecolor{mycolor2}{rgb}{0.12941,0.12941,0.12941}%
\definecolor{mycolor3}{rgb}{0.23100,0.66600,0.19600}%
\definecolor{mycolor4}{rgb}{0.86600,0.32900,0.00000}%
\begin{tikzpicture}

\begin{axis}[%
width=7cm,
height=7cm,
scale only axis,
xmode=log,
xmin=0.0013,
xmax=1.36,
xminorticks=false,
xlabel style={font=\color{mycolor2}},
xlabel={$h$},
ymode=log,
ymin=0.000197782957908523,
ymax=3.66299164134895,
yminorticks=false,
axis background/.style={fill=white},
legend style={at={(0.03,0.97)}, anchor=north west, legend cell align=left, align=left}
]
\addplot [color=mycolor1, mark=*, mark options={solid, mycolor1}]
  table[row sep=crcr]{%
0.25	0.206152324223097\\
0.125	0.057389370747214\\
0.0625	0.0237878031236634\\
0.03125	0.0112143448836325\\
0.015625	0.00552078119280779\\
0.0078125	0.00274933403465416\\
};
\addlegendentry{$\rho\text{ = 1}$}

\addplot [color=black, line width=1.0pt, forget plot]
  table[row sep=crcr]{%
0.09	0.006\\
0.27	0.006\\
};
\addplot [color=black, line width=1.0pt, forget plot]
  table[row sep=crcr]{%
0.27	0.006\\
0.27	0.018\\
};
\addplot [color=black, line width=1.0pt, forget plot]
  table[row sep=crcr]{%
0.09	0.006\\
0.27	0.018\\
};
\node[centered, align=center, inner sep=0, font=\color{mycolor2}]
at (axis cs:0.156,0.0045) {1};
\node[right, align=left, inner sep=0, font=\color{mycolor2}]
at (axis cs:0.297,0.01) {1};
\addplot [color=mycolor3, mark=*, mark options={solid, mycolor3}]
  table[row sep=crcr]{%
0.25	0.209973429145426\\
0.125	0.0582167205290612\\
0.0625	0.0238867002507547\\
0.03125	0.0112257937391868\\
0.015625	0.00552058849965087\\
0.0078125	0.00274800751518197\\
};
\addlegendentry{$\rho\text{ = h}^{\text{1/2}}$}

\addplot [color=mycolor4, mark=*, mark options={solid, mycolor4}]
  table[row sep=crcr]{%
0.25	0.211909365928811\\
0.125	0.0585617851731707\\
0.0625	0.0239347589302422\\
0.03125	0.0112408605235547\\
0.015625	0.00553209127300957\\
0.0078125	0.00276389444707746\\
};
\addlegendentry{$\rho\text{ = h}$}

\end{axis}

\end{tikzpicture}%}
        \caption{Error \(E_2^{(\rho)}\)}
    \end{subfigure}
     \hfill
    \begin{subfigure}[t]{0.32\textwidth}
        \centering
         \resizebox{1\textwidth}{!}{% This file was created by matlab2tikz.
%
%The latest updates can be retrieved from
%  http://www.mathworks.com/matlabcentral/fileexchange/22022-matlab2tikz-matlab2tikz
%where you can also make suggestions and rate matlab2tikz.
%
\definecolor{mycolor1}{rgb}{0.06600,0.44300,0.74500}%
\definecolor{mycolor2}{rgb}{0.12941,0.12941,0.12941}%
\definecolor{mycolor3}{rgb}{0.23100,0.66600,0.19600}%
\definecolor{mycolor4}{rgb}{0.86600,0.32900,0.00000}%
\begin{tikzpicture}
\begin{axis}[%
width=7cm,
height=7cm,
scale only axis,
xmode=log,
xmin=0.0013,
xmax=1.36,
xminorticks=false,
xlabel style={font=\color{mycolor2}},
xlabel={$h$},
ymode=log,
ymin=0.000156926514358767,
ymax=2.90631971773852,
yminorticks=false,
axis background/.style={fill=white},
legend style={at={(0.03,0.97)}, anchor=north west, legend cell align=left, align=left}
]
\addplot [color=mycolor1, mark=*, mark options={solid, mycolor1}]
  table[row sep=crcr]{%
0.25	0.21187428478186\\
0.125	0.0589090902677681\\
0.0625	0.0244481287110046\\
0.03125	0.0117828169913396\\
0.015625	0.00585171158822876\\
0.0078125	0.00292188932666602\\
};
\addlegendentry{$\rho\text{ = 1}$}

\addplot [color=mycolor3, mark=*, mark options={solid, mycolor3}]
  table[row sep=crcr]{%
0.25	0.214570706730641\\
0.125	0.0596169302800949\\
0.0625	0.0245552154514822\\
0.03125	0.0117938543859436\\
0.015625	0.00585132936638554\\
0.0078125	0.00292103069926452\\
};
\addlegendentry{$\rho\text{ = h}^{\text{1/2}}$}

\addplot [color=mycolor4, mark=*, mark options={solid, mycolor4}]
  table[row sep=crcr]{%
0.25	0.21606633969126\\
0.125	0.0599418086889611\\
0.0625	0.0246100849081296\\
0.03125	0.0118115843803502\\
0.015625	0.00586596814433315\\
0.0078125	0.0029424904210736\\
};
\addlegendentry{$\rho\text{ = h}$}

\addplot [color=black, line width=1.0pt, forget plot]
  table[row sep=crcr]{%
0.09	0.006\\
0.27	0.006\\
};
\addplot [color=black, line width=1.0pt, forget plot]
  table[row sep=crcr]{%
0.27	0.006\\
0.27	0.018\\
};
\addplot [color=black, line width=1.0pt, forget plot]
  table[row sep=crcr]{%
0.09	0.006\\
0.27	0.018\\
};
\node[centered, align=center, inner sep=0, font=\color{mycolor2}]
at (axis cs:0.156,0.0045) {1};
\node[right, align=left, inner sep=0, font=\color{mycolor2}]
at (axis cs:0.297,0.01) {1};
\end{axis}

\end{tikzpicture}%}
        \caption{Error \(E_3^{(\rho)}\)}
    \end{subfigure}
    \caption{Test 2: Errors w.r.t.~$h$ of CP1FEM with \(\rho = 1, h^{1/2}, h\).}
    \label{Fig:orderofcv w.r.t. h & rho CFEM}
\end{figure}

\begin{figure}[t]
    \centering
    \begin{subfigure}[t]{0.32\textwidth}
        \centering
         \resizebox{1\textwidth}{!}{% This file was created by matlab2tikz.
%
%The latest updates can be retrieved from
%  http://www.mathworks.com/matlabcentral/fileexchange/22022-matlab2tikz-matlab2tikz
%where you can also make suggestions and rate matlab2tikz.
%
\definecolor{mycolor1}{rgb}{0.06600,0.44300,0.74500}%
\definecolor{mycolor2}{rgb}{0.86600,0.32900,0.00000}%
\definecolor{mycolor3}{rgb}{0.92900,0.69400,0.12500}%
\definecolor{mycolor4}{rgb}{0.52100,0.08600,0.81900}%
\definecolor{mycolor5}{rgb}{0.12941,0.12941,0.12941}%
\begin{tikzpicture}

\begin{axis}[%
width=7cm,
height=7cm,
scale only axis,
xmode=log,
xmin=0.0005,
xmax=5,
xminorticks=false,
xlabel style={font=\color{mycolor5}},
xlabel={$\rho$},
ymode=log,
ymin=0.0001,
ymax=500,
yminorticks=false,
axis background/.style={fill=white},
legend style={legend cell align=left, align=left}
]
\addplot [color=mycolor1, mark=asterisk, mark options={solid, mycolor1}]
  table[row sep=crcr]{%
0.001	54.4\\
0.01	1.13\\
0.1	0.104\\
1	0.0033\\
};
\addlegendentry{mesh4}

\addplot [color=mycolor2, mark=square, mark options={solid, mycolor2}]
  table[row sep=crcr]{%
0.001	5.27\\
0.01	0.278\\
0.1	0.0239\\
1	0.00146\\
};
\addlegendentry{mesh5}

\addplot [color=mycolor3, mark=triangle, mark options={solid, mycolor3}]
  table[row sep=crcr]{%
0.001	0.772\\
0.01	0.0684\\
0.1	0.00494\\
1	0.000168\\
};
\addlegendentry{mesh6}

\addplot [color=mycolor4]
  table[row sep=crcr]{%
0.001	100\\
0.01	10\\
0.1	1\\
1	0.1\\
};
\addlegendentry{slope \(-1\)}

\end{axis}

\end{tikzpicture}%}
        \caption{Error \(E_1^{(\rho)}\)}
    \end{subfigure}%
    \hfill
    \begin{subfigure}[t]{0.32\textwidth}
        \centering
         \resizebox{1\textwidth}{!}{% This file was created by matlab2tikz.
%
%The latest updates can be retrieved from
%  http://www.mathworks.com/matlabcentral/fileexchange/22022-matlab2tikz-matlab2tikz
%where you can also make suggestions and rate matlab2tikz.
%
\definecolor{mycolor1}{rgb}{0.06600,0.44300,0.74500}%
\definecolor{mycolor2}{rgb}{0.86600,0.32900,0.00000}%
\definecolor{mycolor3}{rgb}{0.92900,0.69400,0.12500}%
\definecolor{mycolor4}{rgb}{0.52100,0.08600,0.81900}%
\definecolor{mycolor5}{rgb}{0.12941,0.12941,0.12941}%
\begin{tikzpicture}

\begin{axis}[%
width=7cm,
height=7cm,
scale only axis,
xmode=log,
xmin=0.0005,
xmax=5,
xminorticks=false,
xlabel style={font=\color{mycolor5}},
xlabel={$\rho$},
ymode=log,
ymin=0.0005,
ymax=500,
yminorticks=false,
axis background/.style={fill=white},
legend style={legend cell align=left, align=left}
]
\addplot [color=mycolor1, mark=asterisk, mark options={solid, mycolor1}]
  table[row sep=crcr]{%
0.001	0.0539\\
0.01	0.0113\\
0.1	0.0112\\
1	0.0112\\
};
\addlegendentry{mesh4}

\addplot [color=mycolor2, mark=square, mark options={solid, mycolor2}]
  table[row sep=crcr]{%
0.001	0.0103\\
0.01	0.00555\\
0.1	0.00552\\
1	0.00552\\
};
\addlegendentry{mesh5}

\addplot [color=mycolor3, mark=triangle, mark options={solid, mycolor3}]
  table[row sep=crcr]{%
0.001	0.00386\\
0.01	0.00276\\
0.1	0.00275\\
1	0.00275\\
};
\addlegendentry{mesh6}

\end{axis}

\end{tikzpicture}%}
        \caption{Error \(E_2^{(\rho)}\)}
    \end{subfigure}
    \hfill
    \begin{subfigure}[t]{0.32\textwidth}
        \centering
         \resizebox{1\textwidth}{!}{% This file was created by matlab2tikz.
%
%The latest updates can be retrieved from
%  http://www.mathworks.com/matlabcentral/fileexchange/22022-matlab2tikz-matlab2tikz
%where you can also make suggestions and rate matlab2tikz.
%
\definecolor{mycolor1}{rgb}{0.06600,0.44300,0.74500}%
\definecolor{mycolor2}{rgb}{0.86600,0.32900,0.00000}%
\definecolor{mycolor3}{rgb}{0.92900,0.69400,0.12500}%
\definecolor{mycolor4}{rgb}{0.52100,0.08600,0.81900}%
\definecolor{mycolor5}{rgb}{0.12941,0.12941,0.12941}%
\begin{tikzpicture}

\begin{axis}[%
width=7cm,
height=7cm,
scale only axis,
xmode=log,
xmin=0.0005,
xmax=5,
xminorticks=false,
xlabel style={font=\color{mycolor5}},
xlabel={$\rho$},
ymode=log,
ymin=0.0005,
ymax=500,
yminorticks=false,
axis background/.style={fill=white},
legend style={legend cell align=left, align=left}
]
\addplot [color=mycolor1, mark=asterisk, mark options={solid, mycolor1}]
  table[row sep=crcr]{%
0.001	0.0716\\
0.01	0.0119\\
0.1	0.0118\\
1	0.0118\\
};
\addlegendentry{mesh4}

\addplot [color=mycolor2, mark=square, mark options={solid, mycolor2}]
  table[row sep=crcr]{%
0.001	0.0124\\
0.01	0.00589\\
0.1	0.00585\\
1	0.00585\\
};
\addlegendentry{mesh5}

\addplot [color=mycolor3, mark=triangle, mark options={solid, mycolor3}]
  table[row sep=crcr]{%
0.001	0.00443\\
0.01	0.00293\\
0.1	0.00292\\
1	0.00292\\
};
\addlegendentry{mesh6}

\end{axis}

\end{tikzpicture}%}
        \caption{Error \(E_3^{(\rho)}\)}
    \end{subfigure}
\caption{Test 2: Errors w.r.t.~\(\rho\) of CP1FEM on mesh4, mesh5, and mesh6.}
\label{Fig:orderofcv w.r.t. rho CFEM 1}
\end{figure}

\begin{figure}[t]
    \centering
    \begin{subfigure}[t]{0.32\textwidth}
        \centering
         \resizebox{1\textwidth}{!}{% This file was created by matlab2tikz.
%
%The latest updates can be retrieved from
%  http://www.mathworks.com/matlabcentral/fileexchange/22022-matlab2tikz-matlab2tikz
%where you can also make suggestions and rate matlab2tikz.
%
\definecolor{mycolor1}{rgb}{0.06600,0.44300,0.74500}%
\definecolor{mycolor2}{rgb}{0.12941,0.12941,0.12941}%
\definecolor{mycolor3}{rgb}{0.23100,0.66600,0.19600}%
\definecolor{mycolor4}{rgb}{0.86600,0.32900,0.00000}%
\begin{tikzpicture}

\begin{axis}[%
width=7cm,
height=7cm,
scale only axis,
separate axis lines,
every outer x axis line/.append style={mycolor2},
every x tick label/.append style={font=\color{mycolor2}},
every x tick/.append style={mycolor2},
xmode=log,
xmin=0.0013,
xmax=1.36,
xminorticks=false,
xlabel style={font=\color{mycolor2}},
xlabel={$h$},
every outer y axis line/.append style={mycolor2},
every y tick label/.append style={font=\color{mycolor2}},
every y tick/.append style={mycolor2},
ymode=log,
ymin=0.0001,
ymax=7.43675595834041,
yminorticks=false,
axis background/.style={fill=white},
legend style={at={(0.03,0.97)}, anchor=north west, legend cell align=left, align=left, draw=mycolor2, fill=white}
]
\addplot [color=mycolor1, mark=asterisk, mark options={solid, mycolor1}]
  table[row sep=crcr]{%
0.25	0.621238308546587\\
0.125	0.144943247720334\\
0.0625	0.028561831496851\\
0.03125	0.00324607426353482\\
0.015625	0.00144776085624211\\
0.0078125	0.000361\\
};
\addlegendentry{$\rho\text{ = 1}$}

\addplot [color=black, line width=1.0pt, forget plot]
  table[row sep=crcr]{%
0.09	0.006\\
0.27	0.006\\
};
\addplot [color=black, line width=1.0pt, forget plot]
  table[row sep=crcr]{%
0.27	0.006\\
0.27	0.018\\
};
\addplot [color=black, line width=1.0pt, forget plot]
  table[row sep=crcr]{%
0.09	0.006\\
0.27	0.018\\
};
\node[centered, align=center, inner sep=0, font=\color{mycolor2}]
at (axis cs:0.156,0.0045) {1};
\node[right, align=left, inner sep=0, font=\color{mycolor2}]
at (axis cs:0.297,0.01) {1};
\addplot [color=mycolor3, mark=asterisk, mark options={solid, mycolor3}]
  table[row sep=crcr]{%
0.25	1.29676113978006\\
0.125	0.464276160434116\\
0.0625	0.161325676789106\\
0.03125	0.0549971150314164\\
0.015625	0.0182914419997227\\
0.0078125	0.00586430038733626\\
};
\addlegendentry{$\rho\text{ = h}^{\text{1/2}}$}

\addplot [color=mycolor4, mark=asterisk, mark options={solid, mycolor4}]
  table[row sep=crcr]{%
0.25	2.65776302165729\\
0.125	1.37403215999192\\
0.0625	0.69597974985572\\
0.03125	0.350570086062636\\
0.015625	0.175988842134239\\
0.0078125	0.0881782210412607\\
};
\addlegendentry{$\rho\text{ = h}$}
\end{axis}
\end{tikzpicture}%}
        \caption{Error \(E_1^{(\rho)}\)}
    \end{subfigure}%
    \hfill
    \begin{subfigure}[t]{0.32\textwidth}
        \centering
         \resizebox{1\textwidth}{!}{% This file was created by matlab2tikz.
%
%The latest updates can be retrieved from
%  http://www.mathworks.com/matlabcentral/fileexchange/22022-matlab2tikz-matlab2tikz
%where you can also make suggestions and rate matlab2tikz.
%
\definecolor{mycolor1}{rgb}{0.06600,0.44300,0.74500}%
\definecolor{mycolor2}{rgb}{0.12941,0.12941,0.12941}%
\definecolor{mycolor3}{rgb}{0.23100,0.66600,0.19600}%
\definecolor{mycolor4}{rgb}{0.86600,0.32900,0.00000}%
\begin{tikzpicture}
\begin{axis}[%
width=7cm,
height=7cm,
scale only axis,
separate axis lines,
every outer x axis line/.append style={mycolor2},
every x tick label/.append style={font=\color{mycolor2}},
every x tick/.append style={mycolor2},
xmode=log,
xmin=0.0013,
xmax=1.36,
xminorticks=false,
xlabel style={font=\color{mycolor2}},
xlabel={$h$},
every outer y axis line/.append style={mycolor2},
every y tick label/.append style={font=\color{mycolor2}},
every y tick/.append style={mycolor2},
ymode=log,
ymin=0.000159723101372596,
ymax=2.95811323404695,
yminorticks=false,
axis background/.style={fill=white},
legend style={at={(0.03,0.97)}, anchor=north west, legend cell align=left, align=left, draw=mycolor2, fill=white}
]
\addplot [color=mycolor1, mark=asterisk, mark options={solid, mycolor1}]
  table[row sep=crcr]{%
0.25	0.219750588117238\\
0.125	0.0805802852749884\\
0.0625	0.0358239894249939\\
0.03125	0.0173028835163314\\
0.015625	0.00857431361694942\\
0.0078125	0.00427659803021997\\
};
\addlegendentry{$\rho\text{ = 1}$}
\addplot [color=black, line width=1.0pt, forget plot]
  table[row sep=crcr]{%
0.09	0.006\\
0.27	0.006\\
};
\addplot [color=black, line width=1.0pt, forget plot]
  table[row sep=crcr]{%
0.27	0.006\\
0.27	0.018\\
};
\addplot [color=black, line width=1.0pt, forget plot]
  table[row sep=crcr]{%
0.09	0.006\\
0.27	0.018\\
};
\node[centered, align=center, inner sep=0, font=\color{mycolor2}]
at (axis cs:0.156,0.0045) {1};
\node[right, align=left, inner sep=0, font=\color{mycolor2}]
at (axis cs:0.297,0.01) {1};
\addplot [color=mycolor3, mark=asterisk, mark options={solid, mycolor3}]
  table[row sep=crcr]{%
0.25	0.222895583132694\\
0.125	0.0813046707841766\\
0.0625	0.0359644738585385\\
0.03125	0.0173300658585328\\
0.015625	0.00857960347028611\\
0.0078125	0.00427764541292005\\
};
\addlegendentry{$\rho\text{ = h}^{\text{1/2}}$}
\addplot [color=mycolor4, mark=asterisk, mark options={solid, mycolor4}]
  table[row sep=crcr]{%
0.25	0.224492883122536\\
0.125	0.081589598117694\\
0.0625	0.0360122415615734\\
0.03125	0.0173523206995859\\
0.015625	0.00862272401782041\\
0.0078125	0.00438203603180168\\
};
\addlegendentry{$\rho\text{ = h}$}
\end{axis}
\end{tikzpicture}%}
        \caption{Error \(E_2^{(\rho)}\)}
    \end{subfigure}
     \hfill
    \begin{subfigure}[t]{0.32\textwidth}
        \centering
         \resizebox{1\textwidth}{!}{% This file was created by matlab2tikz.
%
%The latest updates can be retrieved from
%  http://www.mathworks.com/matlabcentral/fileexchange/22022-matlab2tikz-matlab2tikz
%where you can also make suggestions and rate matlab2tikz.
%
\definecolor{mycolor1}{rgb}{0.06600,0.44300,0.74500}%
\definecolor{mycolor2}{rgb}{0.12941,0.12941,0.12941}%
\definecolor{mycolor3}{rgb}{0.23100,0.66600,0.19600}%
\definecolor{mycolor4}{rgb}{0.86600,0.32900,0.00000}%
\begin{tikzpicture}

\begin{axis}[%
width=7cm,
height=7cm,
scale only axis,
separate axis lines,
every outer x axis line/.append style={mycolor2},
every x tick label/.append style={font=\color{mycolor2}},
every x tick/.append style={mycolor2},
xmode=log,
xmin=0.0013,
xmax=1.36,
xminorticks=false,
xlabel style={font=\color{mycolor2}},
xlabel={$h$},
every outer y axis line/.append style={mycolor2},
every y tick label/.append style={font=\color{mycolor2}},
every y tick/.append style={mycolor2},
ymode=log,
ymin=0.000139054922982024,
ymax=2.57533321352773,
yminorticks=false,
axis background/.style={fill=white},
legend style={at={(0.03,0.97)}, anchor=north west, legend cell align=left, align=left, draw=mycolor2, fill=white}
]
\addplot [color=mycolor1, mark=asterisk, mark options={solid, mycolor1}]
  table[row sep=crcr]{%
0.25	0.228195972456008\\
0.125	0.0808641930196969\\
0.0625	0.0356088936153063\\
0.03125	0.0171067164470741\\
0.015625	0.00845741348182169\\
0.0078125	0.0042152169940852\\
};
\addlegendentry{$\rho\text{ = 1}$}

\addplot [color=black, line width=1.0pt, forget plot]
  table[row sep=crcr]{%
0.09	0.006\\
0.27	0.006\\
};
\addplot [color=black, line width=1.0pt, forget plot]
  table[row sep=crcr]{%
0.27	0.006\\
0.27	0.018\\
};
\addplot [color=black, line width=1.0pt, forget plot]
  table[row sep=crcr]{%
0.09	0.006\\
0.27	0.018\\
};
\node[centered, align=center, inner sep=0, font=\color{mycolor2}]
at (axis cs:0.156,0.0045) {1};
\node[right, align=left, inner sep=0, font=\color{mycolor2}]
at (axis cs:0.297,0.01) {1};
\addplot [color=mycolor3, mark=asterisk, mark options={solid, mycolor3}]
  table[row sep=crcr]{%
0.25	0.230662182379782\\
0.125	0.0814751574703414\\
0.0625	0.0357180900847803\\
0.03125	0.017124928491235\\
0.015625	0.00846019280753135\\
0.0078125	0.0042154895989794\\
};
\addlegendentry{$\rho\text{ = h}^{\text{1/2}}$}
 
\addplot [color=mycolor4, mark=asterisk, mark options={solid, mycolor4}]
  table[row sep=crcr]{%
0.25	0.232016967288908\\
0.125	0.0817322630277832\\
0.0625	0.0357536978263536\\
0.03125	0.0171356060469992\\
0.015625	0.0084853817240661\\
0.0078125	0.00429890714275244\\
};
\addlegendentry{$\rho\text{ = h}$}
 
\end{axis}
\end{tikzpicture}%}
        \caption{Error \(E_3^{(\rho)}\)}
    \end{subfigure}
    \caption{Test 2: Errors w.r.t.~$h$ of NCP1FEM with  \(\rho = 1, h^{1/2}, h\).}
    \label{Fig:orderofcv w.r.t. h & rho NCFEM}
\end{figure}

\begin{figure}[t]
    \centering
    \begin{subfigure}[t]{0.32\textwidth}
        \centering
         \resizebox{1\textwidth}{!}{% This file was created by matlab2tikz.
%
%The latest updates can be retrieved from
%  http://www.mathworks.com/matlabcentral/fileexchange/22022-matlab2tikz-matlab2tikz
%where you can also make suggestions and rate matlab2tikz.
%
\definecolor{mycolor1}{rgb}{0.06600,0.44300,0.74500}%
\definecolor{mycolor2}{rgb}{0.86600,0.32900,0.00000}%
\definecolor{mycolor3}{rgb}{0.92900,0.69400,0.12500}%
\definecolor{mycolor4}{rgb}{0.52100,0.08600,0.81900}%
\definecolor{mycolor5}{rgb}{0.12941,0.12941,0.12941}%
\begin{tikzpicture}

\begin{axis}[%
width=7cm,
height=7cm,
scale only axis,
xmode=log,
xmin=0.0005,
xmax=5,
xminorticks=false,
xlabel style={font=\color{mycolor5}},
xlabel={$\rho$},
ymode=log,
ymin=0.0001,
ymax=500,
yminorticks=false,
axis background/.style={fill=white},
legend style={legend cell align=left, align=left}
]
\addplot [color=mycolor1, mark=asterisk, mark options={solid, mycolor1}]
  table[row sep=crcr]{%
0.001	54.3\\
0.01	1.12\\
0.1	0.104\\
1	0.00325\\
};
\addlegendentry{mesh4}

\addplot [color=mycolor2, mark=square, mark options={solid, mycolor2}]
  table[row sep=crcr]{%
0.001	5.27\\
0.01	0.278\\
0.1	0.0239\\
1	0.00145\\
};
\addlegendentry{mesh5}

\addplot [color=mycolor3, mark=triangle, mark options={solid, mycolor3}]
  table[row sep=crcr]{%
0.001	0.772\\
0.01	0.0684\\
0.1	0.00494\\
1	0.000361\\
};
\addlegendentry{mesh6}

\addplot [color=mycolor4]
  table[row sep=crcr]{%
0.001	100\\
0.01	10\\
0.1	1\\
1	0.1\\
};
\addlegendentry{slope \(-1\)}

\end{axis}

\end{tikzpicture}%}
        \caption{Error \(E_1^{(\rho)}\)}
    \end{subfigure}%
    \hfill
    \begin{subfigure}[t]{0.32\textwidth}
        \centering
         \resizebox{1\textwidth}{!}{% This file was created by matlab2tikz.
%
%The latest updates can be retrieved from
%  http://www.mathworks.com/matlabcentral/fileexchange/22022-matlab2tikz-matlab2tikz
%where you can also make suggestions and rate matlab2tikz.
%
\definecolor{mycolor1}{rgb}{0.06600,0.44300,0.74500}%
\definecolor{mycolor2}{rgb}{0.86600,0.32900,0.00000}%
\definecolor{mycolor3}{rgb}{0.92900,0.69400,0.12500}%
\definecolor{mycolor4}{rgb}{0.52100,0.08600,0.81900}%
\definecolor{mycolor5}{rgb}{0.12941,0.12941,0.12941}%
\begin{tikzpicture}

\begin{axis}[%
width=7cm,
height=7cm,
scale only axis,
xmode=log,
xmin=0.0005,
xmax=5,
xminorticks=false,
xlabel style={font=\color{mycolor5}},
xlabel={$\rho$},
ymode=log,
ymin=0.0005,
ymax=500,
yminorticks=false,
axis background/.style={fill=white},
legend style={legend cell align=left, align=left}
]
\addplot [color=mycolor1, mark=asterisk, mark options={solid, mycolor1}]
  table[row sep=crcr]{%
0.001	0.0853\\
0.01	0.0176\\
0.1	0.0173\\
1	0.0173\\
};
\addlegendentry{mesh4}

\addplot [color=mycolor2, mark=square, mark options={solid, mycolor2}]
  table[row sep=crcr]{%
0.001	0.0388\\
0.01	0.00872\\
0.1	0.00858\\
1	0.00857\\
};
\addlegendentry{mesh5}

\addplot [color=mycolor3, mark=triangle, mark options={solid, mycolor3}]
  table[row sep=crcr]{%
0.001	0.0194\\
0.01	0.00433\\
0.1	0.00428\\
1	0.00428\\
};
\addlegendentry{mesh6}

\end{axis}

\end{tikzpicture}%}
        \caption{Error \(E_2^{(\rho)}\)}
    \end{subfigure}
    \hfill
    \begin{subfigure}[t]{0.32\textwidth}
        \centering
         \resizebox{1\textwidth}{!}{% This file was created by matlab2tikz.
%
%The latest updates can be retrieved from
%  http://www.mathworks.com/matlabcentral/fileexchange/22022-matlab2tikz-matlab2tikz
%where you can also make suggestions and rate matlab2tikz.
%
\definecolor{mycolor1}{rgb}{0.06600,0.44300,0.74500}%
\definecolor{mycolor2}{rgb}{0.86600,0.32900,0.00000}%
\definecolor{mycolor3}{rgb}{0.92900,0.69400,0.12500}%
\definecolor{mycolor4}{rgb}{0.52100,0.08600,0.81900}%
\definecolor{mycolor5}{rgb}{0.12941,0.12941,0.12941}%
\begin{tikzpicture}

\begin{axis}[%
width=7cm,
height=7cm,
scale only axis,
xmode=log,
xmin=0.0005,
xmax=5,
xminorticks=false,
xlabel style={font=\color{mycolor5}},
xlabel={$\rho$},
ymode=log,
ymin=0.0005,
ymax=500,
yminorticks=false,
axis background/.style={fill=white},
legend style={legend cell align=left, align=left}
]
\addplot [color=mycolor1, mark=asterisk, mark options={solid, mycolor1}]
  table[row sep=crcr]{%
0.001	0.115\\
0.01	0.0173\\
0.1	0.0171\\
1	0.0171\\
};
\addlegendentry{mesh4}

\addplot [color=mycolor2, mark=square, mark options={solid, mycolor2}]
  table[row sep=crcr]{%
0.001	0.0562\\
0.01	0.00856\\
0.1	0.00846\\
1	0.00846\\
};
\addlegendentry{mesh5}

\addplot [color=mycolor3, mark=triangle, mark options={solid, mycolor3}]
  table[row sep=crcr]{%
0.001	0.0282\\
0.01	0.00426\\
0.1	0.00422\\
1	0.00422\\
};
\addlegendentry{mesh6}

\end{axis}

\end{tikzpicture}%}
        \caption{Error \(E_3^{(\rho)}\)}
    \end{subfigure}
\caption{Test 2: Errors w.r.t.~\(\rho\) of NCP1FEM on mesh4, mesh5, and mesh6.}
\label{Fig:orderofcv w.r.t. rho NCFEM 1}
\end{figure}

\subsection*{Test 3}
The aim of this test is to observe the evolution of the numerical solution of the schemes for  
\begin{equation}\label{eq:MSF}
    \partial_t u = \text{div} \, \bigg( \frac{\nabla u}{\sqrt{1 + |\nabla u|^2}} \bigg)
\end{equation}
which is imposed with a non-smooth initial condition
\[u_0 = S(x,y) + x(1-x)y(1-y) + 0.25\mathbbm{1}_{B}, \quad (x,y) \in \Omega.\]
where 
\begin{equation*}
    S(x,y) = 0.25 + \ln\bigg( \frac{\cos(y - 0.5)}{\cos(x - 0.5)}\bigg)
    \text{ and }
    B:= \{(x,y): |x-0.5|^2 + |y - 0.5|^2 \leq 0.16\};
\end{equation*}
and boundary conditions \(u|_{\partial \Omega} = S|_{\partial \Omega}.\)
% ; that is, \(S\) is a minimal surface (see, e.g. \cite[Chapter 2]{dierkes2010minimal}).
Since \(\Omega\) is convex and the surface boundary \(S|_{\partial \Omega}\) admits a bijective projection with \(\partial \Omega\), 
there exists at most one minimal surface with the boundary \(S|_{\partial \Omega}\).
Moreover, the surface \(S\) has zero mean curvature everywhere on \(\Omega\) (see Lemma \ref{lem:S zero mean curvature}), so \(S\) is the unique minimal surface with the boundary \(S|_{\partial \Omega}\), see, e.g. \cite[Chapter 4.9]{dierkes2010minimal}.
Since the solution of \eqref{eq:MSF} evolves in the direction that decreases the surface area till reaching zero mean curvature, the expected minimal surface from \eqref{eq:MSF} is \(S\).

The parameters for this simulation are: the final time \(T =1\), the time step \(\delta t = 10^{-3}\) and the tolerance \(\textnormal{tol} = 10^{-12}\).
Figures \ref{fig:CFEM snapshots of numerical solution at time t} and \ref{fig:NCFEM snapshots of numerical solution at time t} demonstrate that both schemes of CP1FEM and NCP1FEM effectively minimize the surface area over time.
Figure \ref{fig:comparison numerical sol and exp} confirms that our numerical solutions align closely with the nodal interpolation of \(S\), validating the accuracy of our schemes.

\begin{figure}[t]
    \centering
    \begin{subfigure}[t]{0.32\textwidth}
        \centering
         \includegraphics[width=1\textwidth]{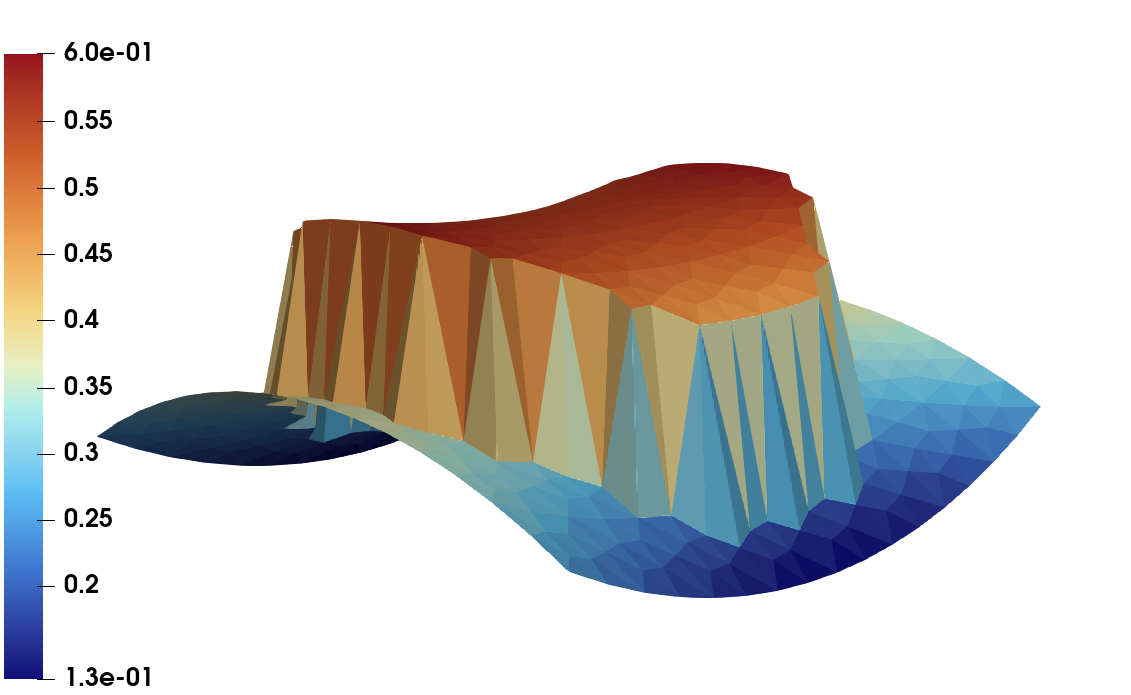}
        \caption{t = 0}
    \end{subfigure}%
    \hfill
    \begin{subfigure}[t]{0.32\textwidth}
        \centering
         \includegraphics[width=1\textwidth]{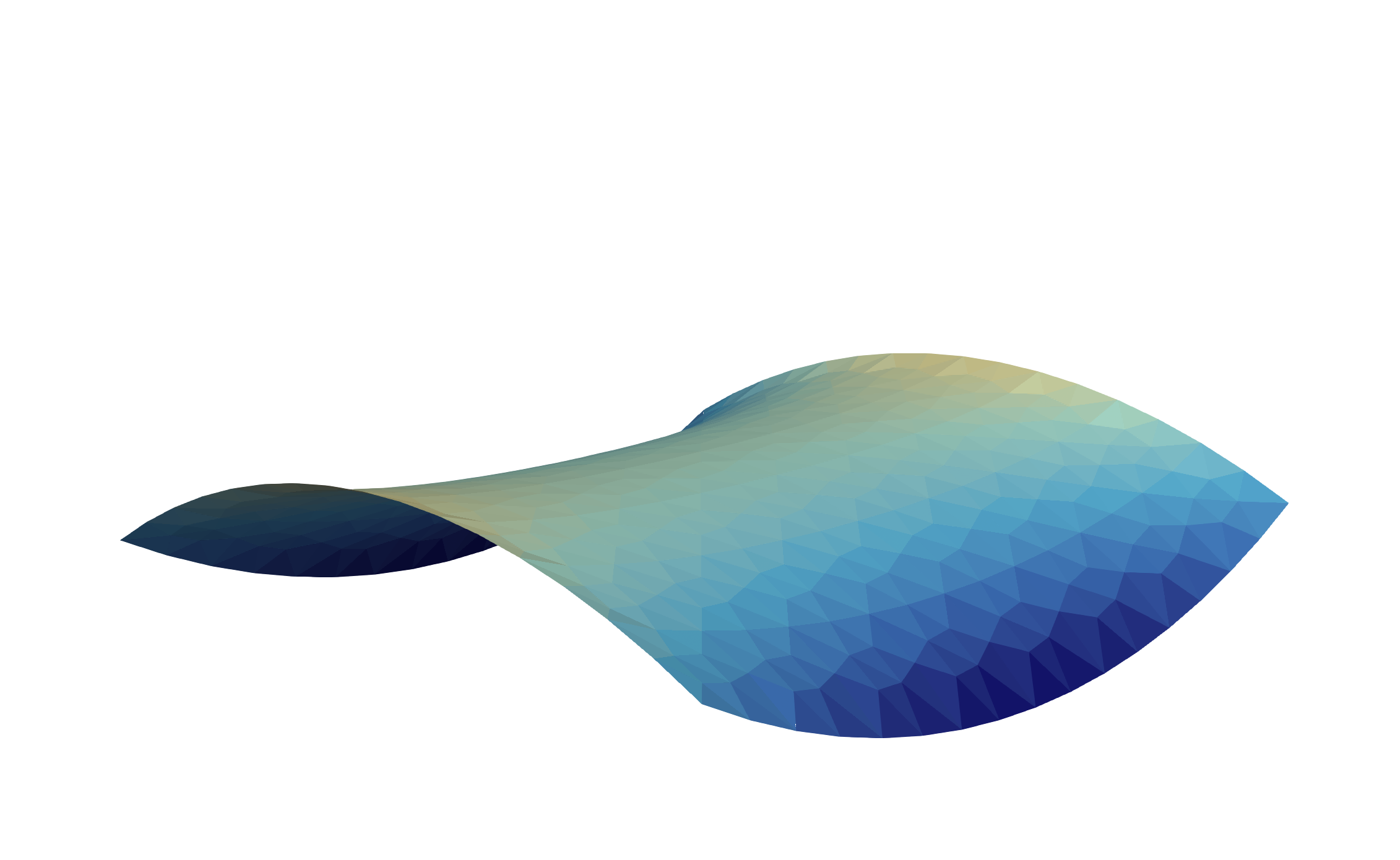}
        \caption{t = 0.1}
    \end{subfigure}
    \hfill
    \begin{subfigure}[t]{0.32\textwidth}
        \centering
         \includegraphics[width=1\textwidth]{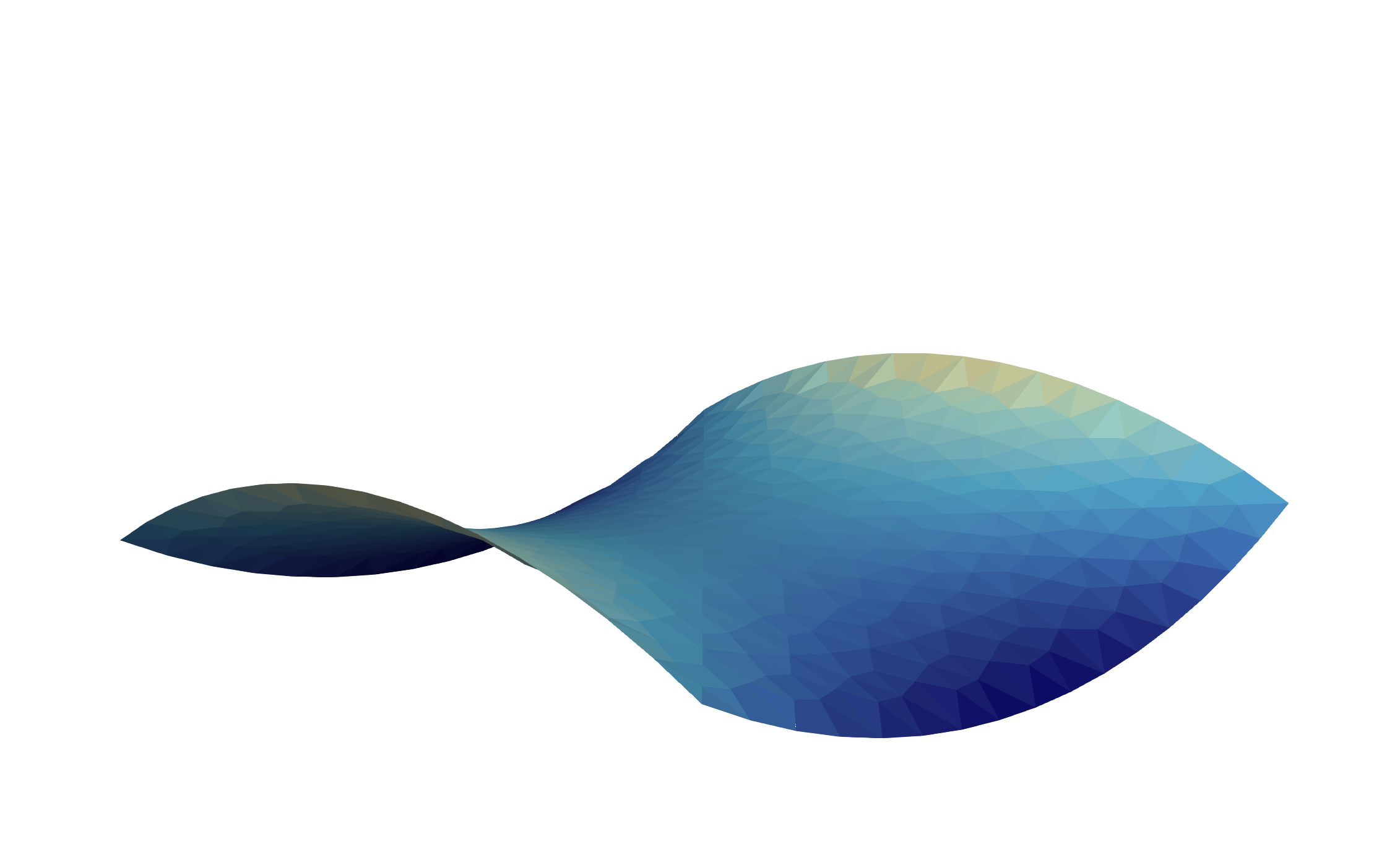}
        \caption{t = 1}
    \end{subfigure}
\caption{Test 3: CP1FEM, snapshots of numerical solution at the time \(t\).}
\label{fig:CFEM snapshots of numerical solution at time t}
\end{figure}

\begin{figure}[t]
    \centering
    \begin{subfigure}[t]{0.32\textwidth}
        \centering
         \includegraphics[width=1\textwidth]{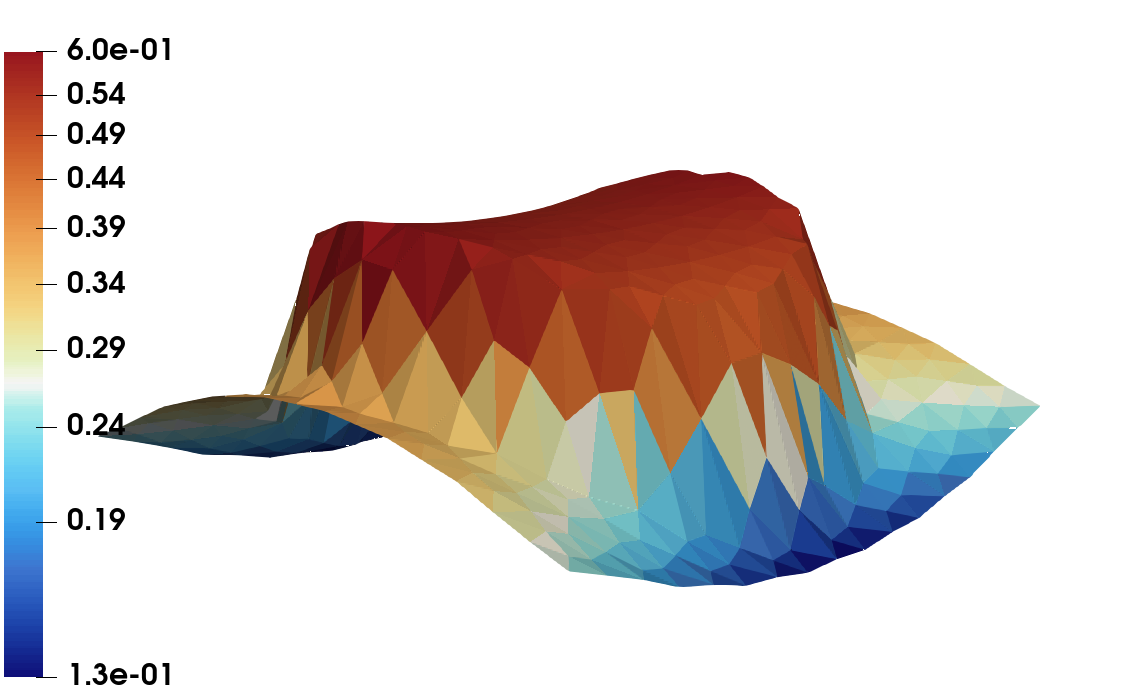}
        \caption{t = 0}
    \end{subfigure}%
    \hfill
    \begin{subfigure}[t]{0.32\textwidth}
        \centering
         \includegraphics[width=1\textwidth]{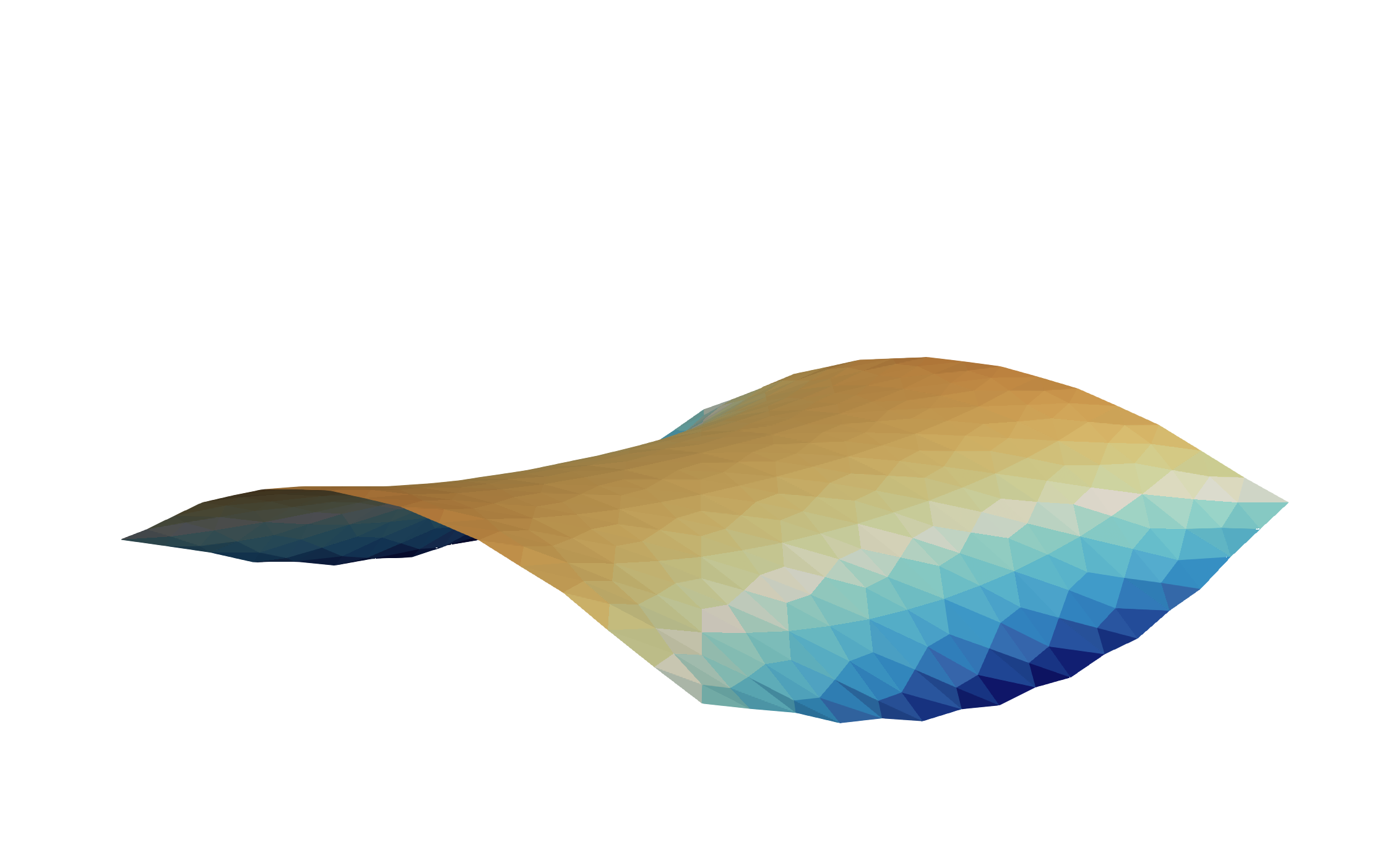}
        \caption{t = 0.1}
    \end{subfigure}
    \hfill
    \begin{subfigure}[t]{0.32\textwidth}
        \centering
         \includegraphics[width=1\textwidth]{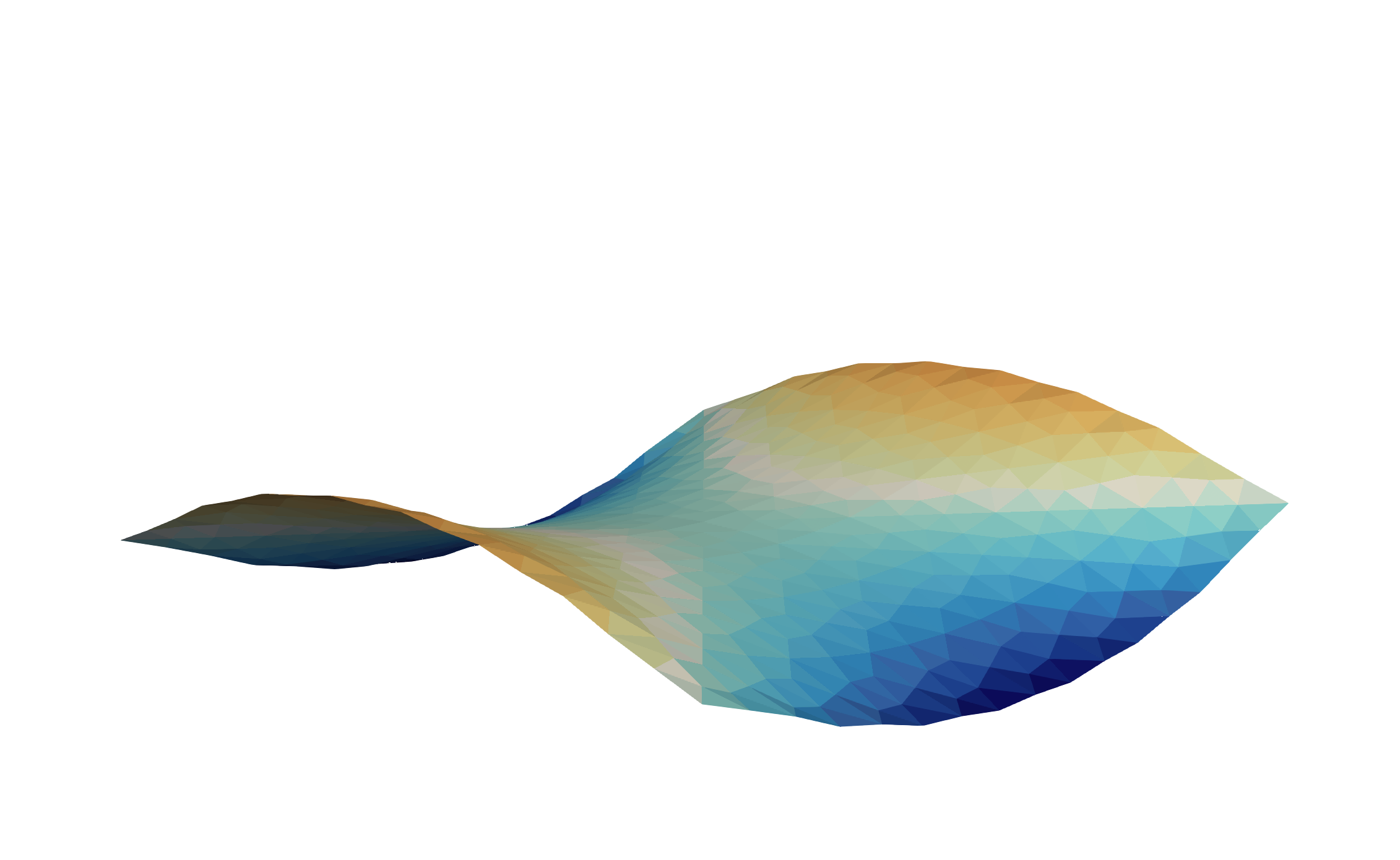}
        \caption{t = 1}
    \end{subfigure}
\caption{Test 3: NCP1FEM, snapshots of numerical solution at the time \(t\).}
\label{fig:NCFEM snapshots of numerical solution at time t}
\end{figure}

\begin{figure}[t]
    \centering
    \begin{subfigure}[t]{0.49\textwidth}
        \centering
         \includegraphics[width=1\textwidth]{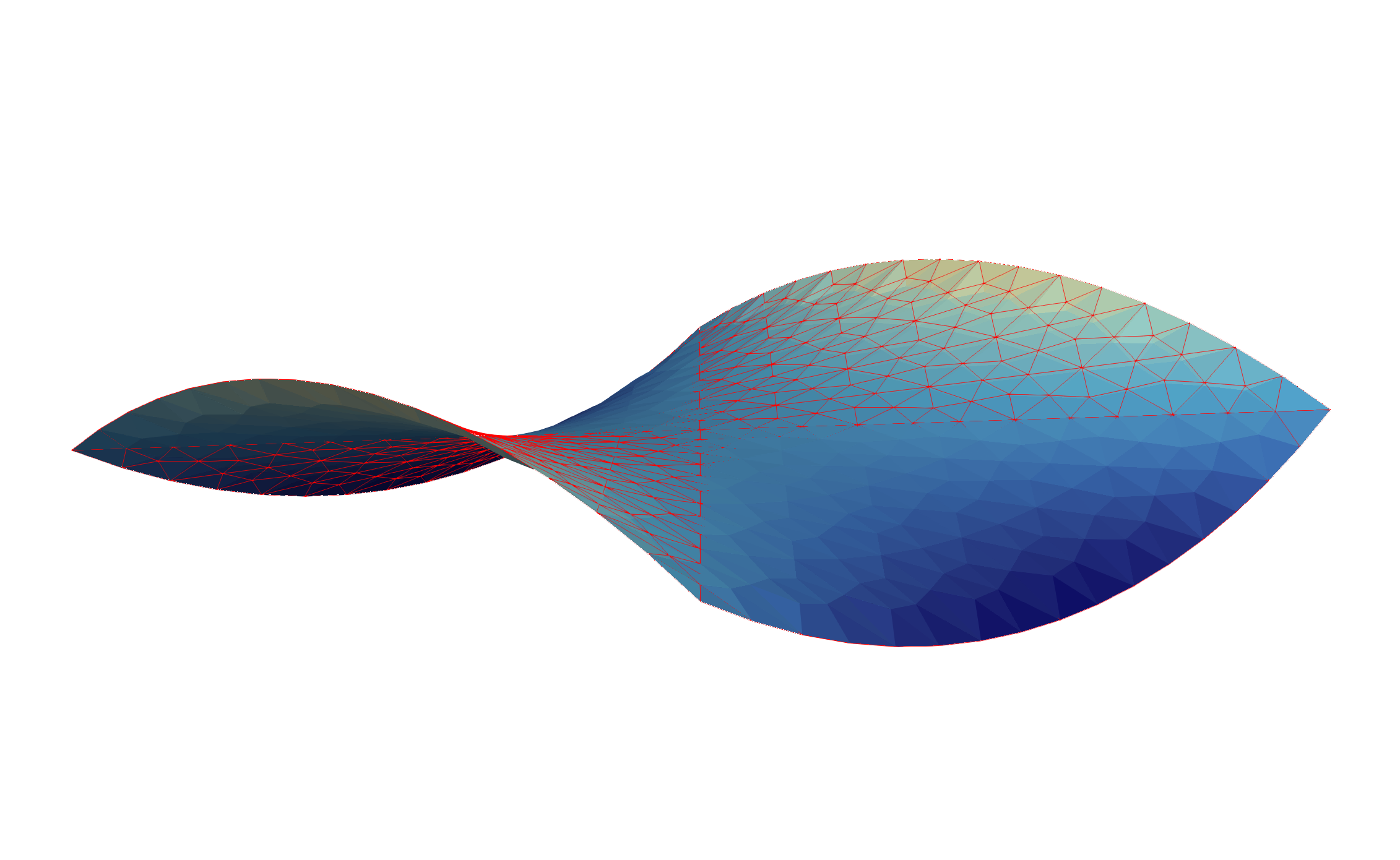}
        \caption{CP1FEM}
    \end{subfigure}%
    \hfill
    \begin{subfigure}[t]{0.49\textwidth}
        \centering
         \includegraphics[width=1\textwidth]{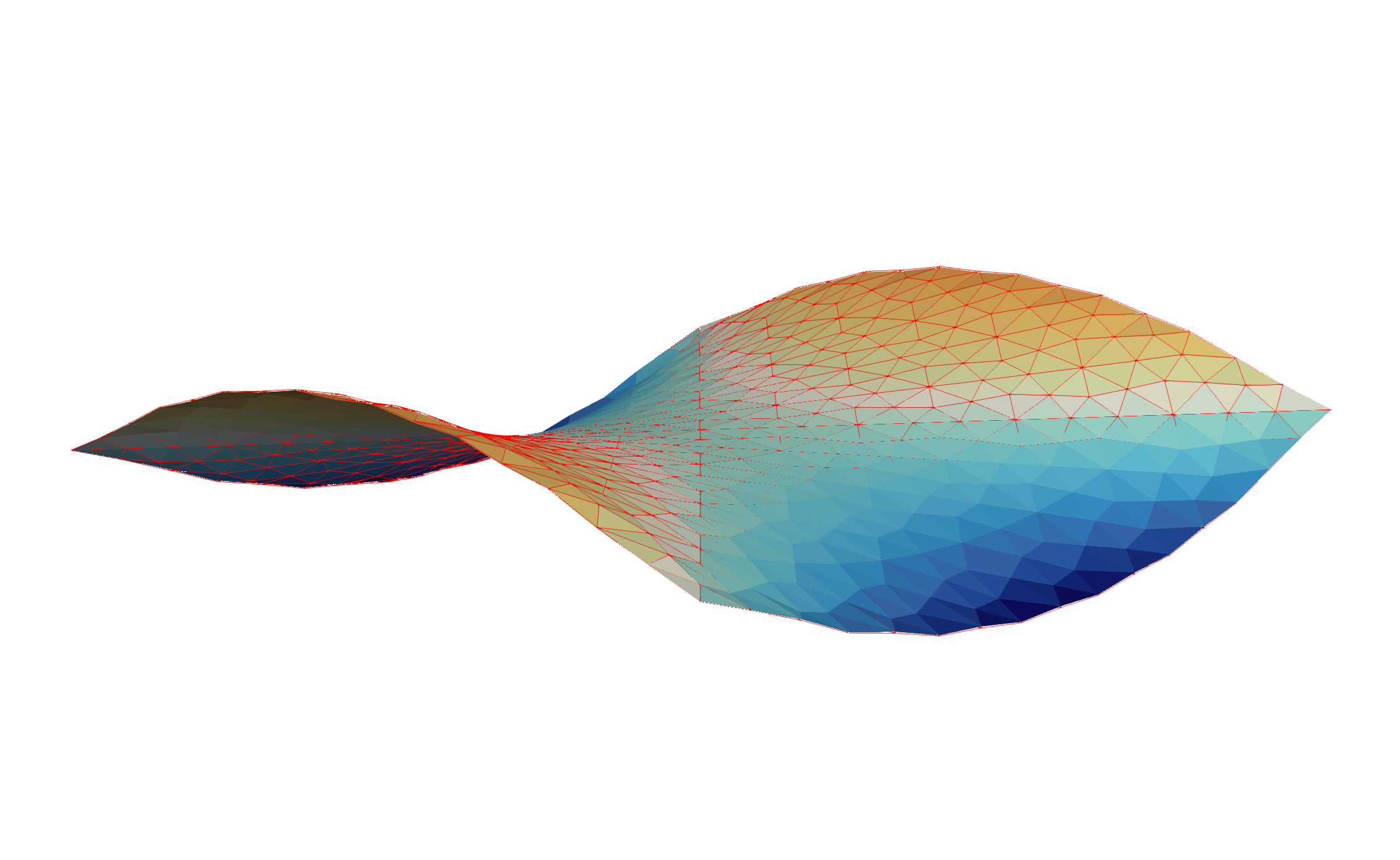}
        \caption{NCP1FEM}
    \end{subfigure}
\caption{Test 3: comparison of numerical solution at the time \(T\) and the surface \(S\) (in red wireframe).}
\label{fig:comparison numerical sol and exp}
\end{figure}

\color{black}

    \section{Conclusion}
    We established error estimates for the gradient scheme, in \(\mathbb{R}^2\), 
    which holds true for all the conforming and nonconforming approximations that are in the framework of the gradient discretisation method.
    Moreover, the minimal surface-like interpolation and its relevant error estimates we proved in this work, have the potential to apply to nonconforming numerical analysis of the geometric flow; for instance, the minimal surface flow and mean curvature flow.
    To the best of our knowledge, these estimates are the first for nonconforming approximations of the model.
    Numerical experiments have been conducted that validate our theoretical results.

    \printbibliography
    \begin{appendices}
    \section{Auxiliary results}
    Let \(F_{\rho}:\mathbb{R}^2 \rightarrow \mathbb{R}\) be defined via 
         \(F_{\rho}(\cdot) =  {\cdot}/{|\cdot|_{\rho}} \)
    with \(|\cdot|_{\rho}\) defined in \eqref{eq:def weighted rho norm}.

    \begin{proposition}%\label{lem:aux 0}
        For any \(u, v  \in \mathbb{R}^2\), 
        \begin{align}
            \frac{1}{|u|_{\rho}} & \leq \frac{1}{\rho},\label{eq-lem:aux 0 0}\\
            |u|_{\rho} - |v|_{\rho}   &\leq |u - v|,\label{eq-lem:aux 0 1}\\
              F_{\rho}(u) - F_{\rho}(v) &\leq \frac{1}{\rho}|u - v|,\label{eq-lem:aux 0 2}\\
              \big(F_{\rho}(u) - F_{\rho}(v) \big)\cdot (u - v) & \geq \bigg(1 - \frac{|v|}{ |v|_{\rho}}\bigg) \frac{|u- v|^2}{|u|_{\rho}}.\label{eq-lem:aux 0 3}
        \end{align}
    \end{proposition}
    \begin{proof}
        Take \(u, v \in \mathbb{R}^2\).

        \noindent \textbf{Proof of \eqref{eq-lem:aux 0 1}.}
        Consider 
        \begin{equation*}
            \begin{aligned}
                 |u|_{\rho} - |v|_{\rho}
                =  
                    \frac{ |u|^2  - |v|^2  }{|u|_{\rho} + |v|_{\rho}} 
                = 
                    \frac{u+v}{|u|_{\rho} + |v|_{\rho}} \cdot (u-v)
                \leq 
                    \frac{| u| + | v | }{|u|_{\rho} + |v|_{\rho}} |u - v| 
                \leq 
                    |u-v|.
            \end{aligned}
        \end{equation*}

        \noindent \textbf{Proof of \eqref{eq-lem:aux 0 2}.}
        Consider
        \begin{equation*}
            \begin{aligned}
                F_{\rho}(u) - F_{\rho}(v) 
                = 
                    \frac{|u|-|v|}{|u|_{\rho}} + |v| \bigg(\frac{1}{|u|_{\rho}} - \frac{1}{|v|_{\rho}}\bigg),
            \end{aligned}
        \end{equation*}
        where 
        \begin{equation*}
            \frac{|u|-|v|}{|u|_{\rho}} = \frac{|u|^2 - |v|^2}{|u|_{\rho}(|u| + |v|)} = \frac{u + v}{|u|_{\rho}(|u| + |v|)} \cdot (u - v) \leq \frac{|u| + |v|}{|u|_{\rho}(|u| + |v|)} \cdot |u - v| \leq \frac{|u - v|}{\rho}
        \end{equation*}
        and 
        \begin{equation*}
            |v| \bigg(\frac{1}{|u|_{\rho}} - \frac{1}{|v|_{\rho}}\bigg) = |v| \frac{|v|^2 - |u|^2}{|u|_{\rho}|v|_{\rho}(|v|_{\rho} + |u|_{\rho})} \leq \frac{|v|}{|v|_{\rho}} \frac{v+u}{|u|_{\rho} (|v|_{\rho} + |u|_{\rho})}\cdot(v - u) \leq \frac{|v - u|}{\rho}. 
        \end{equation*}

        \noindent \textbf{Proof of \eqref{eq-lem:aux 0 3}.}
         We start from the left-hand side
        \begin{align*}
             \big(F_{\rho}(u)  - F_{\rho}(v) \big) \cdot (u - v) 
            = \,&  \frac{(u - v)^2}{ |u|_{\rho}} + \bigg(\frac{1}{|u|_{\rho}} - \frac{1}{ |v|_{\rho}}\bigg) v \cdot (u - v) \\
            = \,& \frac{(u - v)^2}{|u|_{\rho}} + \frac{v^2 - u^2 }{ |u|_{\rho}  |v|_{\rho} (  |u|_{\rho} + |v|_{\rho})}v \cdot (u - v) \\
            = \,& \frac{(u - v)^2}{ |u|_{\rho}} -  \frac{(u - v)^2}{ |u|_{\rho}}  \frac{u + v }{ |u|_{\rho} +  |v|_{\rho}} \cdot \frac{v }{  |v|_{\rho}} \\
            = \,& \bigg(1 - \frac{u + v }{ |u|_{\rho} +  |v|_{\rho}} \cdot \frac{v }{  |v|_{\rho} } \bigg)\frac{(u - v)^2}{ |u|_{\rho}} \\
            \geq \, & \bigg(1 -  \frac{ |v| }{ \sqrt{\rho^2 + |v|^2} } \bigg)\frac{(u - v)^2}{ |u|_{\rho}}. \qedhere
        \end{align*} 
    \end{proof}
    \begin{proposition}
        For any \(u, v \in \mathbb{R}^2\), 
        gradient of \(F_{\rho}(u)\) and directional derivative of \(F_{\rho}(u)\) in the direction of \(v\) are
        \begin{align*}
            &\nabla F_{\rho}(u) 
            % = (\partial_1, \partial_2) \cdot \big(F_{\rho}(u_1), F_{\rho}(u_2)\big)^{\intercal} 
            = \frac{1}{|u|_{\rho}^3}\begin{bmatrix}
               \rho^2 + u_2^2 & -u_2u_1 \\
                -u_1u_2  & \rho^2 + u_1^2
            \end{bmatrix}
            \quad \text{with} \quad u = (u_1, u_2),\\
            & DF_{\rho}(u)[v] = \nabla F_{\rho}(u) \cdot v 
                =  \frac{v}{|u|_{\rho}} - \frac{(u\cdot v)u}{|u|_{\rho}^3}.
        \end{align*}
        Then for any \(u,v,w\in \mathbb{R}^2\),
        \begin{align}
             &\frac{\rho^2|v|^2 }{|u|_{\rho}^3} \leq DF_{\rho}(u)[v] \cdot v  \leq \frac{2|v|^2}{|u|_{\rho}}, \label{eq-lem:aux 1}\\
            &DF_{\rho}(u)[v] \cdot w  \leq \frac{2|v||w|}{\rho} \label{eq-lem:aux 1.1},\\
            & \big| DF_{\rho}(u)[v] - DF_{\rho}(w)[v] \big|  \leq \bigg( \frac{1}{\rho^2 } + \frac{|u|}{\rho^3}\bigg) |w - u||v|.\label{eq-lem:aux 2}
        \end{align}
    \end{proposition}
    \begin{proof}
        We omit the proof for \eqref{eq-lem:aux 1} and \eqref{eq-lem:aux 1.1} since these follow from basic algebraic manipulation.
        Now we proof for \eqref{eq-lem:aux 2}.
        Take \(u, v, w \in \mathbb{R}^2\), 
        \begin{equation*}
            \begin{aligned}
                \big| DF_{\rho}(u)[v] - DF_{\rho}(w)[v] \big| = \bigg(\frac{1}{|u|_{\rho}} - \frac{1}{|w|_{\rho}}\bigg)v - \bigg(\frac{(u\cdot v)u}{|u|_{\rho}^3} - \frac{(w\cdot v)w}{|w|_{\rho}^3}\bigg) =: T_1 - T_2.
            \end{aligned}
        \end{equation*}
        Consider \(T_1\) first, 
        \begin{equation*}
            \begin{aligned}
                T_1 = \frac{\big(|w|^2 - |u|^2\big)v}{|u|_{\rho} |w|_{\rho} \big(|w|_{\rho} + |u|_{\rho}\big)} \leq \frac{1}{\rho^2}\bigg| \frac{\rho}{|u|_{\rho}}\bigg|\bigg| \frac{\rho}{|w|_{\rho}}\bigg|\bigg| \frac{w + u}{|w|_{\rho} + |u|_{\rho}}\bigg| |w-u||v| \leq \frac{ |w-u||v|}{\rho^2}.
            \end{aligned}
        \end{equation*}
        Now consider \(T_2\), 
        \begin{equation*}
            \begin{aligned}
                  T_2 
                =  (u\cdot v)u \bigg(\frac{1}{|u|_{\rho}^3} - \frac{1}{|w|_{\rho}^3}\bigg)
                    - \frac{(w\cdot v)w -  (u\cdot v)u}{|w|_{\rho}^3}
                % = \, & \frac{|w|_{\rho}^3(u\cdot v)u - |u|_{\rho}^3(w\cdot v)w}{|u|_{\rho}^3 |w|_{\rho}^3} \\
                % = \, & \frac{\big(|w|_{\rho}^3 - |u|_{\rho}^3 \big)(u\cdot v)u }{|u|_{\rho}^3 |w|_{\rho}^3} - \frac{ |u|_{\rho}^3\big[(w\cdot v)w - (u\cdot v)u\big]}{|u|_{\rho}^3 |w|_{\rho}^3} 
                =: T_{2,1} - T_{2,2}. 
            \end{aligned}
        \end{equation*}
        Consider the term \(T_{2,1}\), by \eqref{eq-lem:aux 0 0} and \eqref{eq-lem:aux 0 1},
        \begin{align*}
             T_{2,1} = \, & \frac{\big(|w|_{\rho} - |u|_{\rho} \big)\big(|w|_{\rho}^2 + |w|_{\rho}|u|_{\rho} + |u|_{\rho}^2\big)(u\cdot v)u}{|u|_{\rho}^3 |w|_{\rho}^3} \\
             \leq \, & \bigg|\frac{\big(|w|_{\rho}^2 + |w|_{\rho}|u|_{\rho} + |u|_{\rho}^2\big)|u|^2}{|u|_{\rho}^3 |w|_{\rho}^3}\bigg||w - u| |v| \\
             \leq \, & \bigg|\frac{1}{|u|_{\rho}|w|_{\rho}} + \frac{1}{|w|_{\rho}^2} + \frac{|u|}{|w|_{\rho}^3}\bigg||w - u| |v| \\
             \leq \, & \bigg|\frac{1}{\rho^2 } + \frac{|u|}{\rho^3}\bigg||w - u| |v|;
        \end{align*}
        now consider the term \(T_{2,2}\),
        \begin{align*}
            T_{2,2} = \, &  \frac{(w\cdot v) (w-u) - \big[(u - w)\cdot v\big]u }{ |w|_{\rho}^3} \\
            \leq \, & \frac{|w|}{|w|_{\rho}^3} |w - u||v| + \frac{|u|}{|w|_{\rho}^3}|w - u||v| \\
            \leq \, & \frac{1}{\rho^2} |w - u||v| + \frac{|u|}{\rho^3}|w - u||v|.
        \end{align*} 
        Collecting the estimates on \(T_1\) and \(T_2\), we arrive at
        \begin{equation*}
                 \big| DF_{\rho}(u)[v] - DF_{\rho}(w)[v] \big| \leq \bigg(\frac{1}{\rho^2 } + \frac{|u|}{\rho^3}\bigg) |w - u||v|. \qedhere
        \end{equation*}
    \end{proof}
    % \begin{lemma}%\label{lemma:auxiliary 1}
    %     For any \(u, v \in \mathbb{R}^2\), 
    %     \begin{align*}
    %         \big(F_{\rho}(u) - F_{\rho}(v) \big)\cdot (u - v) \geq \left(1 - \frac{|v|}{\sqrt{\rho^2 + |v|^2}}\right) \frac{|u - v|^2}{\sqrt{\rho^2 + |u|^2}}.
    %     \end{align*}
    % \end{lemma}
    % \begin{proof}
    %     We start from the left-hand side
    %     \begin{align*}
    %         & \left(\frac{u}{\sqrt{\rho^2 + |u|^2}} - \frac{v}{\sqrt{\rho^2 + |v|^2}}\right) \cdot (u - v) \\
    %         = \,&  \frac{(u - v)^2}{\sqrt{\rho^2 + |u|^2}} + \left(\frac{1}{\sqrt{\rho^2 + |u|^2}} - \frac{1}{\sqrt{\rho^2 + |v|^2}}\right) v \cdot (u - v) \\
    %         = \,& \frac{(u - v)^2}{\sqrt{\rho^2 + |u|^2}} + \frac{v^2 - u^2 }{\sqrt{\rho^2 + |u|^2} \sqrt{\rho^2 + |v|^2} (\sqrt{\rho^2 + |u|^2} + \sqrt{\rho^2 + |v|^2})}v \cdot (u - v) \\
    %         = \,& \frac{(u - v)^2}{\sqrt{\rho^2 + |u|^2}} -  \frac{(u - v)^2}{\sqrt{\rho^2 + |u|^2}}  \frac{u + v }{\sqrt{\rho^2 + |u|^2} + \sqrt{\rho^2 + |v|^2}} \cdot \frac{v }{ \sqrt{\rho^2 + |v|^2} } \\
    %         = \,& \left(1 - \frac{u + v }{\sqrt{\rho^2 + |u|^2} + \sqrt{\rho^2 + |v|^2}} \cdot \frac{v }{ \sqrt{\rho^2 + |v|^2} } \right)\frac{(u - v)^2}{\sqrt{\rho^2 + |u|^2}} \\
    %         \geq \, & \left(1 -  \frac{ |v| }{ \sqrt{\rho^2 + |v|^2} } \right)\frac{(u - v)^2}{\sqrt{\rho^2 + |u|^2}}.
    %     \end{align*}
    %     where the inequality holds by 
    %     \begin{equation*}
    %         \frac{| u + v | }{\sqrt{\rho^2 + |u|^2} + \sqrt{\rho^2 + |v|^2}} \leq 1.  \qedhere
    %     \end{equation*}
    % \end{proof}  

\begin{lemma}\label{lem:S zero mean curvature}
    Surface  
    \[
    S(x,y) = 0.25 + \ln\bigg( \frac{\cos(y - 0.5)}{\cos(x - 0.5)}\bigg) \quad (x,y) \in (0,1)^2
    \]
    has zero mean curvature everywhere on \((0,1)^2\).
\end{lemma}
\begin{proof}
    To show that \(S\) has zero mean curvature,
    it suffices to show that 
    \begin{equation}\label{eq:S mean curvature}
        (1 +  S_y^2)  S_{xx} - 2  S_x S_y S_{xy} + (1 + S_x^2) S_{yy} = 0.
    \end{equation}
    Taking the derivatives of \(S\) with respect to \(x\) and \(y\) yields 
    \(\partial_x S = \tan(x - 0.5)\), 
    \(\partial_{xx} S = \sec^2(x - 0.5)\), 
    \(\partial_y S = -\tan(y - 0.5)\), 
    \(\partial_{yy} S = -\sec^2(y-0.5)\), 
    and \(\partial_{xy} S = 0.\) 
    Plugging these into the left-hand side of \eqref{eq:S mean curvature} and recalling the identity \(1 + \tan^2(\theta) = \sec^2(\theta)\)
    finishes the proof.
\end{proof}

\section{Glossary}

We list here the main notations used in the article, with a reference to their definition.

\medskip

\begin{center}
\begin{tabular}{c|l|l}
  \toprule
  Notation & Description & Defined or appearing in\\
  \midrule
  $|{\cdot}|_{\rho}$   & Regularized Euclidean length & \eqref{eq:def weighted rho norm}\\
  $\mathcal{D}_T$ & Space-time gradient discretisation (GD) & Definition \ref{def:GD definition}\\
  $X_{\mathcal{D}}$ & Space of $\mathcal D_T$ & Definition \ref{def:GD definition}\\
  $\|{\cdot}\|_{\mathcal{D}}$ & Norm on $X_{\mathcal D}$ & \eqref{eq-def:GD definition norm}\\
  $\Pi_{\mathcal{D}}$ & Function reconstruction & Definition \ref{def:GD definition}\\
  $\nabla_{\mathcal{D}}$ & Gradient reconstruction & Definition \ref{def:GD definition}\\
  $S_{\mathcal{D}}$ & Measure of consistency of the GD & Definition \ref{def:GD consistency}\\
  $W_{\mathcal{D}}$ & Measure of limit-conformity of the GD & Definition \ref{def:GD limit conformity}\\
  $h_{\mathcal{D}}$ & Space size of the GD & Definition \ref{def:GD space size}\\
  $e_{\mathcal{D}}^{ini}$ & Consistency error of the initial condition & \eqref{def:eDini}\\
  $\Lambda^{(1)}$ & Constant $<1$ & \eqref{eq-proof-lem::NL interpolation error estimate constant 1}\\
  $\Lambda^{(2)}$ & Free positive constant & Lemma \ref{lem:GS consistency} \\
  $\Lambda^{(3)}$ & Constant $\gtrsim 1$ & \eqref{eq:def.lambda3}\\
  $\Lambda^{(4)}$ & Constant & \eqref{def:lambda4}\\
  \bottomrule
\end{tabular}
\end{center}

\end{appendices}

\end{document}